\newtheorem{prethm}{{\bf Theorem}}[section]
\newenvironment{thm}{\begin{prethm}{\hspace{-0.5
em}{\bf.}}}{\end{prethm}}
\newtheorem{prepro}{{\bf Theorem}}
\newtheorem{precor}[prethm]{{\bf Corollary}}
\newenvironment{cor}{\begin{precor}{\hspace{-0.5
em}{\bf.}}}{\end{precor}}
\newtheorem{preconj}[prethm]{{\bf Conjecture}}
\newtheorem{preremark}[prethm]{{\bf Remark}}
\newenvironment{remark}{\begin{preremark}\em{\hspace{-0.5
em}{\bf.}}}{\end{preremark}}
\newtheorem{prelem}[prethm]{{\bf Lemma}}
\newenvironment{lem}{\begin{prelem}{\hspace{-0.5
em}{\bf.}}}{\end{prelem}}
\newtheorem{preque}[prethm]{{\bf Question}}
\newtheorem{preobserv}[prethm]{{\bf Observation}}
\newtheorem{predef}[prethm]{{\bf Definition}}
\newtheorem{preproposition}[prethm]{{\bf Proposition}}
\newtheorem{preproof}{{\bf Proof.}}
\newtheorem{preprooff}{{\bf Proof}}
\newenvironment{proof}[1]{\begin{preproof}{\rm
#1}\hfill{$\Box$}}{\end{preproof}}
\newtheorem{preproofs}{{\bf The second proof of }}
\newtheorem{preprooft}{{\bf Third proof of }}
\newtheorem{preproofF}{{\bf Proof of}}
\title{\bf\Large 
Highly tree-connected complementary modulo factors with bounded degrees
}
\author{{\normalsize{\sc Morteza Hasanvand${}$} }\vspace{3mm}
\\{\footnotesize{${}$\it Department of Mathematical
 Sciences, Sharif
University of Technology, Tehran, Iran}}
{\footnotesize{}}\\{\footnotesize{ $\mathsf{morteza.hasanvand@alum.sharif.edu }$ }}}
\date{}
\begin{document}
\maketitle
\begin{abstract}{
Let $G$ be a bipartite graph with bipartition $(X,Y)$, let $k$ be a positive integer, and let $f:V(G)\rightarrow Z_k$ be a mapping with $\sum_{v\in X}f(v) \stackrel{k}{\equiv}\sum_{v\in Y}f(v)$. In this paper, we show that if $G$ is $(2m+2m_0+4k-4)$-edge-connected and $m+m_0>0$, then $G$ has an $m$-tree-connected factor $H$ such that its complement is $m_0$-tree-connected and for each vertex $v$, $d_H(v)\stackrel{k}{\equiv} f(v)$, and $$\lfloor\frac{d_G(v)}{2}\rfloor-(k-1)-m_0\le d_{H}(v)\le \lceil\frac{d_G(v)}{2}\rceil+k-1+m.$$ Next, we generalize this result to general graphs and derive a sufficient degree condition for a highly edge-connected general graph $G$ to have a connected factor $H$ such that for each vertex $v$, $d_H(v)\in \{f(v),f(v)+k\}$. Finally, we show that every $(4k-2)$-tree-connected graph admits a bipartite connected  factor whose degrees are divisible by $k$.
\\
\\
\noindent {\small {\it Keywords}: Modulo factor;  spanning Eulerian; partition-connected; connected factor; vertex degree; regular graph. }} {\small
}
\end{abstract}
%
%
%
%
%
%
%
%
%
%
\section{Introduction}
In this article, graphs may have loops and multiple edges.
Let $G$ be a graph. The vertex set, the edge set, and the maximum degree of vertices of $G$ are denoted by $V(G)$, $E(G)$, and $\Delta(G)$, respectively.
We denote by $d_G(v)$ the degree of a vertex $v$ in the graph $G$.
If $G$ has an orientation, the out-degree and in-degree of $v$ are denoted by $d_G^+(v)$ and $d_G^-(v)$.
For a vertex set $A$ of $G$ with at least two vertices, the number of edges of $G$ with exactly one end in $A$ is denoted by $d_G(A)$.
Also, we denote by $e_G(A)$ the number of edges with both ends in $A$.
We denote by $G[A]$ the induced subgraph of $G$ with the vertex set $A$ containing
precisely those edges of $G$ whose ends lie in $A$, and denote by $G[A, B]$ the induced bipartite factor of $G$
with the bipartition $(A, B)$.
A graph $G$ is called 
{\it $m$-tree-connected}, if it contains $m$ edge-disjoint spanning trees. 
Note that by the result of Nash-Williams~\cite{Nash-Williams-1961} and Tutte~\cite{Tutte-1961} every $2m$-edge-connected graph is $m$-tree-connected. 
A graph $G$ is said to be 
{\it $(m,l_0)$-partition-connected},
 if it can be decomposed into an $m$-tree-connected factor and a factor $F$ which admits an orientation such that for each vertex $v$, $d^+_F(v)\ge l_0(v)$, where $l_0$ is an integer-valued function on $V(G)$. 
The {\it bipartite index $bi(G)$} of a graph $G$ is the smallest number of all $|E(G)\setminus E(H)|$ taken over all bipartite factors~$H$. 
An {\it $f$-factor} refers to a spanning subgraph $H$ such that for each vertex $v$, 
$d_H(v)\stackrel{k}{\equiv}f(v) $, where $f:V(G)\rightarrow Z_k$ and $Z_k$ denotes the cyclic group of order $k$.
For a graph $G$, we say that a mapping $f:V(G)\rightarrow Z_k$ is 
{\it compatible with $G$} with respect to a bipartition $X,Y$ of $V(G)$, 
if $\sum_{v\in X} f(v)-2x\stackrel{k}{\equiv} \sum_{v\in Y}f(v)$ for an integer $x$ with $0\le x\le e_G(X)$ 
or $\sum_{v\in X} f(v)\stackrel{k}{\equiv} \sum_{v\in Y}f(v)-2y$ for an integer $y$ with $0\le y\le e_G(Y)$. 
Likewise, we say that a mapping $f$ is 
{\it compatible with $G$}, if it is compatible with $G$ with respect to every bipartition $X,Y$ of $V(G)$.
When $G$ is bipartite and $(2k-1)$-edge-connected, $f$ is compatible with $G$ if and only if $f$ is compatible with $G$
with respect to the unique bipartition $(X,Y)$ of $G$~\cite{ModuloFactorBounded}.
Also, it is easy to see that $f$ is compatible with $G$, if $G$ has an $f$-factor or $bi(G)\ge k-1$ and $(k-1)\sum_{v\in V(G)}f(v)$ is even, see~\cite{ModuloFactorBounded}.
Note that when $f$ is compatible with $G$ and $k$ is even, we must have $\sum_{v\in V(G)} f(v)\stackrel{2}{\equiv} 0$.
A {\it modulo $k$-regular} graph refers to a graph whose degrees are positive and divisible by $k$. 
For a graph $G$ with a vertex $z$, we denote by $\chi_z$ the mapping $\chi_z:V(G)\rightarrow \{0,1\}$
such that $\chi_z(z)=1$ and $\chi_z(v)=0$ for all vertices $v$ with $v\neq z$.
Moreover, we denote by $\bar{\chi}_z$ the mapping $1-\chi_z$.
Throughout this article, all variables $m$ are nonnegative integers and all variables $k$ are positive integers.
%

%

In 1979 Jaeger~\cite{Jaeger-1979} established the first result on connected partity factors by proving that 
 every $4$-edge-connected graph admits a spanning Eulerian subgraph. 
In this paper, we improve this result to the following version by imposing a bound on vertex degrees.
\begin{thm}\label{Intro:thm:Eulerian}
{Every $4$-edge-connected graph $G$ has a spanning Eulerian subgraph $H$ such that for each vertex~$v$,
$$\lfloor \frac{d_{G}(v)}{2}\rfloor-1\le d_H(v)\le \lceil \frac{d_{G}(v)}{2}\rceil+2.$$
}\end{thm}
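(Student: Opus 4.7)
The plan is to derive the theorem as the specialization, to $k = 2$, $m = 1$, $m_0 = 0$, $f \equiv 0$, of the general graph generalization of the bipartite result announced in the abstract. A $1$-tree-connected factor is exactly a connected spanning subgraph, and the condition $d_H(v) \equiv 0 \pmod 2$ at every vertex forces all degrees of $H$ to be even; together these say that $H$ is spanning Eulerian. Substituting these parameters into the degree bound
$$\lfloor d_G(v)/2\rfloor - (k-1) - m_0 \le d_H(v) \le \lceil d_G(v)/2\rceil + k - 1 + m$$
yields exactly $\lfloor d_G(v)/2\rfloor - 1 \le d_H(v) \le \lceil d_G(v)/2\rceil + 2$, as required.

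Compatibility of $f$ is free of charge here: for every bipartition $(X,Y)$ of $V(G)$, taking $x = 0$ gives $\sum_{v \in X} f(v) - 2x = 0 \equiv 0 = \sum_{v \in Y} f(v) \pmod 2$, so the zero function is compatible with every graph. Thus nothing needs to be checked beyond the edge-connectivity hypothesis and the existence of the general-graph analogue.

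The main obstacle is precisely the edge-connectivity hypothesis. A naive substitution into the bipartite bound $2m + 2m_0 + 4k - 4$ demands $6$-edge-connectivity, whereas the theorem assumes only $4$. I expect this gap of $2$ to be closed by the generalization to general graphs itself, because in the parity case $k = 2$ with $f \equiv 0$ the compatibility discrepancy across any bipartition vanishes identically, and one should be able to shave $4k - 4$ down to $4k - 6$ in the edge-connectivity count. If the generalization is not already sharp enough to produce this saving, the fallback is a direct splitting-off argument: take a minimal counterexample $G$, apply a Lov\'asz/Mader-type admissible splitting at a vertex of minimum degree that preserves $4$-edge-connectivity, note that the $\pm 1$ and $+2$ slack in the stated degree bounds exactly absorbs the parity perturbation caused by one splitting step, and invoke induction on $|E(G)|$.

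As an alternative independent route that I would keep in reserve, one can start from two edge-disjoint spanning trees $T_1, T_2$ guaranteed by Nash-Williams/Tutte in a $4$-edge-connected graph, take a balanced orientation of $G \setminus T_1$ with out-degree $\lfloor d/2\rfloor$ or $\lceil d/2\rceil$ at every vertex, and then correct parities by routing a $T$-join inside $T_2$ where $T$ is the current set of odd-degree vertices; the tree $T_2$ provides the required connectivity for the final $H$, and the per-vertex cost of the $T$-join correction is at most $1$ on the lower side and $2$ on the upper side, matching the stated slack.
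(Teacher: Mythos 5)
Your choice of specialization is the right one, and your observation that the zero mapping is automatically compatible is correct: the paper indeed obtains this theorem as the $f\equiv 0$, $m=1$, $m_0=0$ case of Corollary~\ref{cor:k=2:tree-connected}. But the entire content of the statement lies in the step you leave unproved. Substituting $k=2$ into the general bounds gives $2m+2m_0+4k-4=6$ for bipartite graphs and $2m+6k-5=9$ (tree-connectivity) for general graphs, and you only \emph{expect} that ``one should be able to shave $4k-4$ down to $4k-6$'' in the parity case. The paper does achieve exactly this, but not by any self-strengthening of the general-$k$ machinery: it runs a separate modulo-$2$ pipeline whose engine is Theorem~\ref{thm:2:partition} (quoted from~\cite{ModuloFactorBounded}), which produces an $f$-factor modulo $2$ with two-sided degree control already under $(1,l_0)$-partition-connectivity, as opposed to the $(2k-2,l_0)$-partition-connectivity demanded by Theorem~\ref{thm:bi:k:partition}. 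Combining that with Theorem~\ref{thm:basic:M} yields Theorem~\ref{thm:modulo2:main} at $(2m+2m_0+2)$-edge-connectivity, hence $4$ for $m=1$, $m_0=0$. You supply no substitute for this ingredient, so the reduction from $6$ to $4$ is a genuine gap, not a routine specialization.

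Your two fallbacks do not close it. In the $T$-join route, the parity-correcting $T$-join inside a spanning tree is the (unique) edge set obtained from the paths joining the odd vertices, and its degree at a single vertex can be as large as $n-1$ (take the tree to be a star and the odd set to be its leaves); so the claim that the correction costs at most $1$ on the lower side and $2$ on the upper side per vertex is false, and the resulting degrees can be pushed arbitrarily far from $d_G(v)/2$. There is also no mechanism described for extracting a subgraph with degrees $\lfloor d/2\rfloor$ or $\lceil d/2\rceil$ from a balanced orientation. The splitting-off induction is likewise only a slogan: you would have to track how lifting the Eulerian subgraph back through a split affects the degree at the split vertex and at its neighbours relative to their (changed) degrees in $G$, and no such bookkeeping is offered. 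As it stands, the proposal correctly predicts the shape of the argument but is missing the one lemma that makes the theorem true at edge-connectivity $4$.
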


In 2014 Thomassen~\cite{Thomassen-2014} introduced modulo factors and formulated the following theorem.
Recently, the present author~\cite{ModuloFactorBounded} established a strengthened version for his result by giving a sharp bound on degrees.
%
\begin{thm}{\rm(\cite{ModuloFactorBounded,Thomassen-2014})}\label{Inro:factor:Zk}
{Let $G$ be a bipartite graph, let $k$ be a positive integer, and let $f:V(G)\rightarrow Z_k$ be a compatible mapping. 
If $G$ is $(3k-3)$-edge-connected, then it admits an $f$-factor
$H$ such that for each vertex~$v$,
$$\lfloor \frac{d_G(v)}{2} \rfloor -(k-1) \le d_H(v) \le \lceil \frac{d_G(v)}{2}\rceil+(k-1).$$
}\end{thm}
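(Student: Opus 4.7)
The plan is to convert the factor problem into an orientation problem, then produce an orientation that is simultaneously close to balanced (out-degree near $d_G(v)/2$) and correct modulo $k$.

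Let $(X,Y)$ be the bipartition of $G$. For each spanning subgraph $H$, orient the edges of $H$ from $X$ to $Y$ and the edges of $G\setminus E(H)$ from $Y$ to $X$. Then $d_H(v)=d^+(v)$ for $v\in X$ and $d_H(v)=d_G(v)-d^+(v)$ for $v\in Y$. Introduce $g:V(G)\to\mathbb{Z}_k$ by $g(v)=f(v)$ on $X$ and $g(v)=d_G(v)-f(v)$ on $Y$. The congruence $d_H(v)\equiv f(v)\pmod{k}$ becomes $d^+(v)\equiv g(v)\pmod{k}$, and the sharp degree bound becomes
\[
\lfloor d_G(v)/2\rfloor-(k-1)\;\le\; d^+(v)\;\le\;\lceil d_G(v)/2\rceil+(k-1)
\]
for every vertex $v$. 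Since $\sum_{v\in Y}d_G(v)=|E(G)|$ in a bipartite graph, the compatibility of $f$ translates to $\sum_v g(v)\equiv|E(G)|\pmod{k}$, which is the expected necessary condition for an orientation with prescribed out-degrees modulo $k$ to exist.

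Thus the problem reduces to showing that every $(3k-3)$-edge-connected graph admits an orientation $D$ with $d^+_D(v)\equiv g(v)\pmod k$ satisfying the balanced out-degree bound above. I would first invoke the Lov\'asz--Thomassen--Wu--Zhang theorem (whose edge-connectivity threshold is exactly $3k-3$) to obtain some mod-$k$ orientation $D_0$. Among all orientations lying in the same ``mod-$k$ class'' as $D_0$, select one $D$ minimizing $\Phi(D)=\sum_v\bigl(d^+_D(v)-d_G(v)/2\bigr)^2$. If some vertex $u$ violated the upper bound, the identity $\sum_v d^+(v)=|E(G)|$ would force another vertex $w$ to violate the lower bound by a matching amount. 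Reversing $k$ edge-disjoint directed $u\to w$ paths in $D$ shifts $d^+(u)$ and $d^+(w)$ each by exactly $k$ (and leaves all other out-degrees unchanged), preserving every mod-$k$ congruence while strictly decreasing $\Phi$, contradicting minimality.

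The main obstacle is guaranteeing $k$ edge-disjoint directed $u\to w$ paths inside $D$, and this is exactly where the $(3k-3)$-edge-connectivity is spent: $2(k-1)$ edges across each cut secure a $(k-1)$-arc-connected orientation via Nash--Williams, while the extra $k-1$ edges of slack provide the mod-$k$ adjustment room needed to keep $D$ in the prescribed congruence class without losing the arc-connectivity needed for the path-reversal exchange. A more combinatorial variant of this plan would avoid the exchange argument altogether and proceed by induction on $|E(G)|$ via splitting-off at a high-degree vertex, using the $(3k-3)$-edge-connectivity to split a pair of edges that preserves both the connectivity and the mod-$k$ prescription, and then applying the inductive hypothesis to the smaller bipartite graph.
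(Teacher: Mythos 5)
First, note that the paper does not prove this theorem: it is quoted from Thomassen's paper on graph factors modulo $k$ and from the companion paper \cite{ModuloFactorBounded}, which supplies the degree bounds; so your proposal has to be judged on its own. Your opening reduction is correct and is exactly Thomassen's: orienting $E(H)$ from $X$ to $Y$ and the remaining edges from $Y$ to $X$ turns the problem into finding an orientation with $d^+(v)\equiv g(v)\pmod k$ and $|d^+(v)-d_G(v)/2|\le k-\frac{1}{2}$, and compatibility becomes $\sum_v g(v)\equiv |E(G)|\pmod k$; the Lov\'asz--Thomassen--Wu--Zhang theorem then yields some orientation $D_0$ in the right congruence class. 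Up to this point you have reproved the existence of the $f$-factor, which is Thomassen's theorem without the degree bounds.

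The degree bounds are the actual added content, and there your argument has a genuine gap. The exchange step requires $k$ edge-disjoint \emph{directed} $u\to w$ paths in the particular orientation $D$ minimizing $\Phi$ within the mod-$k$ class. Nash--Williams' orientation theorem says that a $2(k-1)$-edge-connected graph admits \emph{some} $(k-1)$-arc-connected orientation; it says nothing about this specific $D$, and a $\Phi$-minimizer in a prescribed congruence class may have no directed $u\to w$ path at all. If instead you restrict the minimization to $(k-1)$-arc-connected orientations, you must first show the congruence class contains one (a nontrivial strengthening of LTWZ) and then that the path reversals do not leave that set; neither is addressed. This is precisely where the difficulty of the theorem lives: the achievable out-degree sequences are cut out by Hakimi-type inequalities $\sum_{v\in A}t(v)\ge e_G(A)$, and the fact that the coset $t\equiv g\pmod k$ meets this polytope near its balanced center does not follow formally from LTWZ plus Nash--Williams. (A smaller slip: a vertex $u$ violating the upper bound does not force a single $w$ violating the lower bound ``by a matching amount''; only some $w$ with $d^+(w)<d_G(w)/2$ is guaranteed --- which happens to suffice for $\Phi$ not to increase --- and you must also handle the tie case $d^+(u)-d_G(u)/2=k-\frac{1}{2}$, $d^+(w)-d_G(w)/2=-\frac{1}{2}$, where $\Phi$ does not strictly decrease.) The splitting-off alternative is only a gesture and carries the same burden. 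The cited proof in \cite{ModuloFactorBounded} instead passes through a $(2k-2,l_0)$-partition-connected decomposition (compare Theorem~\ref{thm:bi:k:partition}), using a tree-connected factor for the modular correction and an orientable factor to control the degrees.
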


In this paper, we develop Theorem~\ref{Inro:factor:Zk} to the following 
 tree-connected version.
\begin{thm}\label{Intro:thm:bi}
{Let $G$ be a bipartite graph, let $k$ be a positive integer, and let $f:V(G)\rightarrow Z_k$ be a compatible mapping. 
If $G$ is $(2m+4k-4)$-edge-connected, then it has
an $m$-tree-connected $f$-factor $H$ such that for each vertex $v$,
$$\lfloor \frac{d_G(v)}{2} \rfloor -(k-1) \le d_H(v) \le \lceil \frac{d_G(v)}{2}\rceil+(k-1)+m.$$
}\end{thm}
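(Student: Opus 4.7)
The plan is to reduce Theorem~\ref{Intro:thm:bi} to Theorem~\ref{Inro:factor:Zk} by peeling off a low-degree $m$-tree-connected skeleton of $G$ and applying the earlier theorem to what remains.

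The central step is a decomposition claim. Since $G$ is $(2m+4k-4)$-edge-connected, I would first establish that $G$ admits an edge-decomposition $G=T\cup F$ into bipartite factors, where $T$ is $m$-tree-connected with $d_T(v)\le 2m$ for every vertex $v$, and $F$ is $(4k-4)$-edge-connected. Raw tree-packing (Nash-Williams/Tutte) provides $(m+2k-2)$ edge-disjoint spanning trees but controls neither the vertex degrees in the $m$-tree-connected part nor the residual cut sizes after its removal; what is needed is a degree-bounded variant of complementary tree-connectedness, obtained either by direct combinatorial adjustment (local exchanges or splitting-off at high-degree vertices) or by invoking an auxiliary result on partition-connectedness from the author's prior work. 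This is the principal obstacle.

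Granting the decomposition, I would define $f':V(G)\to Z_k$ by $f'(v)=f(v)-d_T(v)\pmod k$. Since $T$ is bipartite with the same bipartition $(X,Y)$ as $G$, $\sum_{v\in X}d_T(v)=|E(T)|=\sum_{v\in Y}d_T(v)$, so $\sum_{v\in X}f'(v)\equiv\sum_{v\in Y}f'(v)\pmod k$, which shows $f'$ is compatible with $F$. As $F$ is $(4k-4)$-edge-connected and hence $(3k-3)$-edge-connected, Theorem~\ref{Inro:factor:Zk} supplies an $f'$-factor $H'$ of $F$ satisfying
\[
\left\lfloor\frac{d_F(v)}{2}\right\rfloor-(k-1)\le d_{H'}(v)\le\left\lceil\frac{d_F(v)}{2}\right\rceil+(k-1).
\]

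Finally, I would take $H=T\cup H'$. Then $d_H(v)=d_T(v)+d_{H'}(v)\equiv f(v)\pmod k$, so $H$ is an $f$-factor, and $H\supseteq T$ is $m$-tree-connected. Using $d_F(v)=d_G(v)-d_T(v)$ together with $d_T(v)\le 2m$, the upper bound reads
\[
d_H(v)\le d_T(v)+\left\lceil\frac{d_G(v)-d_T(v)}{2}\right\rceil+(k-1)=\left\lceil\frac{d_G(v)+d_T(v)}{2}\right\rceil+(k-1)\le\left\lceil\frac{d_G(v)}{2}\right\rceil+m+(k-1),
\]
and the lower bound follows similarly from $d_T(v)\ge 0$:
\[
d_H(v)\ge\left\lfloor\frac{d_G(v)+d_T(v)}{2}\right\rfloor-(k-1)\ge\left\lfloor\frac{d_G(v)}{2}\right\rfloor-(k-1).
\]
This matches the claimed inequalities, completing the plan modulo the decomposition lemma.
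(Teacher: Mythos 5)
Your reduction stands or falls with the decomposition $G=T\cup F$ in which $T$ is an $m$-tree-connected factor with $d_T(v)\le 2m$ for every vertex and $F$ is $(4k-4)$-edge-connected. You flag this as the principal obstacle, but for $m\ge 1$ it is not an obstacle to be overcome: such a $T$ simply need not exist, at any level of edge-connectivity. Take the bipartite multigraph with parts $X=\{u_1,u_2\}$ and $Y=\{v_1,\dots,v_n\}$ and $m+2k-2$ parallel edges between each $u_i$ and each $v_j$. This graph is $(2m+4k-4)$-edge-connected, yet every connected spanning subgraph has at least $n+1$ edges, all incident with $u_1$ or $u_2$, so $\max\{d_T(u_1),d_T(u_2)\}\ge (n+1)/2>2m$ once $n>4m$. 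The bound $d_T(v)\le 2m$ is exactly what turns the upper bound $\lceil d_F(v)/2\rceil+(k-1)$ coming from Theorem~\ref{Inro:factor:Zk} into $\lceil d_G(v)/2\rceil+(k-1)+m$, so this failure is fatal to the whole argument, not merely to the auxiliary lemma. There is a second, independent gap: deleting an $m$-tree-connected factor from a $(2m+4k-4)$-edge-connected graph is only guaranteed, by tree packing, to leave a $(2k-2)$-tree-connected and hence $(2k-2)$-edge-connected remainder, which for $k\ge 2$ does not even meet the $(3k-3)$-edge-connectivity hypothesis of Theorem~\ref{Inro:factor:Zk}, let alone the $(4k-4)$ you ask for.

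The paper escapes both problems by never bounding the degrees of the tree-connected part. Theorem~\ref{Intro:thm:bi} is the case $m_0=0$ of Theorem~\ref{thm:modulo:bi:first}: Theorem~\ref{thm:basic:M} shows $G$ is $(m+2k-2,l_0)$-partition-connected with $l_0(v)=\lfloor d_G(v)/2\rfloor-m-(2k-2)$; one splits $G$ into an $m$-tree-connected factor $F$ and a $(2k-2,l_0)$-partition-connected remainder; and then Theorem~\ref{thm:bi:k:partition} --- a version of the modulo-$k$ factor theorem whose degree window $[s(v),\,d(v)-s_0(v)]$ is an arbitrary one governed by $l_0$, rather than a window centred at half the degree --- is applied to $G\setminus E(F)$ with the window shifted by $d_F(v)$ (Corollary~\ref{cor:modulo-k:partition-connected} and Theorem~\ref{thm:bipartite:partition-connected}). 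Because the window is prescribed relative to $d_G(v)$ from the outset, no control of $d_F(v)$ is ever required. To salvage your outline you would need to replace Theorem~\ref{Inro:factor:Zk} by such a windowed version; with the fixed half-degree window alone the reduction cannot work.
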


In Section~\ref{sec:general-graphs}, we will also generalize Theorem~\ref{Intro:thm:bi} to general graphs by proving the following theorem; the special case $m=0$ of this result was investigated in~\cite{ModuloFactorBounded,Thomassen-Wu-Zhang-2016} for even and odd $k$. Moreover, we develop both of Theorems~\ref{Intro:thm:bi} and~\ref{Intro:thm:non-bi} to a decomposition version as mentioned in the abstract.
\begin{thm}\label{Intro:thm:non-bi}
{Let $G$ be a graph, let $k$ be a positive integer, and let $f:V(G)\rightarrow Z_k$ be a compatible mapping. 
If $G$ is $(2m+6k-5)$-tree-connected and $m>0$, then it has
an $m$-tree-connected $f$-factor $H$ such that for each vertex~$v$,
$$\lfloor \frac{d_G(v)}{2} \rfloor -(k-1) \le d_H(v) \le \lceil \frac{d_G(v)}{2}\rceil+(k-1)+m.$$
}\end{thm}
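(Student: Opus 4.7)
The plan is to reduce the theorem, for $m \ge 1$, to the already-known $m=0$ case proved in~\cite{ModuloFactorBounded,Thomassen-Wu-Zhang-2016}, by peeling off a low-degree $m$-tree-connected ``skeleton'' $T$ of $G$ and applying the $m=0$ result to the residue $R := G\setminus T$.

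First I would exploit the $(2m+6k-5)$-tree-connectivity of $G$ to obtain an edge-disjoint decomposition $G = T \cup R$ such that $T$ is $m$-tree-connected with $d_T(v) \le 2m$ at every vertex~$v$, and $R$ is $(6k-5)$-tree-connected. An arbitrary collection of $m$ edge-disjoint spanning trees already forces $d_T(v) \ge m$; the upper bound $d_T(v) \le 2m$ would be extracted by an ``edge-minimal'' selection of $T$ combined with local exchange arguments that preserve both the $m$-tree-connectivity of $T$ and the $(6k-5)$-tree-connectivity of $R$, working within the $(m,l_0)$-partition-connected framework recalled in the introduction. Next I would define $f' : V(R)\to Z_k$ by $f'(v)\equiv f(v) - d_T(v)\pmod k$, and verify that $f'$ is compatible with $R$; this follows from the compatibility of $f$ with $G$ by a bookkeeping computation tracking how edges of $T$ shift the relevant bipartition-sums modulo~$k$, with the high tree-connectivity of $R$ absorbing any correction by a multiple of~$k$.

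Applying the $m=0$ case of Theorem~\ref{Intro:thm:non-bi} to $R$ with $f'$ then yields a connected $f'$-factor $H'$ of $R$ satisfying $\lfloor d_R(v)/2\rfloor - (k-1) \le d_{H'}(v) \le \lceil d_R(v)/2\rceil + (k-1)$. Setting $H := T \cup H'$, the factor $H$ contains the $m$-tree-connected $T$ and is therefore $m$-tree-connected; it is an $f$-factor since $d_H(v) = d_T(v) + d_{H'}(v) \equiv d_T(v) + f'(v) \equiv f(v) \pmod k$; and substituting $d_R(v) = d_G(v) - d_T(v)$ together with $0 \le d_T(v) \le 2m$ gives precisely $\lfloor d_G(v)/2\rfloor - (k-1) \le d_H(v) \le \lceil d_G(v)/2\rceil + (k-1) + m$, as claimed.

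The main obstacle is the first step: producing the decomposition with the sharp upper bound $d_T(v) \le 2m$. An arbitrary choice of $m$ spanning trees may over-concentrate their edges at some vertex, and rebalancing via swaps while keeping the residue $(6k-5)$-tree-connected is the crux of the argument. This is exactly where the additional $2m$ edges in the tree-connectivity hypothesis of $G$, beyond what the $m=0$ case requires, are spent.
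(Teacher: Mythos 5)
Your reduction has a fatal first step: the decomposition $G=T\cup R$ with $T$ an $m$-tree-connected factor satisfying $d_T(v)\le 2m$ for every vertex $v$ does not exist in general, and no amount of edge-swapping can produce it. Take $G$ to be the multigraph on vertices $u,v_1,\dots,v_n$ with exactly $2m+6k-5$ parallel edges between $u$ and each $v_i$ (the paper explicitly allows multiple edges). This $G$ is $(2m+6k-5)$-tree-connected, yet every connected spanning subgraph --- hence every $m$-tree-connected factor $T$, since $m\ge 1$ --- must use at least one edge from each parallel class and therefore has $d_T(u)\ge n$. For $n>2m$ your skeleton cannot exist, and since your upper-bound computation $d_H(v)\le\lceil (d_G(v)+d_T(v))/2\rceil+(k-1)$ genuinely needs $d_T(v)\le 2m$ at every vertex, the argument collapses exactly at the point you yourself identify as the crux. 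You correctly sensed that this step is where the difficulty lies, but it is not merely hard; it is false.

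The paper's proof of Theorem~\ref{thm:higherbi:sharpbound} (of which Theorem~\ref{Intro:thm:non-bi} is the case $m_0=0$) avoids this by never asking the tree-connected piece to have small degree. It uses Corollary~\ref{cor:decomposition:bipartite-index} to split $G$ into a $(2m-1)$-tree-connected \emph{Eulerian} factor $G_1$ and a factor $G_2$ whose bipartite part $G_2[X,Y]$ is $(3k-3)$-tree-connected with $e_{G_2}(X)+e_{G_2}(Y)=\min\{k-1,bi(G)\}$; it then extracts from $G_1$ an $m$-tree-connected $H_1$ with $d_{H_1}(v)\approx d_{G_1}(v)/2$ (Theorem~\ref{thm:bounded:partition}) and from $G_2$ an $f'$-factor $H_2$ with $d_{H_2}(v)\approx d_{G_2}(v)/2$ (Lemma~\ref{lem:low:bi-index}), so the two halves add up to $\approx d_G(v)/2$ even at high-degree vertices. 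A second, lesser gap in your sketch is the compatibility transfer: compatibility is a condition over \emph{all} bipartitions of $V(G)$, and passing from $f$ to $f'(v)=f(v)-d_{H_1}(v)$ is not absorbed by high tree-connectivity of the residue; the paper needs $H_1$ to live inside a factor that is bipartite with respect to the critical bipartition (so that $\sum_{v\in X}d_{H_1}(v)\equiv\sum_{v\in Y}d_{H_1}(v)$) together with Lemma~\ref{lem:low:bi-index:compatible}. Any repair of your approach would have to rebuild both of these mechanisms.
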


In~\cite{Thomassen-2014} Thomassen used Theorem~\ref{Inro:factor:Zk} to concluded that every $(12k-7)$-edge-connected graph of even order has a bipartite modulo $k$-regular factor whose degrees are not divisible by $2k$. This result is improved in \cite{ModuloFactorBounded} for the existence of bipartite modulo $k$-regular factors as the following theorem. 
In Section~\ref{sec:modulo-regular}, we show that every $(4k-2)$-tree-connected graph admits a bipartite connected modulo $k$-regular factor.
\begin{thm}{\rm(\cite{ModuloFactorBounded})}
{Every $(6k-7)$-edge-connected graph admits a bipartite modulo $k$-regular factor.
}\end{thm}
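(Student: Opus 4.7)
The plan is to reduce the statement to Theorem~\ref{Intro:thm:bi} applied with $m=1$ and the zero mapping $f \equiv 0$. The parameter $4k-2$ is exactly the edge-connectivity bound $2m + 4k - 4$ of that theorem when $m=1$, and $f \equiv 0$ is trivially compatible with any bipartite graph since $\sum_X 0 \equiv \sum_Y 0 \pmod{k}$. So if we can locate a bipartite spanning subgraph $G_1 \subseteq G$ that is itself $(4k-2)$-edge-connected, then Theorem~\ref{Intro:thm:bi} produces a connected (i.e. $1$-tree-connected) spanning factor $H$ of $G_1$ with $d_H(v) \equiv 0 \pmod{k}$ for every vertex $v$. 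Connectedness forces $d_H(v) \ge 1$, which combined with divisibility gives $d_H(v) \ge k$, so $H$ has all degrees positive multiples of $k$, i.e., $H$ is modulo $k$-regular. Moreover $H$ is bipartite as a subgraph of $G_1$, completing the deduction.

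The heart of the proof is therefore the following extraction lemma: every $(4k-2)$-tree-connected graph $G$ admits a bipartite spanning subgraph that is $(4k-2)$-edge-connected. My approach is to exploit the $4k-2$ edge-disjoint spanning trees $T_1, \ldots, T_{4k-2}$ guaranteed by the hypothesis. A natural first attempt is to fix a $(2k-1)$-arc-connected orientation of $G$ (which exists by Nash-Williams' orientation theorem, since $(4k-2)$-tree-connected implies $(4k-2)$-edge-connected) and then use the in/out-degree balance of this orientation, together with the tree packing, to guide the choice of a bipartition $(X,Y)$ of $V(G)$ so that the induced bipartite subgraph $G_1 = G[X,Y]$ still has at least $4k-2$ edges across every non-trivial cut. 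One concrete way to attempt this is to pair the $4k-2$ trees into $2k-1$ double-tree skeletons and argue within each pair, producing for each pair a bipartite spanning $2$-edge-connected contribution, and then to superpose these contributions so that each of the $4k-2$ trees has at least one edge surviving in $G_1$ across every cut.

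The extraction lemma, with its tight constant $4k-2$, is the main obstacle. A straightforward random-bipartition or max-cut argument only preserves roughly half of the edge-connectivity, and so would require the much stronger hypothesis that $G$ be $(8k-4)$-edge-connected; obtaining the tight bound $4k-2$ from tree-connectivity rather than edge-connectivity is precisely what forces the use of the tree-packing structure in an essential way. Once this lemma is established the rest of the argument is short, consisting only of the application of Theorem~\ref{Intro:thm:bi} and the elementary observations above.
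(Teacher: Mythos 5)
There is a genuine gap, and it starts with the hypothesis. The statement you are asked to prove assumes $(6k-7)$-edge-connectivity, but your entire plan is built on the graph being $(4k-2)$-tree-connected — that is the hypothesis of the \emph{different} result proved in Section~\ref{sec:modulo-regular}, not of this cited theorem. These do not match: by Nash--Williams--Tutte, $(6k-7)$-edge-connectivity only yields $\lfloor (6k-7)/2\rfloor=(3k-4)$-tree-connectivity, which is strictly below $4k-2$ for every $k$. So even if your extraction lemma were available, it could not be invoked under the stated hypothesis, and your argument proves (at best) a statement with a stronger assumption than the one given.

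The second problem is the extraction lemma itself, which you correctly identify as the heart of the matter but do not prove: ``every $(4k-2)$-tree-connected graph admits a $(4k-2)$-edge-connected bipartite spanning subgraph.'' The sketch via pairing trees into ``double-tree skeletons'' is not an argument, and the claim is implausibly strong. The paper's own tool for this task, Lemma~\ref{lem:bipartite:factor}, extracts only an $m$-tree-connected bipartite factor from a $2m$-tree-connected graph — a loss of a factor of two — and the paper's Section~\ref{sec:modulo-regular} result is obtained precisely by \emph{avoiding} the edge-connectivity form of the factor theorem: it feeds the resulting $(2k-1)$-tree-connected bipartite factor into the partition-connectivity version (Theorem~\ref{thm:m+4k-4}, whose bipartite case needs only $m+m_0+2k-2$ tree-connectivity), rather than trying to reach the $(2m+4k-4)$-edge-connectivity threshold of Theorem~\ref{Intro:thm:bi} inside a bipartite subgraph. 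Note also that the theorem in question is quoted from~\cite{ModuloFactorBounded} and carries no proof in this paper; it asks only for a bipartite modulo $k$-regular factor, with no connectivity requirement, so routing the argument through a connected $f$-factor imports obstacles (high connectivity of a bipartite subgraph) that the statement does not demand. The reductions you do carry out correctly — compatibility of $f\equiv 0$ and the deduction $d_H(v)\ge k$ from $d_H(v)\ge\lfloor d_G(v)/2\rfloor-(k-1)$ — are fine, but they are the easy part.
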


Recently, the present author investigated tree-connected factors with small degrees in edge-connected graphs and concluded the following theorem. 
\begin{thm}{\rm(\cite{ClosedTrails})}\label{thm:base}
{Every $2m$-edge-connected graph $G$ has an $m$-tree-connected factor $H$
 including an arbitrary given factor with maximum degree at most $m$
 such that for each vertex $v$,
$$d_H(v) \le \lceil \frac{d_G(v)}{2}\rceil+m.$$
}\end{thm}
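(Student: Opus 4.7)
The plan is to recast the conclusion as the existence of a suitable ``removable'' complementary subgraph. Setting $L := E(G) \setminus E(H)$, the required $H$ exists if and only if we can find $L \subseteq E(G) \setminus E(F)$ with $d_L(v) \ge \lfloor d_G(v)/2 \rfloor - m$ at every vertex $v$, such that $G \setminus L$ remains $m$-tree-connected. The disjointness $L \cap E(F) = \emptyset$ encodes $F \subseteq H$, the degree lower bound on $L$ produces the upper bound on $d_H(v) = d_G(v) - d_L(v)$, and the tree-connectedness condition on $G \setminus L$ is exactly that on $H$.

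The natural tool is the orientation-based decomposition theorem underlying partition-connectedness: a graph $\Gamma$ decomposes as $\Gamma = T \cup L$ with $T$ being $m$-tree-connected and $L$ admitting an orientation satisfying $d^+_L(v) \ge \ell_0(v)$, provided the appropriate partition inequality holds. I would apply such a result to $\Gamma := G - E(F)$ with $\ell_0(v) := \max\{0,\ \lfloor d_G(v)/2 \rfloor - m\}$, and then set $H := T \cup F$. Since adding edges to an $m$-tree-connected graph preserves $m$-tree-connectedness, $H$ is $m$-tree-connected and contains $F$; and because $d^+_L(v) \ge \ell_0(v)$ forces $d_L(v) \ge \ell_0(v)$, we obtain $d_H(v) \le d_G(v) - \ell_0(v) \le \lceil d_G(v)/2 \rceil + m$, as required.

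The main obstacle is verifying the partition inequality for $\Gamma = G - E(F)$, namely
$$e_{G - E(F)}(\mathcal{P}) \ge m(p-1) + \sum_{v} \ell_0(v)$$
(or its local per-part refinement) for every partition $\mathcal{P}$ of $V(G)$ into $p \ge 2$ parts. Because $G$ is $2m$-edge-connected, each part has cut at least $2m$, giving $e_G(\mathcal{P}) \ge mp$, while Nash-Williams--Tutte tree-packing provides the weaker $e_G(\mathcal{P}) \ge m(p-1)$. The edges of $F$ crossing $\mathcal{P}$ are bounded by $\tfrac{1}{2}\sum_{v} d_F(v) \le \tfrac{1}{2} m |V(G)|$ globally, and by $\Delta(F) \le m$ locally, which must be matched against $\sum_{v} \ell_0(v)$ using the identity $\sum_{v} \lfloor d_G(v)/2\rfloor \le |E(G)|$. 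The extremal case appears to be a bipartition in which $F$ concentrates on the cut; in this regime both the $2m$-edge-connectedness and the full strength of $\Delta(F) \le m$ are needed, and cleanly orchestrating this bookkeeping is the technical work that \cite{ClosedTrails} carries out.
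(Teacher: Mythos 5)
Your opening reformulation is fine and matches the framework the paper actually uses: writing $L=E(G)\setminus E(H)$, the theorem is equivalent to finding $L\subseteq E(G)\setminus E(F)$ with $d_L(v)\ge \lfloor d_G(v)/2\rfloor-m$ whose removal leaves an $m$-tree-connected graph, and this is precisely what Theorem~\ref{thm:basic:M} delivers (a $(0,l)$-partition-connected complement with $l(v)=\lfloor d_G(v)/2\rfloor-m$ has an orientation with $d^+\ge l$, hence degree at least $l$). The gap is in the execution: you apply the tree-connected/orientation decomposition to $\Gamma:=G-E(F)$ and take $T\subseteq\Gamma$ to be the $m$-tree-connected part, so your construction requires $G-E(F)$ to contain $m$ edge-disjoint spanning trees. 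That is false in general, and no bookkeeping on the partition inequality can repair it. Concretely, take $m=2$, $G=K_5$ (which is $4$-edge-connected) and $F$ a Hamiltonian cycle (so $\Delta(F)=2\le m$); then $G-E(F)$ is a $5$-cycle with $5<2\cdot 4$ edges, hence not $2$-tree-connected, and the partition of $V(G)$ into singletons violates the inequality of Lemma~\ref{lem:criterion} for $\Gamma$. The theorem itself is still true here ($H=K_5$ works), which shows the failure lies in the reduction, not the statement. The underlying point is that the slack between $e_G(P)\ge m|P|$ (from $2m$-edge-connectivity) and the required $m(|P|-1)+\sum_{\{v\}\in P}l(v)$ is only about $m$ edges, whereas $F$ can place on the order of $m|V(G)|/2$ edges across a partition; subtracting $E(F)$ up front destroys the inequality.

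The paper's route avoids this by never deleting $E(F)$: one first verifies, exactly as in the proof of Theorem~\ref{thm:basic:MM0}, that $G$ \emph{itself} is $(m,l)$-partition-connected with $l(v)=\lfloor d_G(v)/2\rfloor-m$, and then invokes Lemma~\ref{lem:M}, which says that in an $(m,l)$-partition-connected graph the $m$-tree-connected part of the decomposition can be \emph{chosen to contain} any prescribed factor $M$ with $\Delta(M)\le m$, while the complement stays $(0,l)$-partition-connected. That containment lemma is the genuinely nontrivial ingredient your argument is missing; it is not recoverable from the plain decomposition theorem applied to a subgraph. If you want to salvage your write-up, replace the application to $G-E(F)$ by: (i) the partition-count verification for $G$, and (ii) an appeal to (or proof of) Lemma~\ref{lem:M}.
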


A weaker version of Theorem~\ref{thm:base} is developed in \cite{AHO} in two ways as the next theorems with the same needed edge-connectivity.
In Sections~\ref{sec:lists} and~\ref{sec:bounded}, we improve them by imposing $H$ to have an arbitrary given factor with maximum degree at most $m$. Moreover, we develop them to a decomposition version.
\begin{thm}{\rm(\cite{AHO})}
{Every $2m$-edge-connected graph $G$ with $m> 0$ has an $m$-tree-connected factor $H$ such that for each vertex $v$,
$$\lfloor \frac{d_G(v)}{2}\rfloor \le d_H(v) \le \lceil \frac{d_G(v)}{2}\rceil+m.$$
}\end{thm}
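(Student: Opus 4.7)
Proof proposal. The upper bound $d_H(v) \le \lceil d_G(v)/2\rceil+m$ together with the $m$-tree-connectedness of $H$ is precisely what Theorem~\ref{thm:base} delivers when applied with the empty initial factor, so the real issue is to arrange the matching lower bound $\lfloor d_G(v)/2\rfloor\le d_H(v)$ simultaneously. Equivalently, writing $\bar H = G\setminus E(H)$, the goal is to produce an $m$-tree-connected factor $H$ for which $d_{\bar H}(v)\le \lceil d_G(v)/2\rceil$ at every vertex.

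The plan is to start from a near-Eulerian orientation of $G$. Since $G$ is $2m$-edge-connected with $m\ge 1$, every vertex has degree at least $2$; pairing up the odd-degree vertices via a $T$-join and then Eulerly orienting the resulting even graph produces an orientation $D$ of $G$ in which $d_D^+(v),\,d_D^-(v)\in\{\lfloor d_G(v)/2\rfloor,\,\lceil d_G(v)/2\rceil\}$ at every vertex. Let $F_0$ be the factor formed by the out-arcs of $D$; then $d_{F_0}(v)\ge \lfloor d_G(v)/2\rfloor$ everywhere, so forcing $F_0\subseteq H$ would deliver the desired lower bound for free. The difficulty is that $\Delta(F_0)\approx d_G(v)/2$ is typically far larger than the $m$ allowed in the ``arbitrary initial factor'' clause of Theorem~\ref{thm:base}, so $F_0$ itself cannot be fed into that theorem directly.

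I would finish by enlarging $F_0$ to an $m$-tree-connected factor $H$ using at most $m$ additional edges per vertex drawn from the complementary set $F_0'=E(G)\setminus F_0$; this keeps $d_H(v)\in[\lfloor d_G(v)/2\rfloor,\,\lceil d_G(v)/2\rceil+m]$ automatically. The existence of such an augmentation reduces to packing $m$ edge-disjoint spanning trees in $G$ subject to the partition-matroid constraint ``at most $m$ edges of $F_0'$ are incident to each vertex,'' which I would establish via matroid union on top of Nash-Williams/Tutte, taking the $2m$-edge-connectivity of $G$ as the input to the required partition inequalities. The main obstacle is this last step: verifying that $m$-tree-connectedness genuinely survives the partition cap on $F_0'$-edges. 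Here the $2m$-edge-connectivity must be used in full strength to ensure that every partition of $V(G)$ is crossed by enough $F_0$-edges plus capped $F_0'$-edges to support $m$ spanning trees, with the slack $+m$ in the upper bound absorbing exactly the $m$ trees' worth of $F_0'$-edges that need to be pulled into $H$ beyond $F_0$.
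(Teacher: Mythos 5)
Your construction breaks down at its very first object. ``The factor formed by the out-arcs of $D$'' is not a proper spanning subgraph: every edge of $G$ is the out-arc of its tail, so as an undirected edge set the out-arcs comprise all of $E(G)$ and $d_{F_0}(v)=d_G(v)$, not $d_D^+(v)$. What your plan actually needs is an undirected factor $F_0$ with $d_{F_0}(v)\in\{\lfloor d_G(v)/2\rfloor,\lceil d_G(v)/2\rceil\}$ at every vertex, and a near-Eulerian orientation does not produce one (that trick only works in bipartite graphs, where one can take the arcs directed from $X$ to $Y$). Such a half-degree factor need not exist at all: in $K_3$, which is $2$-edge-connected so $m=1$ applies, it would have to be a perfect matching on three vertices. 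So the subgraph on which the whole argument rests is unavailable in general. Moreover, even where it exists, forcing $F_0\subseteq H$ is strictly stronger than the theorem's conclusion $d_H(v)\ge\lfloor d_G(v)/2\rfloor$, so you are making the problem harder than it is.

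The second gap is the one you flag yourself, and it is not a technicality: augmenting a prescribed half-degree factor $F_0$ to an $m$-tree-connected $H$ using at most $m$ extra edges per vertex is essentially the whole theorem. By Lemma~\ref{lem:criterion} one needs $e_H(P)\ge m(|P|-1)$ for every partition $P$, but $F_0$ can cross a given partition in very few edges (its degree constraints say nothing about how it meets cuts), and the capped complement edges need not make up the deficit; $2m$-edge-connectivity does not by itself yield the required partition inequalities once half the edge budget at each vertex is already committed. The paper proceeds in the opposite order (Theorem~\ref{thm:bounded:partition} with $m_0=0$): it first extracts, via Theorem~\ref{thm:basic:M} and Lemma~\ref{lem:M}, a small $m$-tree-connected factor $F$ together with a $(0,l_0)$-partition-connected remainder with $l_0(v)=\lfloor d_G(v)/2\rfloor-m$, orients that remainder to certify the hypothesis of Corollary~\ref{cor:F-F0} (Lov\'asz's $(g,f)$-factor theorem relative to an included factor), and only then fixes the degrees by finding a $(g,f)$-factor containing $F$; the tree-connectivity of $H$ is inherited from $F\subseteq H$ rather than re-packed after the degrees are prescribed. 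If you want to salvage your outline, you would have to replace $F_0$ by such an $F$ and let a degree-constrained factor theorem, not a tree-packing argument, do the balancing.
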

\begin{thm}{\rm(\cite{AHO})}
{Let $G$ be a $2m$-edge-connected graph.
For each vertex $v$, let $L(v)\subseteq \{m,\ldots, d_G(v)\}$.
Then $G$ has an $m$-tree-connected $L$-factor, 
if for each vertex $v$, 
$$
|L(v)|\ge \lceil\frac{d_G(v)}{2}\rceil+1.$$
}\end{thm}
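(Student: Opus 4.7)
The plan is to bootstrap from Theorem~\ref{thm:base} and induct on the total number of forbidden degrees $\sum_v|\bar L(v)|$, where $\bar L(v):=\{m,\ldots,d_G(v)\}\setminus L(v)$. The key dimensional observation is that every $m$-tree-connected factor $H$ of $G$ satisfies $d_H(v)\ge m$ (each of the $m$ edge-disjoint spanning trees contributes at least one edge at $v$), while Theorem~\ref{thm:base} applied with an empty prescribed factor produces an $H$ with $d_H(v)\le\lceil d_G(v)/2\rceil+m$. Thus the attainable window at each vertex has size exactly $\lceil d_G(v)/2\rceil+1$, equal to the lower bound demanded of $|L(v)|$; equivalently $|\bar L(v)|\le\lfloor d_G(v)/2\rfloor-m$ for every $v$.

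The base case $\sum_v|\bar L(v)|=0$, i.e.\ $L(v)=\{m,\ldots,d_G(v)\}$ for every $v$, is immediate: any $m$-tree-connected factor (for instance $G$ itself, available from Nash-Williams--Tutte) is automatically an $L$-factor. For the inductive step I would fix a vertex $v_0$ and a value $t\in\bar L(v_0)$ and first apply induction to the enlarged list $L'$ that agrees with $L$ everywhere except $L'(v_0):=L(v_0)\cup\{t\}$, obtaining an $m$-tree-connected $L'$-factor $H$. If $d_H(v_0)\ne t$ we are done, so assume $d_H(v_0)=t$. The remaining task is to shift $d_H(v_0)$ off $t$ by one unit while keeping every other $d_H(v)$ in $L(v)$. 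The main tool is the \emph{prescribed-factor} flexibility of Theorem~\ref{thm:base}: by forcing a single edge incident to $v_0$ into (or excluding a single edge incident to $v_0$ from, via a dual application to the complement) the prescribed factor, one raises or lowers $d_H(v_0)$ by one while $m$-tree-connectedness is preserved through the $2m$-edge-connectivity of $G$.

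The hard part is organising this local shift globally, since the perturbation at $v_0$ must be absorbed somewhere else and could in principle create a new violation at the other endpoint of the swapped edge. I expect this to be handled by an alternating-walk exchange in the symmetric difference $H\triangle H'$ of two candidate $L'$-factors that differ only in their degrees at $v_0$: such a walk transfers one unit of degree from $v_0$ along a path that alternates between $H$ and $G\setminus H$, ending at a vertex whose list still has slack. The arithmetic condition $|L(v)|\ge\lceil d_G(v)/2\rceil+1$ is precisely what guarantees that such a slack-vertex is always available, while the $2m$-edge-connectivity ensures that the complement $G\setminus H$ has enough room for the underlying Nash-Williams--Tutte spanning-tree packings to survive every swap. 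Verifying that this exchange terminates and respects the list at every intermediate vertex is, I expect, the most technical step of the argument.
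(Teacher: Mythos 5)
There is a genuine gap. Your inductive step is not actually a reduction to the inductive hypothesis: after enlarging the list at $v_0$ you obtain an $L'$-factor $H$, and when $d_H(v_0)=t$ you must produce a different $m$-tree-connected factor whose degree at $v_0$ avoids $t$ while all other degrees stay in their lists --- but your proposed mechanism (an alternating walk in the symmetric difference $H\triangle H'$ of ``two candidate $L'$-factors that differ only at $v_0$'') presupposes the existence of the second factor $H'$, which is essentially the statement to be proved. Even granting some alternating-path exchange rooted at $v_0$, its terminal vertex $w$ also has its degree shifted by one, and since only a density condition is imposed on the lists, there may be no reachable $w$ with $d_H(w)\pm 1\in L(w)$: a vertex with $d_H(w)=c$ and $c-1,c+1\notin L(w)$ is perfectly consistent with $|L(w)|\ge\lceil d_G(w)/2\rceil+1$ as soon as $\lfloor d_G(w)/2\rfloor-m\ge 2$, so the claim that the arithmetic condition ``precisely guarantees'' a slack vertex is unsubstantiated. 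Finally, the assertion that $2m$-edge-connectivity lets the spanning-tree packing ``survive every swap'' is exactly the kind of statement that needs proof; edge exchanges can destroy a given packing, and nothing in the proposal controls this.

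The paper's route is entirely different and avoids exchanges. It first uses Theorem~\ref{thm:basic:M} to split a $2m$-edge-connected $G$ into an $m$-tree-connected factor $F$ and a remainder that is $(0,l_0)$-partition-connected with $l_0(v)=\lfloor d_G(v)/2\rfloor-m$, hence orientable with in-degree at least $l_0(v)$ at every vertex; this forces $d_G^+(v)+d_F^-(v)\le d_G(v)-l_0(v)$. It then invokes the directed list-factor theorem of Frank--Lau--Szab\'o and Shirazi--Verstra\"ete (Theorem~\ref{thm:list:base-version}, proved by the polynomial method), in the form of Corollary~\ref{cor:list:F-F0}, to find an $L$-factor of $G$ that contains $F$; the hypothesis $|L(v)|\ge\lceil d_G(v)/2\rceil+1$ is exactly the out-degree-plus-one threshold after this orientation. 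That Nullstellensatz-type tool is what copes with arbitrary, possibly gappy, lists; a one-unit local exchange argument does not, so the proposal would need to import some such tool (or restrict to interval lists) to be salvageable.
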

%
%
%
%
%
%
%
%
\section{Basic tools}
In this section, we shall state some basic tools for working with edge-connected graphs.
For our purpose, the following theorem would be sufficient but for making minor refinements, we need to improve this result slightly.
\begin{thm}{\rm (\cite{ClosedTrails})}\label{thm:basic:M}
{Let $G$ be a graph with a loopless factor $M$ satisfying $\Delta(M)\le m$.
If $G$ is $2m$-edge-connected, then it has an $m$-tree-connected factor $H$ including $M$ such that $G\setminus E(H)$ is $(0,l)$-partition-connected, where for each vertex $v$, $$l(v)= \lfloor d_{G}(v)/2\rfloor-m.$$
Furthermore, for an arbitrary given vertex $z$, we can have $l(z)=\lceil d_{G}(z)/2\rceil$.
}\end{thm}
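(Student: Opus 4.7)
The statement coincides with Theorem~2.1 of~\cite{ClosedTrails} except for the final clause, which permits the improved slack $l(z)=\lceil d_G(z)/2\rceil$ at a single designated vertex $z$. My plan is to deduce this ``furthermore'' refinement from the original theorem by a short reduction on a slightly enlarged graph followed by a Hakimi-style orientation check.

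Let $\epsilon=\lceil d_G(z)/2\rceil-\lfloor d_G(z)/2\rfloor\in\{0,1\}$, and form $G^{*}$ from $G$ by adjoining a new vertex $z'$ joined to $z$ by $2m+2\epsilon$ parallel edges $e_1,\ldots,e_{2m+2\epsilon}$. The only new minimum cut isolates $z'$ and has size $2m+2\epsilon\ge 2m$, so $G^{*}$ remains $2m$-edge-connected, while $M$ is still a loopless factor of $G^{*}$ with $\Delta(M)\le m$. Applying the unrefined statement to $(G^{*},M)$ yields an $m$-tree-connected factor $H^{*}\supseteq M$ of $G^{*}$ whose complement $F^{*}=G^{*}\setminus E(H^{*})$ is $(0,l^{*})$-partition-connected, where $l^{*}(v)=\lfloor d_{G^{*}}(v)/2\rfloor - m$. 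A direct computation gives $l^{*}(v)=l(v)$ for $v\notin\{z,z'\}$, $l^{*}(z)=\lceil d_G(z)/2\rceil$, and $l^{*}(z')=\epsilon$. I set $H=H^{*}\setminus\{e_1,\ldots,e_{2m+2\epsilon}\}$, regarded as a factor of $G$; since $z'$ is a leaf in every spanning tree of $H^{*}$, each of the $m$ edge-disjoint spanning trees packed inside $H^{*}$ uses exactly one $e_j$, so removing the $e_j$'s leaves $m$ edge-disjoint spanning trees of $G$, whence $H$ is $m$-tree-connected and $M\subseteq H$.

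It remains to verify that $F=G\setminus E(H)$ is $(0,l')$-partition-connected, where $l'(z)=\lceil d_G(z)/2\rceil$ and $l'(v)=l(v)$ for $v\neq z$. By Hakimi's orientation criterion this amounts to showing, for every $A\subseteq V(G)$, that the number of edges of $F$ with at least one endpoint in $A$ is at least $\sum_{v\in A}l'(v)$. If $z\notin A$, no $e_j$ touches $A$, and the desired inequality is inherited verbatim from the corresponding inequality for $F^{*}$ at $A$. If $z\in A$, I would apply the $F^{*}$-inequality to $B=A\cup\{z'\}$: writing $c$ for the number of $e_j$'s that survive in $F^{*}$, the edges of $F^{*}$ incident to $B$ equal those of $F$ incident to $A$ plus $c$, while $\sum_{v\in B}l^{*}(v)=\sum_{v\in A}l'(v)+\epsilon$. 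Thus the $F^{*}$-inequality at $B$ delivers the $F$-inequality at $A$ provided $c\ge\epsilon$.

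The crux of the argument, and the only potential obstacle, is exactly this bound $c\ge\epsilon$, and this is precisely where the choice of multiplicity $2m+2\epsilon$ pays off: any feasible orientation witnessing $(0,l^{*})$-partition-connectedness of $F^{*}$ must have $d^{+}(z')\ge l^{*}(z')=\epsilon$, and since the only edges of $F^{*}$ incident to $z'$ are the residual $e_j$'s, we get $c\ge\epsilon$ for free. The parameter $2m+2\epsilon$ is therefore calibrated so that on the one hand $G^{*}$ retains $2m$-edge-connectivity with enough parallel edges to guarantee $m$ lie in $H^{*}$ and at least $\epsilon$ lie in $F^{*}$, and on the other hand $\lfloor d_{G^{*}}(z)/2\rfloor - m$ lands precisely on the target value $\lceil d_G(z)/2\rceil$.
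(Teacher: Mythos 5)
There is a genuine gap, and it sits exactly where you locate the ``crux'': the inequality you need points the other way. Writing $|\partial_F(A)|$ for the number of edges of $F$ meeting $A$, the $F^{*}$-inequality at $B=A\cup\{z'\}$ reads $|\partial_F(A)|+c\ge\sum_{v\in A}l'(v)+\epsilon$, which only yields $|\partial_F(A)|\ge\sum_{v\in A}l'(v)-(c-\epsilon)$. This implies the $F$-inequality at $A$ precisely when $c\le\epsilon$, not when $c\ge\epsilon$; your argument that $d^{+}(z')\ge\epsilon$ forces $c\ge\epsilon$ therefore establishes the wrong bound. Worse, $c\le\epsilon$ cannot be arranged: each of the $m$ edge-disjoint spanning trees in $H^{*}$ uses exactly one $e_j$, and nothing forces $H^{*}$ to use more, so typically (e.g.\ for a minimally $m$-tree-connected $H^{*}$) exactly $m$ of the $2m+2\epsilon$ parallel edges lie in $H^{*}$ and $c=m+2\epsilon$. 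The same failure is visible at the level of orientations: $F^{*}$ has an orientation with $d^{+}_{F^{*}}(z)\ge\lceil d_G(z)/2\rceil$, but at most $c-\epsilon\le m+\epsilon$ of those out-edges can be non-gadget edges short of... rather, up to $c-\epsilon$ of them may be gadget edges $z\to z'$, so after deleting them you only retain $d^{+}_F(z)\ge\lceil d_G(z)/2\rceil-(c-\epsilon)\ge\lfloor d_G(z)/2\rfloor-m$, which is exactly the unrefined bound. The gadget's boost of $l^{*}(z)$ by $m+\epsilon$ is entirely paid for by the surviving parallel edges that you then delete, so the construction is circular and yields no improvement.

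For context: the paper does not prove Theorem~\ref{thm:basic:M} at all --- it is imported from~\cite{ClosedTrails} --- so there is no in-paper proof to match. The nearest argument in the paper, the proof of Theorem~\ref{thm:basic:MM0}, works directly with the partition criterion of Lemma~\ref{lem:criterion} and a rounding of $\sum_{X\in P}\tfrac12 d_G(X)$, and that route only delivers $l(z)=\lceil d_G(z)/2\rceil-m$ at the distinguished vertex. The ``furthermore'' clause you are after, $l(z)=\lceil d_G(z)/2\rceil$ with nothing subtracted, is a strictly stronger global statement (it forces $d_H(z)\le\lfloor d_G(z)/2\rfloor$ and all remaining edges at $z$ to be orientable outward), and it will not follow from the unrefined theorem by a local gadget at $z$; you would need to revisit the construction of $H$ itself, as is done in~\cite{ClosedTrails}.
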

Before stating the main result, let us recall the following lemmas from~\cite{ClosedTrails,West-Wu-2012}.
\begin{lem}{\rm (\cite{ClosedTrails})}\label{lem:M}
{Let $G$ be a graph with a factor $M$ satisfying $\Delta(M)\le m$ and let $l$ be a nonnegative integer-valued function on $V(G)$.
If $G$ is $(m,l)$-partition-connected, then it has an $m$-tree-connected factor $H$ containing $M$ such that the complement of it is $(0,l)$-partition-connected.
}\end{lem}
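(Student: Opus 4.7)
My plan is to prove Lemma \ref{lem:M} by an edge-swapping argument applied to an extremal decomposition. Since $G$ is $(m,l)$-partition-connected, I can start with some decomposition $E(G)=E(H_0)\sqcup E(F_0)$ in which $H_0$ is $m$-tree-connected and $F_0$ carries an orientation $\sigma_0$ with $d^+_{F_0,\sigma_0}(v)\ge l(v)$ for every $v$. Among all triples $(H,F,\sigma)$ of this form, I would select one maximizing $|E(H)\cap E(M)|$. If $E(M)\subseteq E(H)$, then $H$ is the desired factor and $F$ together with $\sigma$ witnesses that its complement is $(0,l)$-partition-connected, so the lemma holds.

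Assume for contradiction that there is an edge $e=uv\in E(M)\setminus E(H)$, with $\sigma$ orienting $e$ from $u$ to $v$. The first step is to collect candidate edges $e'\in E(H)$ for which $H+e-e'$ is still $m$-tree-connected. Since $H$ contains $m$ edge-disjoint spanning trees $T_1,\dots,T_m$, the unique $uv$-path in each $T_i$ yields an edge whose swap with $e$ keeps $H$ $m$-tree-connected; the union of these paths produces a set $\mathcal{C}$ of edge-disjoint candidates whose total size is comfortably large relative to the degree bound $d_M(w)\le m$.

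The second and crucial step is to show that some $e'\in\mathcal{C}\setminus E(M)$ also preserves orientability of the complement, i.e.\ that $F-e+e'$ admits an orientation with $d^+(v)\ge l(v)$. To find such an $e'$, I would analyze the reachability digraph in $F-e$: let $R$ be the set of vertices reachable from $u$ along directed arcs. If some $w\in R$ satisfies $d^+_{F-e,\sigma}(w)>l(w)$, then reversing a directed $u$-to-$w$ path in $F-e$ already repairs the deficit at $u$, and any $e'\in\mathcal{C}\setminus E(M)$ completes the exchange. Otherwise the set $R$ is tight, forcing every $F$-edge between $R$ and $V(G)\setminus R$ to be oriented into $R$; here I would use the $m$-tree-connectedness of $H$ (which guarantees at least $m$ edges of $H$ crossing the cut $(R,V(G)\setminus R)$) to select an $e'\in\mathcal{C}$ crossing this cut and then orient it suitably, supplying the missing unit of out-degree at $u$ via a rerouting along $R$.

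The main obstacle is coordinating these two exchanges: matroid exchange in the $m$-fold graphic matroid is local, while the orientability constraint is global. I would resolve this by reformulating orientability through the Hakimi criterion $|E(F[X])|+d_F(X)\ge \sum_{v\in X}l(v)$ for every $X\subseteq V(G)$, and checking set-by-set that the new decomposition still satisfies it; the structural information from the tight set $R$, combined with the guaranteed $H$-edges across $(R,V(G)\setminus R)$, makes this verification go through. A final counting argument exploiting $\Delta(M)\le m$ ensures the chosen $e'$ can be picked outside $E(M)$, contradicting the maximality of $|E(H)\cap E(M)|$ and completing the proof.
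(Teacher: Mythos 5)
Note first that this lemma is not proved in the paper at all: it is imported verbatim from \cite{ClosedTrails}, so there is no in-paper argument to compare yours against, and I can only assess your sketch on its own terms. As written, it has three genuine gaps, each at a load-bearing point. (i) The path-reversal step goes the wrong way. Deleting $e=uv$ (oriented $u\to v$) creates a deficit of one in the \emph{out}-degree of $u$; reversing a directed path from $u$ to $w$ turns its first arc $u\to x_1$ into $x_1\to u$ and therefore \emph{decreases} $d^+(u)$ further while increasing $d^+(w)$. To repair the deficit you must reverse a directed path from a surplus vertex $w$ \emph{to} $u$, so the relevant set is the set of vertices from which $u$ can be reached, not your set $R$ of vertices reachable from $u$; the tight-set analysis that follows is built on the wrong cut. (ii) Even with the correct reachability set, the coordination step is not established: your candidates $\mathcal{C}$ are the edges of the fundamental $uv$-paths of $e$ in $T_1,\dots,T_m$, and when $u$ and $v$ lie on the same side of the tight cut these paths need not meet the cut at all, so no $e'\in\mathcal{C}$ need be available there; the $m$ edges of $H$ that tree-connectivity guarantees across the cut are in general not in $\mathcal{C}$, and swapping $e$ with an $H$-edge off the fundamental cycles destroys $m$-tree-connectedness. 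The sentence asserting that the Hakimi-type verification ``goes through'' is exactly the statement that needs proof.

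(iii) The final claim that $e'$ can be chosen outside $E(M)$ ``by a counting argument exploiting $\Delta(M)\le m$'' is unsupported. The candidate edges lie on tree paths anywhere in the graph, so the degree bound on $M$ does not control how many of them belong to $M$: a fundamental $uv$-path in some $T_i$ can consist entirely of edges of $E(M)\cap E(H)$, and exchanging $e$ for such an edge leaves your potential $|E(H)\cap E(M)|$ unchanged, yielding no contradiction. If you want to salvage the argument, a route that avoids all three problems is to work globally rather than by local exchanges: $(m,l)$-partition-connectivity is equivalent (via Lemma~\ref{lem:criterion}) to $E(G)$ being spanning in the union of the $m$-fold graphic matroid with the matroid of edge sets orientable with out-degree at most $l$; since $\Delta(M)\le m$ forces $E(M)$ to decompose into $m$ forests, $E(M)$ is independent in the first matroid, and one then verifies the rank inequality needed to extend $E(M)$ inside the first coordinate of a basis of the union. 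That computation is where $\Delta(M)\le m$ actually enters, and it is absent from your sketch.
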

\begin{lem}{\rm (\cite{West-Wu-2012})}\label{lem:criterion}
{Let $G$ be a graph and let $l$ be a nonnegative integer-valued function on $V(G)$.
Then $G$ is $(m,l)$-partition-connected if and only if $e_{G}(P)\ge m(|P|-1)+\sum_{\{v\}\in P}l(v)$, where $e_G(P)$
denotes the number of edges of $G$ joining different parts of $P$ plus the number of loops with both ends $v$ with $\{v\}\in P$.
}\end{lem}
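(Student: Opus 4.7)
The plan is to prove the two directions separately, with necessity routine and sufficiency the substantive part. For necessity, I would suppose $G=H\cup F$, where $H$ is $m$-tree-connected and $F$ admits an orientation with $d^+_F(v)\ge l(v)$ for every vertex $v$. Given a partition $P$, the Nash-Williams/Tutte theorem applied to $H$ yields $e_H(P)\ge m(|P|-1)$. An edge of $F$ oriented out of a singleton $\{v\}\in P$ either has its head in a different part of $P$ or is a loop at $v$; either way it contributes to $e_F(P)$, and since each edge is charged to its unique tail, these contributions are distinct, giving $e_F(P)\ge \sum_{\{v\}\in P}d^+_F(v)\ge \sum_{\{v\}\in P}l(v)$. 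Summing the two bounds yields the required partition inequality.

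For sufficiency, I would proceed by induction on $\sum_{v}l(v)$. The base case $l\equiv 0$ is exactly the Nash-Williams/Tutte theorem applied to $G$ itself. For the inductive step, I would pick a vertex $v_0$ with $l(v_0)\ge 1$, set $l'=l-\chi_{v_0}$, and seek an edge $e$ incident to $v_0$ such that $G-e$ still satisfies the partition hypothesis with respect to $l'$. Given such an $e$, the inductive hypothesis produces a decomposition $G-e=H\cup F'$ of the desired form; appending $e$ to $F'$ and orienting it out of $v_0$ extends the orientation and gives the required decomposition of $G$.

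The main obstacle is to show that a suitable $e$ always exists. Partitions in which $\{v_0\}$ is a singleton cause no trouble, because the two sides of the inequality both drop by exactly $1$ when $e$ is removed and $l(v_0)$ is decreased. The dangerous case is a partition $P$ in which $v_0$ lies in a non-singleton part: there the right-hand side is unchanged, so the inequality can fail only when $e$ crosses $P$ and $P$ already attains equality. The heart of the induction is an uncrossing step: if every edge at $v_0$ were blocked by such a tight partition, then a suitable common refinement or coarsening of two blocking partitions, combined with the submodularity of $e_G(\cdot)$ under partition refinement, would yield a partition violating the original hypothesis, a contradiction.

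As an alternative, a matroidal proof is available: the $m$-tree-connected edge sets are the spanning sets of the $m$-fold graphic matroid, and by Hakimi's orientation theorem the $l$-orientable edge sets are the spanning sets of a second matroid on $E(G)$. The partition inequality translates directly into the hypothesis of Edmonds' matroid partition theorem, which supplies the decomposition at once. This route is less elementary but bypasses the uncrossing step entirely.
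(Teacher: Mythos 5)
The paper itself gives no proof of this lemma; it is imported verbatim from West and Wu, so your argument has to stand on its own. Your necessity direction does: the Nash--Williams/Tutte bound $e_H(P)\ge m(|P|-1)$ plus the charging of the $\ge l(v)$ out-edges of each singleton part $\{v\}$ to their tails (with loops at $v$ being exactly why the loop term sits inside $e_G(P)$) is complete. The inductive framework for sufficiency is also correctly set up, including the observation that partitions with $\{v_0\}$ as a singleton lose one unit on each side when an edge at $v_0$ is deleted and $l(v_0)$ is decremented, so only tight partitions with $v_0$ in a non-singleton part can block an edge.

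The gap is that the uncrossing step, which is the entire substance of the sufficiency direction, is asserted rather than carried out, and the things that need checking are exactly where it can go wrong. What is required is that $\mathrm{def}(P)=e_G(P)-m(|P|-1)-\sum_{\{v\}\in P}l(v)$ satisfy $\mathrm{def}(P_1\wedge P_2)+\mathrm{def}(P_1\vee P_2)\le \mathrm{def}(P_1)+\mathrm{def}(P_2)$; ``submodularity of $e_G(\cdot)$'' alone does not give this, because while the crossing-edge count and $n-|P|$ behave submodularly, both the singleton sum $\sum_{\{v\}\in P}l(v)$ and the loops-at-singletons term inside $e_G(P)$ are supermodular (a vertex can become a singleton in the common refinement without being one in either $P_i$), they enter $\mathrm{def}$ with opposite signs, and loops at vertices other than $v_0$ genuinely disturb the inequality unless they are disposed of first (already in your base case, $e_G(P)$ with its loop term is not literally the Nash--Williams/Tutte count). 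You also never say what happens when the common refinement of two blocking partitions shrinks the part containing $v_0$ to $\{v_0\}$ itself, at which point the accumulated partition no longer blocks anything; in fact that case self-destructs, because the refinement then acquires an extra $l(v_0)\ge 1$ in the singleton sum and the submodular inequality forces a partition of negative deficiency, and the non-degenerate case ends by splitting $\{v_0\}$ off from its part in the accumulated tight blocker to produce deficiency $-m-l(v_0)<0$. Those two computations, plus the loop preprocessing, \emph{are} the proof, and none of them appear in your write-up. The matroidal alternative has the analogous unexecuted step: identifying the $l$-orientable sets with the spanning sets of a matroid requires that matroid to have rank $\sum_v l(v)$, and converting Edmonds' condition on the two rank functions into the single partition inequality with its singleton-restricted sum is where all the work lies.
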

The following theorem provides a minor improvement for Theorem~\ref{thm:basic:M}.
\begin{thm}\label{thm:basic:MM0}
{Let $G$ be a graph with two edge-disjoint loopless factors $M$ and $M_0$ satisfying $\Delta(M)\le m$ and $|E(M_0)|\le m$. 
If $G$ is $2m$-edge-connected, then it has an $m$-tree-connected factor $H$ including $M$ excluding $M_0$ such that $G\setminus E(H\cup M_0)$ is $(0,l)$-partition-connected, where for each vertex $v$, $$l(v)= \lfloor d_{G}(v)/2\rfloor-m.$$
Furthermore, for an arbitrary given vertex $z$, we can have $l(z)=\lceil d_{G}(z)/2\rceil -m$.
}\end{thm}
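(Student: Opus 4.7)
The plan is to reduce the statement to Lemma~\ref{lem:M} applied to the graph $G' := G \setminus E(M_0)$. Since $M$ and $M_0$ are edge-disjoint, $M$ is a loopless factor of $G'$ with $\Delta(M) \le m$. Once I verify that $G'$ is $(m,l)$-partition-connected, Lemma~\ref{lem:M} produces an $m$-tree-connected factor $H$ of $G'$ (hence of $G$) containing $M$, automatically disjoint from $M_0$, whose complement in $G'$ is $(0,l)$-partition-connected. As $G' \setminus E(H) = G \setminus E(H \cup M_0)$, this is exactly the required conclusion.

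The heart of the proof is therefore verifying the partition-connectivity of $G'$ via Lemma~\ref{lem:criterion}. For an arbitrary partition $P$ of $V(G)$ with $r = |P| \ge 2$ (the case $r = 1$ being trivial), I would separate cross-edges from loops at singleton parts. Using $d_G(\{v\}) = d_G(v) - 2\,\mathrm{loops}_G(v)$, a short calculation gives
$$e_G(P) = \tfrac{1}{2} \sum_{\{v\} \in P} d_G(v) + \tfrac{1}{2} \sum_{\substack{A \in P \\ |A| \ge 2}} d_G(A).$$
Since $G$ is $2m$-edge-connected, $d_G(A) \ge 2m$ for every non-singleton $A \in P$ and $d_G(v) \ge d_G(\{v\}) \ge 2m$ for every singleton, so $\tfrac{1}{2} d_G(v) \ge \lfloor d_G(v)/2\rfloor = l(v) + m$ and $\tfrac{1}{2} d_G(A) \ge m$. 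Summing yields $e_G(P) \ge mr + \sum_{\{v\} \in P} l(v)$.

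Deleting $E(M_0)$ removes only cross-edges from $e_G(P)$, since $M_0$ is loopless, and there are at most $|E(M_0)| \le m$ such edges. Hence
$$e_{G'}(P) \ge e_G(P) - m \ge m(r-1) + \sum_{\{v\} \in P} l(v),$$
which is exactly the inequality demanded by Lemma~\ref{lem:criterion}, so $G'$ is $(m,l)$-partition-connected and the main statement follows.

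For the ``furthermore'' with $l(z) = \lceil d_G(z)/2\rceil - m$, the only change is when $\{z\} \in P$ and $d_G(z)$ is odd; in that case $\tfrac{1}{2} d_G(z) = \lceil d_G(z)/2\rceil - \tfrac{1}{2} = l(z) + m - \tfrac{1}{2}$, so the summation gives $e_G(P) \ge mr + \sum_{\{v\} \in P} l(v) - \tfrac{1}{2}$. Because both sides of the desired inequality are integers, this half-integer deficit is absorbed, and the argument goes through unchanged. The main subtlety I expect to handle carefully is the loop bookkeeping in the identity for $e_G(P)$, owing to the asymmetric role of loops at singletons in Lemma~\ref{lem:criterion}; once that is set up, the remaining steps are routine arithmetic.
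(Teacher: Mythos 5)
Your proposal is correct and follows essentially the same route as the paper: delete $E(M_0)$, verify the criterion of Lemma~\ref{lem:criterion} for the resulting graph by the same degree-sum count (losing at most $|E(M_0)|\le m$ cross-edges), and invoke Lemma~\ref{lem:M}. Your explicit treatment of the loop bookkeeping and of the half-integer slack needed for the $\lceil d_G(z)/2\rceil$ refinement is a welcome clarification of steps the paper leaves implicit.
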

\begin{proof}
{Let $G_0=G\setminus E(M_0)$.
 Let $P$ be a partition of $V(G_0)$. 
Since $G$ is $2m$-edge-connected, we must have
 $$e_{G_0}(P)= e_{G}(P)-e_{M_0}(P)\ge
 \sum_{X\in P}\frac{1}{2}d_{G}(X)-m\ge
 m|P|-m+\sum_{\{v\}\in P}(\frac{1}{2}d_{G}(v)-m).$$
 This implies that $e_{G_0}(P)\ge m(|P|-1)+\sum_{\{v\}\in P}l(v)$, where $l(z)=\lceil d_G(z)/2\rceil-m$ and 
$l(v)=\lfloor d_G(v)/2\rfloor-m$ for each vertex $v$ with $v\neq z$.
By Lemma~\ref{lem:criterion}, the graph $G$ must be $(m,l)$-partition-connected. 
Thus by Lemma~\ref{lem:criterion}, the graph $G_0$ has an $m$-tree-connected factor $H$ containing $M$ such that the complement of it is $(0,l)$-partition-connected.
}\end{proof}
\begin{cor}\label{cor:orientation}
{Let $G$ be a graph with two edge-disjoint loopless factors $M$ and $M_0$ satisfying $\Delta(M)\le m$ and $|E(M_0)|\le m$. 
 For each vertex $v$, let $r(v)$ be a nonnegative integer satisfying $\sum_{v\in V(G)}r(v)=m-|E(M_0)|$.
If $G$ is $2m$-edge-connected, then it has an $m$-tree-connected factor $F$ including $M$ excluding $M_0$ 
and a given pre-orientation of $M_0$ can be extended to an orientation of $G$ 
 such that for each vertex $v$, $d^+_{G}(v)\le \lceil d_{G}(v)/2\rceil$, and 
$$d^-_{F}(v)= m-r(v)-d^-_{M_0}(v).$$
Furthermore, for an arbitrary given vertex $z$, we can have $d^+_{G}(z)\le \lfloor d_{G}(z)/2\rfloor$.
}\end{cor}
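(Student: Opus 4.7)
The plan is to strengthen Theorem~\ref{thm:basic:MM0} at the level of the auxiliary function $l$ so that the resulting $(0,l)$-partition-connectedness of the complement directly absorbs the in-degree constraint imposed on $F$. The desired orientation of $G$ will then be assembled from three pieces: the given pre-orientation of $M_0$, an orientation of $F$ with the prescribed in-degrees, and an orientation of the residual subgraph $G\setminus E(F\cup M_0)$ coming from the partition-connectedness.

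I would begin by setting $l(v)=\lfloor d_G(v)/2\rfloor-m+r(v)$ for every $v\neq z$, and $l(z)=\lceil d_G(z)/2\rceil-m+r(z)$. The first step is to verify that $G_0:=G\setminus E(M_0)$ is $(m,l)$-partition-connected via Lemma~\ref{lem:criterion}. For an arbitrary partition $P$ of $V(G)$ with $s$ singleton parts, the $2m$-edge-connectedness of $G$ together with $e_{M_0}(P)\le |E(M_0)|$ yields
\[
e_{G_0}(P)\ge m(|P|-s)+\tfrac{1}{2}\sum_{\{v\}\in P}d_G(v)-|E(M_0)|,
\]
while the required right-hand side $m(|P|-1)+\sum_{\{v\}\in P}l(v)$, in view of $\sum_{v}r(v)=m-|E(M_0)|$, is at most the same expression after replacing $\tfrac{1}{2}d_G(v)$ by $\lfloor d_G(v)/2\rfloor$ and adding $\lceil d_G(z)/2\rceil-\lfloor d_G(z)/2\rfloor\in\{0,1\}$ when $\{z\}\in P$; any remaining discrepancy is at most a half and is absorbed by integrality. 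Lemma~\ref{lem:M} then produces an $m$-tree-connected factor of $G_0$ containing $M$ whose complement in $G_0$ is $(0,l)$-partition-connected. Since $\Delta(M)\le m$, the Nash-Williams arboricity theorem shows that $M$ is $m$-forest-decomposable, so a standard matroid union argument lets us take this factor to be a union $F$ of exactly $m$ edge-disjoint spanning trees still containing $M$; pushing any discarded edges into the complement only enlarges it and preserves the $(0,l)$-partition-connectedness.

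It remains to orient the three pieces compatibly. Reversing the orientation furnished by the $(0,l)$-partition-connectedness supplies an orientation of $G\setminus E(F\cup M_0)$ with $d^-(v)\ge l(v)$. For $F$, define $g(v)=m-r(v)-d^-_{M_0}(v)$; this is nonnegative, since $r(v)+d^-_{M_0}(v)\le (m-|E(M_0)|)+|E(M_0)|=m$, and it satisfies $\sum_{v}g(v)=m(n-1)=|E(F)|$. Because $F$ is a union of $m$ spanning trees, $e_F(S)\le m(|S|-1)\le \sum_{v\in S}g(v)$ for every $S\subseteq V(G)$, so Hakimi's orientation theorem produces an orientation of $F$ with $d^-_F(v)=g(v)$. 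Combining these with the pre-orientation of $M_0$, one obtains
\[
d^-_G(v)=d^-_{M_0}(v)+g(v)+d^-_{G\setminus E(F\cup M_0)}(v)\ge m-r(v)+l(v),
\]
which evaluates to $\lfloor d_G(v)/2\rfloor$ for $v\neq z$ and to $\lceil d_G(z)/2\rceil$ for $v=z$; equivalently, $d^+_G(v)\le\lceil d_G(v)/2\rceil$ and $d^+_G(z)\le\lfloor d_G(z)/2\rfloor$. The main obstacle is the partition-connectedness verification, where one must show that the extra demand $\sum_{v}r(v)$ is precisely balanced by the slack $m-|E(M_0)|$ released by removing $M_0$; this is exactly why the hypothesis $\sum_{v}r(v)=m-|E(M_0)|$ is built into the statement.
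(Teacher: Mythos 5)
Your proof is correct, but it reaches the conclusion by a different mechanism than the paper. The paper keeps the demand function fixed at $l(v)=\lfloor d_G(v)/2\rfloor-m$ and instead spends the budget $\sum_v r(v)=m-|E(M_0)|$ on an auxiliary oriented loopless factor $M_0'$ with $d^-_{M_0'}(v)=r(v)$, which it excludes alongside $M_0$ by invoking Theorem~\ref{thm:basic:MM0} as a black box (the hypothesis $|E(M_0\cup M_0')|\le m$ is exactly what makes that invocation legal); the final in-degree count then collects $r(v)$ from $M_0'$ rather than from $l$. You instead fold $r$ directly into the demand, taking $l(v)=\lfloor d_G(v)/2\rfloor-m+r(v)$, and re-run the counting argument of Theorem~\ref{thm:basic:MM0} against Lemma~\ref{lem:criterion}; your verification is sound, since the worst-case deficit is $1/2$ (a singleton $\{z\}$ of odd degree with $\sum_{\{v\}\in P}r(v)=m-|E(M_0)|$) and integrality of $e_{G_0}(P)$ closes it, exactly as in the paper's own proof of Theorem~\ref{thm:basic:MM0}. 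The two routes are equivalent in strength, but yours has the advantage of sidestepping the selection of $M_0'$ with prescribed in-degrees inside $G\setminus E(M\cup M_0)$, a step the paper asserts without justification and which actually needs a small Hall-type argument; the cost is that you cannot cite Theorem~\ref{thm:basic:MM0} directly and must redo its partition count. Your explicit verification of the in-degree-prescribed orientation of $F$ via $e_F(S)\le m(|S|-1)\le\sum_{v\in S}g(v)$ also supplies the detail behind the paper's ``it is not hard to check.''
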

\begin{proof}
{Since $G$ is $2m$-edge-connected, one can select a loopless factor $M_0'$ of $G\setminus E(M \cup M_0)$ with a given orientation such that for each vertex $v$, $d^-_{M_0'}(v)=r(v)$.
By Theorem~\ref{thm:basic:MM0}, the graph $G$ 
has an $m$-tree-connected factor $F$ including $M$ excluding $M_0\cup M'_0$ such that $G\setminus E(F\cup M_0\cup M'_0)$ is $(0,l)$-partition-connected, where for each vertex $v$, $l(v)= \lfloor d_{G}(v)/2\rfloor-m$.
Furthermore, we can have $l(z)=\lceil d_{G}(z)/2\rceil -m$.
We may assume that $F$ is minimally $m$-tree-connected and so it is the union of $m$ edge-disjoint spanning trees.
Thus it is not hard to check that there is an orientation for $F$ such that for each vertex $v$,
$d^-_{F}(v)= m-r(v)-d^-_{M_0}(v)$, since $\sum_{v\in V(G)}(r(v)+d^-_{M_0}(v))=m$.
Since $G\setminus E(F\cup M_0 \cup M'_0)$ is $(0,l)$-partition-connected, it has an orientation such that the in-degree of each vertex $v$ is at least $l(v)$. Now, it is enough to induce these orientations to the edges of $G$ to complete the proof.
Note that for each vertex $v$, $d^-_G(v)\ge l(v)+d^-_F(v)+d^-_{M_0\cup M'_0}(v)\ge \lfloor d_{G}(v)/2\rfloor$, and also $d^-_G(z)\ge \lceil d_{G}(z)/2\rceil$.
}\end{proof}
\section{Factors with given dense lists on degrees}
\label{sec:lists}
\subsection{Preliminary results}
In this section, we consider the existence of connected factors with given dense lists on degrees in graphs.
For this purpose, we need the following result on the existence of list factors in directed graphs.
\begin{thm}{\rm (\cite{Frank-Lau-Szabo-2008, Shirazi-Verstraete-2008})}\label{thm:list:base-version}
{Let $G$ be a directed graph and for each vertex $v$, let
$ L(v)\subseteq \{0,\ldots, d_G(v)\} $.
Then $G$ has an $L$-factor, if for each vertex $v$,
$|L(v)| \ge d_G^+(v)+1$.
}\end{thm}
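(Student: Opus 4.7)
The plan is to prove this result via Alon's Combinatorial Nullstellensatz. To each edge $e$ of $G$, associate an indicator variable $x_e$ so that a $\{0,1\}$-assignment encodes a spanning subgraph $H$ with $d_H(v)=\sum_{e\ni v}x_e$; we seek an assignment that realizes $d_H(v)\in L(v)$ for every vertex $v$.

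Since shrinking the lists preserves the hypothesis, I would first replace each $L(v)$ by an arbitrary subset of size exactly $d_G^+(v)+1$. Then define the ``forbidden-values'' polynomial
$$P(\mathbf{x})\;=\;\prod_{v\in V(G)}\ \prod_{\ell\in\{0,\ldots,d_G(v)\}\setminus L(v)}\Bigl(\sum_{e\ni v}x_e-\ell\Bigr).$$
Any $\mathbf{x}\in\{0,1\}^{E(G)}$ with $P(\mathbf{x})\ne 0$ automatically satisfies $d_H(v)\notin\{0,\ldots,d_G(v)\}\setminus L(v)$ for every $v$, hence $d_H(v)\in L(v)$, so the corresponding subgraph is an $L$-factor. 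With the tight choice of $L$, the total degree of $P$ equals $\sum_v(d_G(v)+1-|L(v)|)=\sum_v d_G^-(v)=|E(G)|$. Thus Combinatorial Nullstellensatz with $S_e=\{0,1\}$ will produce such an $\mathbf{x}$ as soon as the coefficient of the monomial $\prod_{e\in E(G)}x_e$ in $P$ is nonzero.

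The key step, and the main obstacle, is to lower-bound this coefficient. A contribution to it comes from choosing, in each of the $\sum_v d_G^-(v)=|E(G)|$ linear factors, an $x_e$-term (never the constant $-\ell$), so that each edge is selected exactly once. I would encode such a choice as a pair $(\phi,(\sigma_v)_{v\in V(G)})$, where $\phi:E(G)\to V(G)$ sends each edge to one of its endpoints with $|\phi^{-1}(v)|=d_G^-(v)$ for every $v$, and $\sigma_v$ is a bijection between $\phi^{-1}(v)$ and the $d_G^-(v)$ factors at $v$. Because every edge-coefficient inside the factors is $+1$, each such pair contributes exactly $+1$, so the coefficient equals $N\cdot\prod_v d_G^-(v)!$, where $N$ counts admissible $\phi$. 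The natural assignment $\phi(e)=\mathrm{head}(e)$ is admissible, whence $N\ge 1$ and the coefficient is strictly positive.

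Once this combinatorial identification of the coefficient is in place, Combinatorial Nullstellensatz supplies an $\mathbf{x}\in\{0,1\}^{E(G)}$ with $P(\mathbf{x})\ne 0$, and the associated subgraph is the desired $L$-factor. The delicate point throughout is the degree accounting: the hypothesis $|L(v)|\ge d_G^+(v)+1$ is precisely what makes the degree of $P$ equal to $|E(G)|$, so that $\prod_e x_e$ is a top-degree monomial and Nullstellensatz is applicable with the two-element sets $\{0,1\}$.
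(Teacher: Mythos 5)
The paper does not prove this statement at all; it is imported as a black box from the two cited references, so there is no internal proof to compare against. Your argument is a correct, self-contained reconstruction of the Combinatorial Nullstellensatz proof in the style of Shirazi--Verstra\"ete, adapted to the directed setting where the list-size bound is $d_G^+(v)+1$ rather than $\lceil d_G(v)/2\rceil+1$ (the other cited source, Frank--Lau--Szab\'o, gives a non-algebraic proof). The key points all check out: after shrinking each list to size exactly $d_G^+(v)+1$, the vertex $v$ contributes $d_G^-(v)$ linear factors, so the relevant monomial $\prod_e x_e$ sits in degree $\sum_v d_G^-(v)=|E(G)|$ with exponents $1<|\{0,1\}|$; every choice selecting each $x_e$ exactly once is encoded by an admissible assignment $\phi$ together with bijections at each vertex, all contributions have the same positive sign, and $\phi(e)=\mathrm{head}(e)$ witnesses $N\ge 1$; and a nonvanishing point in $\{0,1\}^{E(G)}$ forces $d_H(v)\in L(v)$ because $d_H(v)$ automatically lies in $\{0,\ldots,d_G(v)\}$. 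Two small points of hygiene: since this paper allows loops and multiple edges, the degree sum at $v$ should be written $\sum_e m_v(e)x_e$ with $m_v(e)$ the multiplicity of $v$ in $e$ (a loop then contributes coefficient $2$ rather than $1$ to each selection, which only makes the final coefficient larger and leaves the argument intact); and your assertion that $\deg P$ \emph{equals} $|E(G)|$ is only an upper bound a priori, but the exhibited nonzero coefficient in that degree certifies equality, which is all the Nullstellensatz needs.
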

Before stating the main result, let us derive the following generalization of Theorem~\ref{thm:list:base-version}. 
\begin{cor}\label{cor:list:F-F0} 
{Let $G$ be a directed graph with two edge-disjoint factors $F$ and $F_0$.
For each vertex~$v$, take $s(v)$ and $s_0(v)$ to be two integers with $s(v)\le d_{F}(v)$ and $s_0(v)\le d_{F_0}(v)$, and let
$ L(v)\subseteq \{s(v),\ldots, d_G(v)-s_0(v)\} $. 
Then $G$ has an $L$-factor $H$ including $F$ excluding $F_0$, if for each vertex~$v$,
$$|L(v)| \ge d_{G}^+(v)+1+d^-_{F}(v)+d^-_{F_0}(v)-s(v)-s_0(v).$$
}\end{cor}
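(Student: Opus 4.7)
The plan is to reduce directly to Theorem~\ref{thm:list:base-version} by contracting the ``forced'' and ``forbidden'' factors into a shift of the lists. Set $G' = G \setminus E(F \cup F_0)$ with the orientation inherited from $G$. A factor $H$ of $G$ that includes $F$ and excludes $F_0$ is in bijection with a factor $H'$ of $G'$ via $H' = H \setminus E(F)$, and the degree identity $d_H(v) = d_{H'}(v) + d_F(v)$ shows that $d_H(v) \in L(v)$ is equivalent to $d_{H'}(v) \in L'(v)$, where
$$L'(v) \;=\; \bigl(L(v) - d_F(v)\bigr) \cap \{0,\ldots, d_{G'}(v)\}.$$
So it suffices to produce an $L'$-factor in the directed graph $G'$.

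Next I would verify the hypothesis of Theorem~\ref{thm:list:base-version} for $G'$ and $L'$, namely $|L'(v)| \ge d_{G'}^+(v) + 1$ for every vertex $v$. Since $L(v) \subseteq \{s(v),\ldots,d_G(v)-s_0(v)\}$ and $s(v)\le d_F(v)$, $s_0(v)\le d_{F_0}(v)$, the elements of $L(v) - d_F(v)$ that fall below $0$ number at most $d_F(v)-s(v)$, and the elements that exceed $d_{G'}(v) = d_G(v)-d_F(v)-d_{F_0}(v)$ number at most $d_{F_0}(v)-s_0(v)$. Therefore
$$|L'(v)| \;\ge\; |L(v)| - \bigl(d_F(v)-s(v)\bigr) - \bigl(d_{F_0}(v)-s_0(v)\bigr).$$
Substituting the assumed lower bound on $|L(v)|$ and using $d_F(v) = d_F^+(v)+d_F^-(v)$, $d_{F_0}(v) = d_{F_0}^+(v)+d_{F_0}^-(v)$ together with $d_{G'}^+(v) = d_G^+(v) - d_F^+(v) - d_{F_0}^+(v)$, the terms involving $s,s_0,d_F^-,d_{F_0}^-$ cancel and the right-hand side simplifies to $d_{G'}^+(v)+1$, exactly as required.

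Finally, Theorem~\ref{thm:list:base-version} applied to $G'$ with list $L'$ yields an $L'$-factor $H'$ of $G'$, and $H := H' \cup F$ is the desired factor of $G$: it contains $F$ by construction, it is disjoint from $F_0$ since $H' \subseteq G'$ and $E(G') \cap E(F_0) = \emptyset$, and $d_H(v) = d_{H'}(v) + d_F(v) \in L(v)$ for every $v$. There is no real obstacle here; the only non-trivial step is the degree bookkeeping in the second paragraph, which is a routine but somewhat delicate cancellation that is the whole reason the particular right-hand side $d_G^+(v)+1+d_F^-(v)+d_{F_0}^-(v)-s(v)-s_0(v)$ appears in the hypothesis.
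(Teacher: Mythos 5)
Your argument is correct and is essentially identical to the paper's proof: both pass to $G'=G\setminus E(F\cup F_0)$, shift the lists by $d_F(v)$ and truncate to $\{0,\ldots,d_{G'}(v)\}$, verify $|L'(v)|\ge d_{G'}^+(v)+1$ by the same counting of lost elements, and apply Theorem~\ref{thm:list:base-version} before adding back $F$. No issues.
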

%
\begin{proof}
{Put $G'=G\setminus E(F\cup F_0)$.
For each vertex $v$, define 
 $L'(v)=\{x-d_{F}(v):\; x\in L(v) \text{ and }d_{F}(v) \le x\le d_G(v)-d_{F_0}(v) \}$ so that 
$L'(v)\subseteq \{0,\ldots, d_{G'}(v)\}.$ 
According this definition, 
$|L'(v)| \ge |L(v)|-(d_{F}(v)-s(v))-(d_{F_0}(v)-s_0(v))$ which implies that 
 $|L'(v)| \ge d^+_{G}(v)+1- d^+_{F_0}(v)- d^+_{F}(v) = d^+_{G'}(v)+1.$
Therefore, by Theorem~\ref{thm:list:base-version},
the graph $G'$ has a factor $H'$ such that for each vertex $v$, $d_{H'}(v)\in L'(v)$. 
Put $H=
H'\cup F$.
For each vertex $v$, we must have $d_{H}(v)=d_{H'}(v)+ d_{F}(v)\in L(v)$.
Hence the graph $H$ is the desired factor.
}\end{proof}
%
\subsection{Graphs with tree-connectivity at least $m+m_0$}
The following theorem gives a sufficient partition-connectivity condition for the existence of tree-connected factors 
with given dense lists on degrees.
\begin{thm}\label{thm:list:partition}
{Let $G$ be an $(m+m_0, l_0)$-partition-connected graph where $l_0$ is a nonnegative integer-valued function on $V(G)$.
For each vertex $v$, let $L(v)\subseteq \{m,\ldots, d_G(v)-m_0\}$.
Then $G$ has an $m$-tree-connected $L$-factor $H$ such that its complement is $m_0$-tree-connected, 
if for each vertex $v$, 
$$|L(v)|\ge d_G(v)+1-l_0(v)-m-m_0.$$
When $m_0=0$, $H$ can include an arbitrary given factor with maximum degree at most $m$.
}\end{thm}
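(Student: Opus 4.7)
The plan is to reduce the theorem to an application of Corollary \ref{cor:list:F-F0} after decomposing $G$ into three suitable edge-disjoint factors. Specifically, I will construct factors $F$, $F_0$, and $R$ of $G$ such that $F$ is $m$-tree-connected (and contains the prescribed $M$ in the case $m_0 = 0$), $F_0$ is $m_0$-tree-connected, and $R$ admits an orientation satisfying $d^-_R(v) \geq l_0(v)$ at every vertex $v$.

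To obtain this decomposition in the general case, invoke the definition of $(m+m_0, l_0)$-partition-connectivity to write $G = T \cup R'$, where $T$ is $(m+m_0)$-tree-connected and $R' = G \setminus E(T)$ admits an orientation with $d^+_{R'}(v) \geq l_0(v)$. Choose $m+m_0$ edge-disjoint spanning trees inside $T$, group $m$ of them to form $F$ and the remaining $m_0$ to form $F_0$, and set $R := G \setminus E(F \cup F_0)$; since $R \supseteq R'$, reversing the orientation of $R'$ and orienting the remaining edges of $R$ arbitrarily yields an orientation of $R$ with $d^-_R(v) \geq l_0(v)$. For the special case $m_0 = 0$ with a prescribed $M$ of maximum degree at most $m$, apply Lemma \ref{lem:M} instead to obtain an $m$-tree-connected factor $F \supseteq M$ whose complement $R$ is $(0, l_0)$-partition-connected, and set $F_0 = \emptyset$.

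Now orient $F$ and $F_0$ arbitrarily to extend the orientation to all of $G$. Using the identity $d^+_G(v) + d^-_F(v) + d^-_{F_0}(v) = d_F(v) + d_{F_0}(v) + d^+_R(v)$ together with $d^+_R(v) = d_R(v) - d^-_R(v) \leq d_R(v) - l_0(v)$, the hypothesis $|L(v)| \geq d_G(v) + 1 - l_0(v) - m - m_0$ implies
\[
|L(v)| \,\geq\, d^+_G(v) + 1 + d^-_F(v) + d^-_{F_0}(v) - m - m_0.
\]
Setting $s(v) = m$ and $s_0(v) = m_0$ (valid since $d_F(v) \geq m$ and $d_{F_0}(v) \geq m_0$) and applying Corollary \ref{cor:list:F-F0}, we obtain an $L$-factor $H$ containing $F$ and disjoint from $F_0$; then $H \supseteq F$ is $m$-tree-connected, $G \setminus E(H) \supseteq F_0$ is $m_0$-tree-connected, and $d_H(v) \in L(v)$ for all $v$, as required. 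The only delicate step is the special case $m_0 = 0$ with a prescribed $M$: an arbitrary $(m+m_0)$-tree-connected factor of $G$ cannot in general be split so that the $m$-tree-connected part contains a given $M$, which is precisely why Lemma \ref{lem:M} must be invoked separately in that case.
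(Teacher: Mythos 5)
Your proposal is correct and follows essentially the same route as the paper: decompose $G$ into an $m$-tree-connected factor $F$, an $m_0$-tree-connected factor $F_0$, and a remainder oriented with in-degree at least $l_0$, then apply Corollary~\ref{cor:list:F-F0} with $s=m$ and $s_0=m_0$, invoking Lemma~\ref{lem:M} for the $m_0=0$ case with a prescribed $M$. You are merely more explicit than the paper in deriving the three-way decomposition from the definition of $(m+m_0,l_0)$-partition-connectivity and in verifying the degree identity, but the argument is the same.
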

\begin{proof}
{Decomposed $G$ into 
an $m_0$-tree-connected factor $F_0$, an $m$-tree-connected factor $F$, and 
 a $(0,l_0)$-partition-connected factor $\mathcal{F}$. Consider arbitrary orientations for $F$ and $F_0$, and consider an orientation for $\mathcal{F}$ such that for each vertex $v$, $d^-_{\mathcal{F}}(v)\ge l_0(v)$.
According to Lemma~\ref{lem:M}, for the case $m_0=0$, we may assume that $F$ include an arbitrary given factor of $G$ with maximum degree at most $m$.
Obviously, for each vertex $v$, $d_F(v)\ge m$ and $d_{F_0}(v)\ge m_0$.
Moreover, we also have
$ d_{G}^+(v)+d^-_{F}(v)+d^-_{F_0}(v) =d_G(v)-d^-_{\mathcal{F}}(v)\le d_G(v)-l_0(v)$.
Thus by applying Corollary~\ref{cor:list:F-F0} with $s=m$ and $s_0=m_0$,
 the graph $G$ has an $L$-factor $H$ including $F$ excluding $F_0$. Hence the theorem holds.
}\end{proof}
\subsection{Graphs with edge-connectivity at least $2m+2m_0$}
The following theorem gives a sufficient edge-connectivity condition for the existence of tree-connected factors 
with given dense lists on degrees.
\begin{thm}\label{thm:list:edge-connected}
{Let $G$ be a $(2m+2m_0)$-edge-connected graph.
For each vertex $v$, let $L(v)\subseteq \{m,\ldots, d_G(v)-m_0\}$.
Then $G$ has an $m$-tree-connected $L$-factor $H$ such that its complement is $m_0$-tree-connected, 
if for each vertex $v$, 
$$|L(v)|\ge \lceil\frac{d_G(v)}{2}\rceil+1.$$
}\end{thm}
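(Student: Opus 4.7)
The plan is to reduce Theorem~\ref{thm:list:edge-connected} to its partition-connectivity counterpart, Theorem~\ref{thm:list:partition}. The crucial observation is that $(2m+2m_0)$-edge-connectivity supplies $(m+m_0,l_0)$-partition-connectivity for the choice
$$l_0(v) = \lfloor d_G(v)/2 \rfloor - (m+m_0),$$
which is nonnegative because the edge-connectivity hypothesis forces $d_G(v)\ge 2m+2m_0$ for every vertex $v$.

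To verify this partition-connectivity, I would appeal to Lemma~\ref{lem:criterion}. For an arbitrary partition $P$ of $V(G)$, every part $X\in P$ satisfies $d_G(X)\ge 2m+2m_0$, so counting inter-part edges through $\tfrac12\sum_{X\in P}d_G(X)$ gives
$$e_G(P) \;\ge\; (m+m_0)|P| + \sum_{\{v\}\in P}\Bigl(\tfrac{d_G(v)}{2}-(m+m_0)\Bigr) \;\ge\; (m+m_0)(|P|-1) + \sum_{\{v\}\in P} l_0(v),$$
as required by Lemma~\ref{lem:criterion}. Loops at singleton parts, which Lemma~\ref{lem:criterion} also includes in $e_G(P)$, only strengthen this inequality.

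Having established the partition-connectivity, I would plug it into Theorem~\ref{thm:list:partition} with exactly this $l_0$. The numerical list-size condition demanded there, namely $|L(v)|\ge d_G(v)+1-l_0(v)-(m+m_0)$, simplifies to
$$|L(v)| \;\ge\; d_G(v)+1-\lfloor d_G(v)/2\rfloor \;=\; \lceil d_G(v)/2\rceil +1,$$
which is precisely the hypothesis of Theorem~\ref{thm:list:edge-connected}. Moreover the range constraint $L(v)\subseteq\{m,\dots,d_G(v)-m_0\}$ coincides in both theorems, so Theorem~\ref{thm:list:partition} outputs the required $m$-tree-connected $L$-factor with an $m_0$-tree-connected complement.

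There is essentially no obstacle: the argument is a one-step reduction, and the entire work lies in isolating the correct $l_0$. The only point of care is the standard handling of loops and singleton parts in the definition of $e_G(P)$ used by Lemma~\ref{lem:criterion}, but as noted these contributions only improve the inequality and cause no trouble.
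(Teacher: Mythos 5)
Your proof is correct and follows essentially the same route as the paper: establish that $G$ is $(m+m_0,l_0)$-partition-connected with $l_0(v)=\lfloor d_G(v)/2\rfloor-(m+m_0)$ and then invoke Theorem~\ref{thm:list:partition}. The only cosmetic difference is that the paper obtains the partition-connectivity by citing Theorem~\ref{thm:basic:M}, whereas you verify it directly from Lemma~\ref{lem:criterion}; the counting argument you give is exactly the one already carried out in the proof of Theorem~\ref{thm:basic:MM0}, so nothing new is needed.
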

\begin{proof}
{By Theorem~\ref{thm:basic:M}, 
the graph $G$ is $(m+m_0, l_0)$-partition-connected such that 
for each vertex $v$, 
$l_0(v)=\lfloor d_G(v)/2 \rfloor-m-m_0$.
By Theorem~\ref{thm:list:partition},
the graph $G$ has an $m$-tree-connected $L$-factor $H$ such that its complement is $m_0$-tree-connected.
}\end{proof}
\begin{cor}{\rm (\cite{Shirazi-Verstraete-2008})}
{Let $G$ be a graph and let $L(v)\subseteq \{0,\ldots, d_G(v)\}$ for each vertex $v$.
Then $G$ has an $L$-factor, if for each vertex $v$, 
$|L(v)|\ge \lceil\frac{d_G(v)}{2}\rceil+1.$
}\end{cor}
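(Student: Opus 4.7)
The plan is to obtain this corollary as the $m = m_0 = 0$ specialization of Theorem~\ref{thm:list:edge-connected}. With those values, the hypothesis ``$G$ is $(2m + 2m_0)$-edge-connected'' is vacuous for every graph, the list constraint $L(v) \subseteq \{m, \ldots, d_G(v) - m_0\}$ collapses to $L(v) \subseteq \{0, \ldots, d_G(v)\}$, and the size bound $|L(v)| \ge \lceil d_G(v)/2 \rceil + 1$ is unchanged. The conclusion produces an $L$-factor whose complement is $0$-tree-connected, which is simply the existence of an $L$-factor. So the corollary follows by a single invocation.

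If instead I wanted to give a self-contained argument without quoting Theorem~\ref{thm:list:edge-connected}, I would unroll its proof. First I would observe that for any partition $P$ of $V(G)$,
\[
e_G(P) \;=\; \tfrac{1}{2}\sum_{X \in P} d_G(X) \;\ge\; \sum_{\{v\} \in P} \bigl\lfloor d_G(v)/2 \bigr\rfloor,
\]
so by Lemma~\ref{lem:criterion} the graph $G$ is $(0, l_0)$-partition-connected for $l_0(v) = \lfloor d_G(v)/2 \rfloor$. I would then apply Theorem~\ref{thm:list:partition} with $m = m_0 = 0$; its list-size hypothesis reads $|L(v)| \ge d_G(v) + 1 - l_0(v) = \lceil d_G(v)/2 \rceil + 1$, which is exactly the assumption, and the theorem yields the desired $L$-factor. (This route mirrors the single-application use of Corollary~\ref{cor:list:F-F0} with $F$ and $F_0$ both empty and $s = s_0 = 0$, once one has fixed an orientation of $G$ with $d_G^+(v) \le \lceil d_G(v)/2 \rceil$, an orientation which exists because $G$ is $(0, l_0)$-partition-connected.)

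There is no real obstacle here. The only care needed is to confirm that each inequality aligns at the boundary $m = m_0 = 0$, that the $0$-tree-connectivity requirement on the complement in the parent theorem is vacuous, and that the ambient edge-connectivity hypothesis degenerates to a tautology. Given the tower of lemmas already developed in this section, the corollary is essentially obtained by substitution.
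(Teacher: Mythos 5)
Your proposal is correct and coincides with the paper's intended derivation: the paper states this corollary immediately after Theorem~\ref{thm:list:edge-connected} with no separate proof, precisely because it is the $m=m_0=0$ specialization you describe, where the edge-connectivity hypothesis and the $0$-tree-connectivity of $H$ and its complement all become vacuous. Your boundary checks (list range, size bound, and the unrolled route through Theorem~\ref{thm:list:partition}) all align, so nothing further is needed.
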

A strengthened version for Theorem 4 in~\cite{AHO} is given in the following result.
\begin{thm}\label{cor:list-tree-connected} 
{Let $G$ be a $2m$-edge-connected graph with two edge-disjoint loopless factors $M$ and $M_0$ satisfying $\Delta(M)\le m$ and $|E(M_0)|\le m$. Assume that $M_0$ is directed.
 For each vertex $v$, let $L(v)\subseteq \{m,\ldots, d_G(v)-d_{M_0}(v)\}$ and let $r(v)$ be a nonnegative integer satisfying $\sum_{v\in V(G)}r(v)=m-|E(M_0)|$.
 Then $G$ admits an $m$-tree-connected $L$-factor $H$ including $M$ excluding $M_0$, if for each vertex~$v$, 
$$|L(v)|\ge \lfloor\frac{d_G(v)}{2}\rfloor +1-r(v)-d^+_{M_0}(v).$$
Furthermore, for an arbitrary given vertex $z$, the lower bound can be reduced to $\lfloor\frac{d_G(z)}{2}\rfloor +1-d^+_{M_0}(z)$.
}\end{thm}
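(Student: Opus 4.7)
The plan is to combine Corollary \ref{cor:orientation} with Corollary \ref{cor:list:F-F0}: first produce an $m$-tree-connected factor $F$ that contains $M$ and is edge-disjoint from $M_0$, together with a controlled orientation of $G$, and then extend $F$ to the desired $L$-factor by realising the list values on the remaining edges.

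Concretely, I would first invoke Corollary \ref{cor:orientation} with the given $M$, $M_0$, and $r$ (and with the optional distinguished vertex $z$ when targeting the ``Furthermore'' statement). This produces an $m$-tree-connected factor $F$ of $G$ containing $M$ and disjoint from $M_0$, together with an extension of the prescribed orientation of $M_0$ to an orientation of $G$ in which $d_G^+(v) \le \lceil d_G(v)/2 \rceil$ for every vertex $v$ (with $d_G^+(z) \le \lfloor d_G(z)/2 \rfloor$ at the chosen $z$ in the ``Furthermore'' case), and $d_F^-(v) = m - r(v) - d_{M_0}^-(v)$ for every $v$.

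Next, I would apply Corollary \ref{cor:list:F-F0} to the now-oriented $G$, taking $F$ and $M_0$ as the two edge-disjoint factors and choosing $s(v) = m$ and $s_0(v) = d_{M_0}(v)$, which makes the allowed range $\{s(v), \ldots, d_G(v) - s_0(v)\}$ agree with the hypothesis $L(v) \subseteq \{m, \ldots, d_G(v) - d_{M_0}(v)\}$. The resulting $L$-factor $H$ then automatically includes $F$ (hence $M$) and excludes $M_0$. The sufficient condition
\[
|L(v)| \ge d_G^+(v) + 1 + d_F^-(v) + d_{M_0}^-(v) - m - d_{M_0}(v)
\]
from Corollary \ref{cor:list:F-F0} simplifies, after substituting the identity $d_F^-(v) = m - r(v) - d_{M_0}^-(v)$ and cancelling the two copies of $d_{M_0}^-(v)$ and of $m$, to a bound involving only $d_G^+(v)$, $r(v)$ and $d_{M_0}^+(v)$; replacing $d_G^+(v)$ by its upper bound $\lceil d_G(v)/2 \rceil$ in general, and by $\lfloor d_G(z)/2 \rfloor$ at $z$ via the ``Furthermore'' of Corollary \ref{cor:orientation}, then matches the hypothesis of the present theorem.

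The main delicate point is the bookkeeping between the in- and out-degree contributions of $M_0$ together with the ceiling/floor refinement: the flexibility in choosing $r$ (subject only to $\sum_v r(v) = m - |E(M_0)|$) absorbs the $r$-term in the theorem's lower bound, while the ``Furthermore'' clause of Corollary \ref{cor:orientation} is precisely what supplies the sharper out-degree bound at $z$ needed for the corresponding refinement here. Once those pieces line up, the verification is a direct substitution into the inequality above.
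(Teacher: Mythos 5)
Your proposal is correct and is essentially identical to the paper's proof: it first invokes Corollary~\ref{cor:orientation} to obtain the $m$-tree-connected factor $F$ (including $M$, excluding $M_0$) together with the orientation satisfying $d^-_F(v)=m-r(v)-d^-_{M_0}(v)$ and $d^+_G(v)\le\lceil d_G(v)/2\rceil$ (with $d^+_G(z)\le\lfloor d_G(z)/2\rfloor$ for the refinement at $z$), and then applies Corollary~\ref{cor:list:F-F0} with $s(v)=m$ and $s_0(v)=d_{M_0}(v)$, exactly as the paper does. The only minor quibble is that the cancellation actually leaves the bound $d^+_G(v)+1-r(v)-d_{M_0}(v)$ rather than one involving $d^+_{M_0}(v)$ alone, but since $d_{M_0}(v)\ge d^+_{M_0}(v)$ this only weakens the condition to be verified, so the substitution goes through as you describe.
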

\begin{proof}
{Since $G$ is $2m$-edge-connected, by applying Corollary~\ref{cor:orientation},
$G$ has an $m$-tree-connected factor $F$ including $M$ excluding $M_0$ 
and the pre-orientation of $M_0$ can be extended to an orientation of $G$ 
 such that for each vertex $v$, $d^+_{G}(v)\le \lceil d_{G}(v)/2\rceil$, and $d^-_{F}(v)= m-r(v)-d^-_{M_0}(v)$.
Furthermore, we can have $d^+_{G}(z)\le \lfloor d_{G}(z)/2\rfloor$.
By Corollary~\ref{cor:list:F-F0} with setting $s(v)=m$ and $s_0(v)=d_{M_0}(v)$,
the graph $G$ must have an $L$-factor $H$ including $F$ excluding $M_0$.
Hence $H$ is the desired factor of $G$.
}\end{proof}

\section{Factors with bounded degrees}
\label{sec:bounded}
\subsection{Preliminary results}
Our aim in this section is to provide a fundamental result to prove the main theorems of this paper.
Before stating the result, let us recall a theorem due to Lov\'{a}sz (1970) who gave the following criterion for the existence of $(g,f)$-factors in general graphs.
\begin{thm}{\rm (\cite{Lovasz-1970})}\label{thm:Lovasz}
{Let $G$ be a graph and let $g$ and $f$ be two integer-valued functions on $V(G)$ with $g<f$.
Then $G$ has a $(g,f)$-factor if and only if for any two disjoint subsets $A$ and $B$ of $V(G)$, 
$$0\le \sum_{v\in A}f(v)+\sum_{v\in B}(d_G(v)-g(v))-d_G(A,B).$$
Furthermore, the theorem holds when $g(z)=f(z)$ for at most one vertex $z$.
}\end{thm}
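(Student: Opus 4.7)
The plan is to handle the two directions separately, with necessity by edge-counting and sufficiency by reducing to the perfect-matching problem. For necessity, suppose $H$ is a $(g,f)$-factor of $G$ and let $A,B\subseteq V(G)$ be disjoint. Since $d_H(v)\le f(v)$ for $v\in A$ and $d_G(v)-d_H(v)\le d_G(v)-g(v)$ for $v\in B$, we have
\[
\sum_{v\in A}f(v)+\sum_{v\in B}(d_G(v)-g(v))\;\ge\;\sum_{v\in A}d_H(v)+\sum_{v\in B}(d_G(v)-d_H(v))\;\ge\; d_G(A,B),
\]
because every edge of $G$ with one end in $A$ and the other in $B$ is counted exactly once on the right (by $A$ if it lies in $H$, by $B$ otherwise). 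Rearranging gives the desired nonnegativity.

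For sufficiency, I would follow Tutte's classical route of reducing the existence of a $(g,f)$-factor of $G$ to the existence of a perfect matching in an auxiliary graph $G^{\ast}$. This graph is obtained by subdividing every edge of $G$ and then attaching, at each vertex $v\in V(G)$, a gadget $T_v$ designed so that in any perfect matching of $G^{\ast}$, the number of subdivision vertices not matched into $T_v$ (equivalently, matched across the subdivided edges to the other side) lies in $[g(v),f(v)]$ and records the degree of $v$ in the corresponding $(g,f)$-factor. Applying Tutte's $1$-factor theorem to $G^{\ast}$, either a perfect matching exists and we read off the desired factor, or there is a set $T\subseteq V(G^{\ast})$ with $o(G^{\ast}-T)>|T|$. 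In the latter case I would translate $T$ back to $V(G)$ by letting $A$ be the set of vertices $v$ whose gadget $T_v$ contributes to the ``$f$-side'' of $T$ and $B$ the set of vertices whose gadget contributes to the ``$(d_G-g)$-side'', and verify that the parity deficit $o(G^{\ast}-T)-|T|$ equals $d_G(A,B)-\sum_{v\in A}f(v)-\sum_{v\in B}(d_G(v)-g(v))$, contradicting the hypothesis.

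The ``furthermore'' clause, allowing $g(z)=f(z)$ at a single vertex $z$, is accommodated by modifying only the gadget $T_z$ so that the number of unmatched subdivision edges at $z$ is forced to equal the single admissible value $g(z)=f(z)$; the parity argument behind Tutte's theorem tolerates one such prescribed vertex, so the proof goes through. The main obstacle, and where I would spend the most effort, is in designing the gadgets $T_v$ precisely and in the bookkeeping that matches the Tutte--Berge obstruction in $G^{\ast}$ with the Lov\'asz inequality in $G$: one must verify that edges between $A$ and $B$, vertices outside $A\cup B$, and the parities of the odd components of $G^{\ast}-T$ all align so that the violated Tutte condition reproduces, up to sign, exactly the violated $(g,f)$-factor condition.
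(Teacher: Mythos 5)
The paper does not prove this statement at all: it is imported verbatim from Lov\'asz's 1970 paper (hence the citation in the theorem header), so there is no in-paper argument to compare yours against. Judged on its own terms, your necessity direction is complete and correct --- the edge count $\sum_{v\in A}d_H(v)+\sum_{v\in B}(d_G(v)-d_H(v))\ge d_G(A,B)$ is exactly right, and the hypothesis $g<f$ is not even needed there.

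The sufficiency direction, however, is a plan rather than a proof, and the gap is not merely one of missing detail. First, the Tutte gadget you invoke is the classical reduction for \emph{exact} $f$-factors; to allow $d_H(v)$ to range over the interval $[g(v),f(v)]$ the gadget at $v$ must contain a slack part (typically $f(v)-g(v)$ extra vertices joined appropriately to the subdivision vertices), and you never specify it, even though the entire bookkeeping in the Tutte--Berge obstruction depends on its exact shape. Second, and more importantly, the actual mathematical content of this particular statement is that the general Lov\'asz criterion --- which contains an additional component-counting term $q(A,B)$ enumerating certain odd components of $G-(A\cup B)$ on which $g\equiv f$ --- collapses to the displayed inequality precisely because $g<f$ holds at all but at most one vertex, which forces $q(A,B)\le 1$ and lets it be absorbed. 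Your outline never identifies where the hypothesis $g<f$ enters, and the identity you assert between the matching deficit $o(G^{\ast}-T)-|T|$ and $d_G(A,B)-\sum_{v\in A}f(v)-\sum_{v\in B}(d_G(v)-g(v))$ cannot hold as stated without accounting for those odd components; that accounting is exactly the step you defer, and it is the step that distinguishes this theorem from the trivial half. As written, the sufficiency argument would not survive being carried out.
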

Before stating the main result, let us derive the following generalization of Theorem~\ref{thm:Lovasz}.
\begin{cor}\label{cor:base}
{Let $G$ be a graph and let $g$ and $f$ be two integer-valued functions on $V(G)$ with $g< f$.
 Let $F$ and $F_0$ be two edge-disjoint factors of $G$.
Then $G$ has a $(g,f)$-factor $H$ including $F$ excluding $F_0$ if and only if for any two disjoint subsets $A$ and $B$ of $V(G)$, 
$$\sum_{v\in A}d_F(v)-d_F(A,B)+\sum_{v\in B}d_{F_0}(v)-d_{F_0}(A,B)\le 
 \sum_{v\in A}f(v)+\sum_{v\in B}(d_G(v)-g(v))-d_G(A,B).$$
Furthermore, the theorem holds when $g(z)=f(z)$ for at most one vertex $z$.
}\end{cor}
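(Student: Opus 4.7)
The plan is a direct reduction to Theorem~\ref{thm:Lovasz}. Set $G' := G \setminus E(F \cup F_0)$ on the same vertex set, and for each vertex $v$ define the shifted bounds $g'(v) := g(v) - d_F(v)$ and $f'(v) := f(v) - d_F(v)$. Since $d_F$ is integer-valued, the inequality $g' < f'$ holds wherever $g < f$, and $g'(z) = f'(z)$ occurs exactly when $g(z) = f(z)$, so the side condition in the ``furthermore'' clause transfers faithfully between the two pairs.

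The crucial observation is a bijection between the factors of interest. For any subgraph $H$ of $G$ with $E(F) \subseteq E(H)$ and $E(H) \cap E(F_0) = \emptyset$, the subgraph $H' := H \setminus E(F)$ is a subgraph of $G'$ with $d_{H'}(v) = d_H(v) - d_F(v)$, and conversely any subgraph $H'$ of $G'$ extends to such an $H$ by setting $H := H' \cup F$. Under this correspondence, the bounds $g(v) \le d_H(v) \le f(v)$ are equivalent to $g'(v) \le d_{H'}(v) \le f'(v)$. Hence it suffices to show that a $(g', f')$-factor of $G'$ exists and then apply Lovász.

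Theorem~\ref{thm:Lovasz} applied to $(G', g', f')$ tells us such a factor exists iff for every pair of disjoint subsets $A, B \subseteq V(G)$,
$$0 \le \sum_{v\in A} f'(v) + \sum_{v\in B}\bigl(d_{G'}(v) - g'(v)\bigr) - d_{G'}(A,B).$$
Using $f'(v) = f(v) - d_F(v)$, the telescoping identity $d_{G'}(v) - g'(v) = d_G(v) - g(v) - d_{F_0}(v)$ (the $d_F(v)$ contributions cancel), and $d_{G'}(A,B) = d_G(A,B) - d_F(A,B) - d_{F_0}(A,B)$, a direct rearrangement moves the $d_F$- and $d_{F_0}$-terms to the left-hand side and yields precisely the inequality displayed in the corollary.

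There is no real obstacle in the argument; the only thing to watch is that the substitution bookkeeping is carried out carefully, so that each of the four pieces $\sum_{A} d_F(v)$, $d_F(A,B)$, $\sum_{B} d_{F_0}(v)$, and $d_{F_0}(A,B)$ ends up on the correct side with the correct sign. Once this is checked, the ``furthermore'' assertion is inherited for free from the corresponding clause of Lovász's theorem.
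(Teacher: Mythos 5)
Your proof is correct and follows essentially the same route as the paper: delete $E(F\cup F_0)$, shift $g$ and $f$ by $d_F$, invoke Theorem~\ref{thm:Lovasz} on the resulting graph, and translate its condition back using $d_{G'}(v)=d_G(v)-d_F(v)-d_{F_0}(v)$ and $d_{G'}(A,B)=d_G(A,B)-d_F(A,B)-d_{F_0}(A,B)$. Your write-up is in fact slightly more explicit than the paper's, which leaves the final rearrangement and the factor bijection to the reader.
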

\begin{proof}
{Let $G'=G\setminus E(F\cup F_0)$.
Obviously, the graph $G$ has a $(g,f)$-factor $H$ including $F$ excluding $F_0$ if and only if $G'$ has a $(g',f')$-factor including $F$, where $g'(v)=g(v)-d_F(v)$ and $f'(v)=f(v)-d_F(v)$ for each vertex $v$. 
Note that the condition $g(v)<f(v)$ implies $g'(v)<f'(v)$.
Since $d_{G'}(v)=d_G(v)-d_F(v)-d_{F_0}(v)$ for each vertex $v$, and 
$d_{G'}(A,B)=d_{G}(A,B)-d_{F}(A,B)-d_{F_0}(A,B)$ for any two disjoint subsets $A$ and $B$ of $V(G)$, the assertion can be proved with respect to Theorem~\ref{thm:Lovasz}.
}\end{proof}
The following corollary is an interesting application of Corollary~\ref{cor:base} which plays a basic role in this paper.
\begin{cor}\label{cor:F-F0}
{Let $G$ be a graph and let $g$ and $f$ be two integer-valued functions on $V(G)$ with $g<f$.
 Let $F$ and $F_0$ be two edge-disjoint factors of $G$.
Then $G$ has a $(g,f)$-factor including $F$ excluding $F_0$, if $G$ has an orientation such that for each vertex $v$, 
$$g(v)\le d^+_G(v)+d^-_{F}(v)-d^+_{F_0}(v) \text{ and } d^-_G(v)+d^+_F(v)- d^-_{F_0}(v)\le f(v).$$
Furthermore, the assertion holds when $g(z)=f(z)$ for at most one vertex $z$.
}\end{cor}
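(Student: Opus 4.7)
The plan is to verify the hypothesis of Corollary~\ref{cor:base} directly. Fix disjoint subsets $A,B\subseteq V(G)$ and set $S=A\cup B$. After moving the $F$ and $F_0$ contributions to the right-hand side, the required inequality becomes
$$\sum_{v\in A}(f(v)-d_F(v))+\sum_{v\in B}(d_G(v)-g(v)-d_{F_0}(v))\ge d_G(A,B)-d_F(A,B)-d_{F_0}(A,B),$$
so the task is to bound the left-hand side from below using the hypothesized orientation, and the right-hand side from above using that $F$ and $F_0$ are edge-disjoint subgraphs of $G$.

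The first key step is to observe that both hypotheses yield the same vertex-wise lower bound. From $f(v)\ge d_G^-(v)+d_F^+(v)-d_{F_0}^-(v)$ one obtains $f(v)-d_F(v)\ge d_G^-(v)-d_F^-(v)-d_{F_0}^-(v)$, and from $g(v)\le d_G^+(v)+d_F^-(v)-d_{F_0}^+(v)$ one obtains precisely the same bound for $d_G(v)-g(v)-d_{F_0}(v)$. Summing over $v\in S$ therefore reduces the task to
$$\sum_{v\in S}\bigl(d_G^-(v)-d_F^-(v)-d_{F_0}^-(v)\bigr)\ge d_G(A,B)-d_F(A,B)-d_{F_0}(A,B).$$

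To finish, I would rewrite $\sum_{v\in S}d_H^-(v)$ for each $H\in\{G,F,F_0\}$ as the number of arcs of $H$ entering $S$ from $V(G)\setminus S$ plus $e_H(S)$, and rewrite each $d_H(A,B)$ as $e_H(S)-e_H(A)-e_H(B)$. After these substitutions, the displayed inequality rearranges into a sum of three nonnegative quantities, each expressing that the corresponding count in $F\cup F_0$ does not exceed the count in $G$; this is exactly the edge-disjointness of $F$ and $F_0$ as subgraphs of $G$. The furthermore clause, where $g(z)=f(z)$ for at most one vertex $z$, is inherited verbatim from the corresponding clause of Corollary~\ref{cor:base}.

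The main obstacle I expect is the careful bookkeeping of directed versus undirected edge counts: one must verify that the two different hypotheses on $f$ and $g$ collapse to the identical lower bound $d_G^-(v)-d_F^-(v)-d_{F_0}^-(v)$, and one must correctly partition the arcs inside $S$ among $e_H(A)$, $e_H(B)$, and the cross term $d_H(A,B)$. Once this accounting is arranged, the reduction to edge-disjointness is transparent and no further orientation argument is required.
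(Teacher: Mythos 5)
Your proposal is correct and follows essentially the same route as the paper: both verify the hypothesis of Corollary~\ref{cor:base} by converting each of the two orientation inequalities into the common vertex-wise bound $d_G^-(v)-d_F^-(v)-d_{F_0}^-(v)$ and then absorbing the cross term $d_G(A,B)-d_F(A,B)-d_{F_0}(A,B)$ via a counting argument that ultimately rests on the edge-disjointness of $F$ and $F_0$ inside $G$. The only difference is bookkeeping: the paper bounds $\sum_{v\in A}d_F(v)$ and $\sum_{v\in B}d_{F_0}(v)$ separately using the quantities $d^-_{G'}(A,B)$ and $d^-_{G'}(B,A)$, whereas you sum over $S=A\cup B$ at once, which is a cosmetic reorganization of the same estimate.
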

\begin{proof}
{Define $G'$ to be the factor of $G$ with $E(G')=E(G)\setminus E(F\cup F_0)$.
Let $A$ and $B$ be two disjoint subsets of $V(G)$. It is easy to check that
$$\sum_{v\in A}d_F(v)=
\sum_{v\in A}(d^-_F(v)+d^+_F(v))\le 
\sum_{v\in A}(d_G^-(v)-d_{F_0}^-(v)+d_F^+(v))-d^-_{G'}(A,B),$$
where $d^-_{G'}(A,B)$ denotes the number of edges of $G$ whose heads are in $A$ and whose tails are in $B$.
Similarly, we must have
$$\sum_{v\in B}d_{F_0}(v)=
\sum_{v\in B}(d^-_{F_0}(v)+d^+_{F_0}(v))\le
 \sum_{v\in B}(d_G^-(v)-d_{F}^-(v)+d_{F_0}^+(v))-d^-_{G'}(B,A).$$
 Therefore,
$$\sum_{v\in A}d_F(v)+\sum_{v\in B}d_{F_0}(v)\le
\sum_{v\in A}(d_G^-(v)+d_F^+(v)-d_{F_0}^-(v))+
 \sum_{v\in B}(d_G(v)-(d_G^+(v)+d_{F}^-(v)-d_{F_0}^+(v))-d_{G'}(A,B),$$
which implies that 
$$\sum_{v\in A}d_F(v)+\sum_{v\in B}d_{F_0}(v)\le\sum_{v\in A}f(v)+
\sum_{v\in B}(d_G(v)-g(v)) -d_{G}(A,B)+d_{F}(A,B)+d_{F_0}(A,B).$$
Thus the assertion holds by Corollary~\ref{cor:base}.
}\end{proof}
Here we give an alternative proof for a weaker version of Corollary~\ref{cor:F-F0}.

\begin{thm}\label{Highly:Bounded:thm:main}
{Let $G$ be a directed graph with two edge-disjoint factors $F$ and $F_0$. For each vertex $v$, take $s(v)$ and $s_0(v)$ to be two nonnegative integers with $s(v)+s_0(v)\ge d_G^+(v)-d_G^-(v)$.
If $G$ is connected and $|E(F\cup F_0)|\ge 1$, then $G$ has a factor $H$ including $F$ excluding $F_0$ such that for each vertex $v$,
$$d^+_G(v)-d^+_{F_0}(v)-s_0(v)
\le d_{H}(v) \le d^-_G(v)+d^+_{F}(v)+s(v).$$
}\end{thm}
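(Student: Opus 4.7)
The plan is to apply Corollary~\ref{cor:F-F0} with the choices $g(v) := d_G^+(v) - d_{F_0}^+(v) - s_0(v)$ and $f(v) := d_G^-(v) + d_F^+(v) + s(v)$, using the orientation of $G$ given in the hypothesis (and the induced orientations of $F$ and $F_0$). With this substitution, the two orientation inequalities of Corollary~\ref{cor:F-F0} become $-s_0(v) \le d_F^-(v)$ and $-d_{F_0}^-(v) \le s(v)$, both of which are trivially valid since every quantity involved is nonnegative.

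Next I verify $g \le f$. A direct computation gives
\[
f(v) - g(v) = d_F^+(v) + d_{F_0}^+(v) + \bigl(s(v) + s_0(v) - (d_G^+(v) - d_G^-(v))\bigr) \ge d_F^+(v) + d_{F_0}^+(v) \ge 0,
\]
using the hypothesis $s(v) + s_0(v) \ge d_G^+(v) - d_G^-(v)$. In particular, the equality $g(v) = f(v)$ forces $d_F^+(v) = d_{F_0}^+(v) = 0$ together with $s(v) + s_0(v) = d_G^+(v) - d_G^-(v)$.

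The main obstacle is the ``at most one vertex'' restriction in the \emph{Furthermore} clause of Corollary~\ref{cor:F-F0}. Let $Z := \{v : g(v) = f(v)\}$. The assumption $|E(F \cup F_0)| \ge 1$ guarantees that the tail of any edge of $F \cup F_0$ lies outside $Z$, so $Z \ne V(G)$, and this vertex can play the role of the allowed exceptional one. When $|Z| \ge 2$, my plan is to reduce to a smaller instance by treating each $d_H(v)$ with $v \in Z$ as forced to equal the common value $g(v) = f(v)$, and then to use the connectivity of $G$ to reroute edges of $G \setminus (F \cup F_0)$ along alternating paths joining $Z$ with its complement in order to realize the required degrees outside $Z$. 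The technically delicate step will be verifying that after this reduction the Lovász-type inequality underlying Corollary~\ref{cor:base} is preserved, so that the existence of the desired factor $H$ can be concluded.
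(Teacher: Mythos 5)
Your reduction to Corollary~\ref{cor:F-F0} is set up correctly as far as it goes: with $g(v)=d_G^+(v)-d_{F_0}^+(v)-s_0(v)$ and $f(v)=d_G^-(v)+d_F^+(v)+s(v)$ the two orientation inequalities are indeed trivial, your computation $f(v)-g(v)=d_F^+(v)+d_{F_0}^+(v)+\bigl(s(v)+s_0(v)-(d_G^+(v)-d_G^-(v))\bigr)\ge 0$ is right, and so is the observation that $|E(F\cup F_0)|\ge 1$ places at least one vertex outside $Z=\{v: g(v)=f(v)\}$. But the argument stops exactly where the real difficulty begins. Corollary~\ref{cor:F-F0} (resting on Lov\'asz's theorem as quoted in Theorem~\ref{thm:Lovasz}) tolerates $g(z)=f(z)$ at \emph{one} vertex only; when $|Z|\ge 2$ the degree of $H$ is forced to equal $g(v)=f(v)$ exactly at every $v\in Z$, and the Lov\'asz criterion for $(g,f)$-factors with $g=f$ on a large set acquires an extra parity term (counting certain odd components) that the inequality you verified does not control. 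Your proposed fix --- freezing the degrees on $Z$ and ``rerouting edges along alternating paths'' using connectivity --- is only a plan: you do not define the reduced instance, do not show that suitable alternating paths exist with the right parities, and you explicitly defer the verification that the Lov\'asz-type inequality survives the reduction. That deferred step is the entire content of the hard case, so the proof is incomplete.

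For comparison, the paper does not route this theorem through Corollary~\ref{cor:F-F0} at all; the theorem is introduced precisely as an \emph{alternative} proof of a weaker version of that corollary. The paper's proof augments $G$ with new arcs so that every vertex becomes balanced, takes a directed Eulerian tour starting on an edge of $F$, and builds $H$ greedily along the tour with special rules for edges of $F$, of $F_0$, and of the auxiliary arcs; the tour supplies exactly the global parity control that your alternating-path sketch would need. If you want to salvage your route, you must either invoke the full $(g,f)$-factor theorem with its parity defect term and verify that stronger condition on $Z$, or carry out the rerouting argument in full detail.
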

\begin{proof}
{Add some new directed edges to $G$ such that for each vertex $v$ of the resulting graph $\mathcal{G}$, we have
$d^+_\mathcal{G}(v)=d^-_\mathcal{G}(v)$, and also
$d_M(v)=|d^+_G(v) - d^-_G(v)|$, where $M$ is the factor of $\mathcal{G}$ consisting of all such new edges.
We may assume that $|E(F)|\ge 1$.
Since $\mathcal{G}$ is connected, it has a directed Eulerian tour $e_1, \ldots, e_t$
 by starting at an edge $e_1$ with $e_1\in E(F)$.
For a vertex $v$ with $d^+_G(v) > d^-_G(v)$,
let $\omega_j(v)$ be the $j$-th incoming edge of $v$ in the tour with
$\omega_j(v)=e_{i-1}\in E(M)$ and
$e_{i}\not \not \in E(F\cup F_0)$, where $e_{t+1}=e_1$ and $j\ge 1$. 
Set $W_v$ to be the set of all such edges.
Let $\mathcal{W}_v$ be the set of all edges $e_{i}$ incident with $v$ such that $e_{i-1}\in W_v$, where $i\ge 2$.
In plus, for all other vertices $v$ with $d^+_G(v) \le d^-_G(v)$, take $W_v$ and $\mathcal{W}_v$ to be the empty set.
Note that for each vertex $v$, we have $|\mathcal{W}_v|=|W_v|\le \max\{0, d^+_G(v)- d^-_G(v)\}\le s_1(v)+s_2(v)$.
Now, we shall construct the nested factors $H_0,\ldots, H_t$ of $G$.
Let $H_0$ be the graph with no edges. Take $i$ to be an integer with $1\le i\le t$. 
Now, with respect to the following rules inductively construct $H_i$ from $H_{i-1},$
\begin{enumerate}{
\item [{\upshape$\triangleright$}] 
 If $e_{i}\not \in E(M) \cup E(F\cup F_0)$,
	\begin{enumerate}{
 	
 	 \item [{\upshape $\triangleright$ }]
		If $e_{i-1} \in W_v$ with $e_{i-1}= \omega_j(v)$,
		\begin{enumerate}{
				\item [{\upshape $\triangleright$}]
					 If $j\le s_2(v)$,					\hspace{30mm} 	set $H_i = H_{i-1}$.

 	 			\item [{\upshape $\triangleright$}]
					Else if $j> s_2(v)$, 				\hspace{23mm} 	set $H_i = H_{i-1}+e_{i}$.
		}\end{enumerate}
		\item [{\upshape $\triangleright$}]
			Else 
			\begin{enumerate}{
	 			\item [{\upshape $\triangleright$}]
				If $e_{i-1}\in E(H_{i-1})$, 				\hspace{20.5mm} 	set $H_i = H_{i-1}$.
	 			 \item [{\upshape $\triangleright$}]
				Else if $e_{i-1}\not\in E(H_{i-1})$, 			\hspace{13mm} 	set $H_i = H_{i-1}+e_i$.
			}\end{enumerate}
	}\end{enumerate}
\item [{\upshape $\triangleright$}] 
 	Else if $e_{i}\in E(F)$,					\hspace{36.0mm} 	set $H_i = H_{i-1}+e_i $.
\item [{\upshape $\triangleright$}] 
	Else if $e_{i}\in E(M)\cup E(F_0)$, 			\hspace{22.0mm} 	set $H_i = H_{i-1}$.
}\end{enumerate}
Put $H=H_t$.
 Obviously, $H$ is a factor of $G$ including $F$ excluding $ F_0$.
Pick $v\in V(G)$. %
Let $E_H(v)$ be the set of all edges of $H$ incident with $v$, and
 let $Q(v)$ be the set of pair of edges of $G$ incident with $v$ as $e_j,e_{j+1}$ such that 
$ e_j\not\in W_v$ and $e_{j+1}\not\in E(F_1\cup F_2)\cup E(M)$.
Thus, we have
$$|Q(v)|/2=d^+_G(v)-d^+_{F}(v)-d^+_{F_0}(v)-|W_v|.$$
According to the construction of $H$, it is not difficult to verify that
$|E_H(v)\cap Q(v)|= |Q(v)|/2$, $|E_H(v)\cap E(F_0)|=0$, 
and $|E_H(v)\cap E(F)\setminus Q(v)|\ge d^+_{F}(v)$, 
$|E_H(v)\cap \mathcal{W}_v|\ge |\mathcal{W}_v|-s_2$.
Therefore, one can conclude that
$$d_H(v) \ge |Q(v)|/2+d_{F}^+(v)+|E_H(v)\cap \mathcal{W}_v|\ge d^+_G(v)-d^+_{F_0}(v)-s_0(v).$$
Also, we have
$d_H(v) \le d_G(v)-|Q(v)|/2-d_{F_0}^+(v)-\min\{s_0,|\mathcal{W}_v|\}\le 
 d^-_G(v)+d^+_{F}(v)+s(v).$
Hence the proof can be completed.
}\end{proof}
\subsection{Graphs with tree-connectivity at least $m+m_0$}
The following theorem gives a sufficient partition-connectivity condition for the existence of tree-connected factors 
with a given bound on degrees.
\begin{thm}\label{thm:partition-connected}
{Let $G$ be a graph and 
let $s$ and $l_0$ be two  integer-valued functions on $V(G)$ with $s\le l_0$.
If $G$ is $(m+m_0, l_0)$-partition-connected and $m+m_0>0$, then it admits
an $m$-tree-connected factor $H$ such that its complement is $m_0$-tree-connected and for each vertex~$v$,
$$m+s(v)\le d_{H}(v)\le \max\{m+s(v)+1,d_G(v)-l_0(v)-m_0\}.$$
Furthermore, for an arbitrary given vertex $z$, we can have $m+s(z)\le d_{H}(z)\le \max\{m+s(z),d_G(z)-l_0(z)-m_0\}.$
When $m_0=0$, $H$ can include an arbitrary given factor with maximum degree at most $m$.
}\end{thm}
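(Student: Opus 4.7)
The plan is to reduce the claim to an application of Corollary~\ref{cor:F-F0} after a careful three-piece decomposition of $G$ and a careful choice of orientation. Since $G$ is $(m+m_0,l_0)$-partition-connected with $m+m_0>0$, I apply Lemma~\ref{lem:M} with $m'=m+m_0$ (and with $M$ equal to the prescribed factor of maximum degree at most $m$ when $m_0=0$, or empty otherwise; note $\Delta(M)\le m\le m+m_0$) to obtain $G=F'\cup\mathcal{F}$ in which $F'$ is $(m+m_0)$-tree-connected and contains $M$, while $\mathcal{F}$ is $(0,l_0)$-partition-connected. Partitioning the $m+m_0$ edge-disjoint spanning trees of $F'$ into groups of $m$ and $m_0$ then yields $F'=F\cup F_0$ with $F$ being $m$-tree-connected (and still containing $M$ when $m_0=0$) and $F_0$ being $m_0$-tree-connected.

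Next, I would orient $G$ so that $\mathcal{F}$ satisfies $d^+_{\mathcal{F}}(v)\ge l_0(v)$ for every vertex $v$ (this is the reverse of the orientation witnessing the partition-connectedness of $\mathcal{F}$), while $F$ and $F_0$ are oriented arbitrarily. The pivotal observation is the pair of orientation-free identities
\[ d^+_G(v)+d^-_F(v)-d^+_{F_0}(v)=d_F(v)+d^+_{\mathcal{F}}(v),\qquad d^-_G(v)+d^+_F(v)-d^-_{F_0}(v)=d_F(v)+d^-_{\mathcal{F}}(v). \]
Combined with $d_F(v)\ge m$, $d_{F_0}(v)\ge m_0$, $d^+_{\mathcal{F}}(v)\ge l_0(v)\ge s(v)$, and $d^-_{\mathcal{F}}(v)\le d_{\mathcal{F}}(v)-l_0(v)$, these immediately yield $d_F(v)+d^+_{\mathcal{F}}(v)\ge m+s(v)$ and $d_F(v)+d^-_{\mathcal{F}}(v)\le d_G(v)-m_0-l_0(v)$ at every vertex.

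Setting $g(v)=m+s(v)$ and $f(v)=\max\{m+s(v)+1,\,d_G(v)-l_0(v)-m_0\}$---with the tighter choice $f(z)=\max\{m+s(z),\,d_G(z)-l_0(z)-m_0\}$ at the distinguished vertex $z$ in the furthermore part, which invokes the refinement of Corollary~\ref{cor:F-F0} permitting $g(z)=f(z)$ at a single vertex---the displayed inequalities verify its orientation hypothesis. Corollary~\ref{cor:F-F0} then delivers a $(g,f)$-factor $H$ of $G$ containing $F$ and disjoint from $F_0$. Then $F\subseteq H$ forces $H$ to be $m$-tree-connected, $F_0\subseteq G\setminus E(H)$ forces the complement to be $m_0$-tree-connected, the degree bounds on $H$ are exactly the claimed inequalities, and $M\subseteq H$ when $m_0=0$. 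The main obstacle is spotting the correct orientation of $\mathcal{F}$ that collapses Corollary~\ref{cor:F-F0}'s orientation expressions to the clean form $d_F(v)+d^\pm_{\mathcal{F}}(v)$ independently of how $F$ and $F_0$ are oriented; once this is noted, the verification reduces to the definitions of tree- and partition-connectedness.
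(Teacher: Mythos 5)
Your proposal is correct and follows essentially the same route as the paper: decompose $G$ into an $m$-tree-connected $F$, an $m_0$-tree-connected $F_0$, and a $(0,l_0)$-partition-connected $\mathcal{F}$ (invoking Lemma~\ref{lem:M} to place the prescribed factor $M$ inside $F$ when $m_0=0$), orient $\mathcal{F}$ with $d^+_{\mathcal{F}}(v)\ge l_0(v)$, verify the same two identities and bounds, and apply Corollary~\ref{cor:F-F0} with the identical choices of $g$ and $f$ (including the relaxed $g(z)\le f(z)$ at the distinguished vertex). The only cosmetic difference is that you obtain $F$ and $F_0$ by splitting the spanning trees of a single $(m+m_0)$-tree-connected factor, which is just an explicit way of doing what the paper's decomposition asserts.
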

\begin{proof}
{Decomposed $G$ into 
an $m_0$-tree-connected factor $F_0$, an $m$-tree-connected factor $F$, and 
 a $(0,l_0)$-partition-connected factor $\mathcal{F}$. Consider arbitrary orientations for $F$ and $F_0$, and consider an orientation for $\mathcal{F}$ such that for each vertex $v$, $d^+_{\mathcal{F}}(v)\ge l_0(v)$.
According to Lemma~\ref{lem:M}, for the case $m_0=0$, we may assume that $F$ includes an arbitrary given factor of $G$ with maximum degree at most $m$.
For each vertex $v$, define $g(v)=m+s(v)$ and 
$f(v)=\max\{m+s(v)+1, d_G(v)-l_0(v)-m_0\}$.
Obviously, $g(v)< f(v)$.
For the vertex $z$, we can define $f(z)=\max\{m+s(z), d_G(z)-l_0(z)-m_0\}$ so that $g(z)\le f(z)$.
Since $F$ is $m$-tree-connected, we must have $d_F(v)\ge m$ and so
$g(v)\le m+l_0(v)\le d_F(v)+d^+_{\mathcal{F}}(v)=d^+_G(v)+d^-_F(v)-d^+_{F_0}(v)$.
Since $F_0$ is $m_0$-tree-connected, we must have $d_{F_0}(v)\ge m_0$ and so
$d^-_G(v)+d^+_F(v)- d^-_{F_0}(v) =d_G(v)-d^+_{\mathcal{F}}(v)-d_{F_0}(v)\le d_G(v)-l_0(v)-m_0\le f(v)$.
Thus by Corollary~\ref{cor:F-F0},
 the graph $G$ has a $(g,f)$-factor $H$ including $F$ excluding $F_0$. Hence the theorem holds.
}\end{proof}
\subsection{Graphs with edge-connectivity at least $2m+2m_0$}
The following theorem gives a sufficient edge-connectivity condition for the existence of tree-connected factors
in graphs with a given bound on degrees.
\begin{thm}\label{thm:bounded:partition} 
{Let $G$ be a graph.
If $G$ is $(2m+2m_0)$-edge-connected and $m+m_0> 0$, then 
then $G$ has an $m$-tree-connected factor $H$ such that its complement is $m_0$-tree-connected and
 for each vertex $v$,
$$
\lfloor\frac{d_G(v)}{2}\rfloor-m_0\le d_{H}(v)\le \lceil\frac{d_G(v)}{2}\rceil+m.$$
Furthermore, for an arbitrary given vertex $z$, $d_H(z)$ can be assigned to any integer value in whose interval.
}\end{thm}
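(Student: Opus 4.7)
The plan is to reduce the theorem to Theorem~\ref{thm:partition-connected} by first upgrading the edge-connectivity hypothesis into the required partition-connectivity. Applying Theorem~\ref{thm:basic:M} with $m$ replaced by $m+m_0$ and with $M=\emptyset$, the $(2m+2m_0)$-edge-connectedness of $G$ yields that $G$ decomposes into an $(m+m_0)$-tree-connected factor and a $(0,l_0)$-partition-connected factor, where $l_0(v)=\lfloor d_G(v)/2\rfloor-m-m_0$ at every vertex $v$. In particular $G$ is $(m+m_0,l_0)$-partition-connected, which is the input form demanded by Theorem~\ref{thm:partition-connected}.

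I would then invoke Theorem~\ref{thm:partition-connected} with these same $m$, $m_0$, $l_0$, and with the choice $s(v)=l_0(v)$ for every $v$ (clearly $s\le l_0$). This produces an $m$-tree-connected factor $H$ whose complement is $m_0$-tree-connected and which satisfies
$$m+s(v)=\lfloor d_G(v)/2\rfloor-m_0\le d_H(v)\le \max\bigl\{\lfloor d_G(v)/2\rfloor-m_0+1,\; \lceil d_G(v)/2\rceil+m\bigr\}.$$
Since $m+m_0>0$, the second entry of the maximum dominates the first, so $d_H(v)\le \lceil d_G(v)/2\rceil+m$ as desired.

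For the furthermore statement about a distinguished vertex $z$, the idea is to use the enhanced version of Theorem~\ref{thm:basic:M} (with $m$ replaced by $m+m_0$), which permits $l_0(z)=\lceil d_G(z)/2\rceil$ while keeping $l_0(v)=\lfloor d_G(v)/2\rfloor-m-m_0$ elsewhere. Given a target integer $t\in[\lfloor d_G(z)/2\rfloor-m_0,\;\lceil d_G(z)/2\rceil+m]$, set $s(z)=t-m$ (and $s(v)=l_0(v)$ for $v\ne z$). The constraint $s(z)\le l_0(z)$ becomes $t-m\le\lceil d_G(z)/2\rceil$, which holds by the upper end of the target interval. The furthermore clause of Theorem~\ref{thm:partition-connected} then sandwiches $d_H(z)$ between $m+s(z)=t$ and
$$\max\bigl\{m+s(z),\; d_G(z)-l_0(z)-m_0\bigr\}=\max\bigl\{t,\; \lfloor d_G(z)/2\rfloor-m_0\bigr\}=t,$$
forcing $d_H(z)=t$, while the bounds at the other vertices remain as in the main statement.

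The only real issue is bookkeeping: verifying that the choices $s(v)=l_0(v)$ for $v\ne z$ and $s(z)=t-m$, together with the upgraded value $l_0(z)=\lceil d_G(z)/2\rceil$, meet the $s\le l_0$ hypothesis of Theorem~\ref{thm:partition-connected} and make both inequalities in its furthermore clause collapse to $t$ at $z$. Beyond this, the argument is a direct composition of Theorems~\ref{thm:basic:M} and~\ref{thm:partition-connected} with no further combinatorial content required.
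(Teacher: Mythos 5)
Your proposal is correct and follows essentially the same route as the paper: both proofs first invoke Theorem~\ref{thm:basic:M} (with $m$ replaced by $m+m_0$) to obtain $(m+m_0,l_0)$-partition-connectivity with $l_0(v)=\lfloor d_G(v)/2\rfloor-m-m_0$ off $z$ and $l_0(z)=\lceil d_G(z)/2\rceil$, and then apply Theorem~\ref{thm:partition-connected} with $s=l_0$ away from $z$ and an adjustable $s(z)$ at $z$ to pin down $d_H(z)$. Your explicit parametrization $s(z)=t-m$ is just a restatement of the paper's ``$s(z)$ arbitrary in the given range,'' and the bookkeeping checks you flag all go through.
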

\begin{proof}
{By Theorem~\ref{thm:basic:M}, 
the graph $G$ is $(m+m_0, l_0)$-partition-connected such that 
for each $v\in V(G)\setminus \{z\}$, 
$l_0(v)=\lfloor d_G(v)/2 \rfloor-m-m_0$, and also $l_0(z)=\lceil d_G(z)/2 \rceil$.
By Theorem~\ref{thm:partition-connected},
the graph $G$ has an $m$-tree-connected $f$-factor $H$ such that its complement is $m_0$-tree-connected and
for each $v\in V(G)\setminus \{z\}$, 
$m+l_0(v)\le d_{H}(v)\le \max\{m+l_0(v)+1,d_G(v)-l_0(v)-m_0\}=d_G(v)-l_0(v)-m_0$, and also 
$m+s(z)\le d_{H}(z)\le \max\{m+s(z),d_G(z)-l_0(z)-m_0\}=m+s(z)$, where
$s(z)$ is an arbitrary integer with $\lfloor d_G(z)/2\rfloor-m_0-m \le s(z) \le \lceil d_G(z)/2\rceil=l_0(z)$.
Hence $H$ is the desired $f$-factor we are looking for.
}\end{proof}
A strengthened version for Theorem 8 in~\cite{AHO} is given in the following result.
\begin{thm}\label{thm:supplement:bounded}
{Let $G$ be a graph with $z\in V(G)$ and with two edge-disjoint loopless factors $M$ and $M_0$ satisfying $\Delta(M)\le m$ and $|E(M_0)|\le m$. Assume that $d_{M_0}(v)<m$ for all vertices $v$, except possibly for $z$.
If $G$ is $2m$-edge-connected and $m>0$, then it has an $m$-tree-connected factor $H$ including $M$ excluding $M_0$ such that for each vertex $v$, $$\lfloor\frac{d_G(v)}{2}\rfloor\le d_H(v) \le \lceil\frac{d_G(v)}{2}\rceil+m-d_{M_0}(v).$$
Furthermore, for the vertex $z$, the upper bound can be reduced to $\lfloor\frac{d_G(z)}{2}\rfloor +1-d_{M_0}(z)$.
}\end{thm}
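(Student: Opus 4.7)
The strategy is to mirror the proof of Theorem~\ref{thm:bounded:partition}, substituting Theorem~\ref{thm:basic:MM0} for Theorem~\ref{thm:basic:M} to accommodate the prescribed $M_0$, and then invoking Corollary~\ref{cor:F-F0} to force the simultaneous inclusion of $M$ and exclusion of $M_0$. First, apply Theorem~\ref{thm:basic:MM0} together with its Furthermore clause at $z$ to decompose $G$ into an $m$-tree-connected factor $F$ containing $M$ and avoiding $M_0$, the prescribed factor $M_0$ itself, and the complement $\mathcal{F}:=G\setminus E(F\cup M_0)$ which is $(0,l)$-partition-connected with $l(v)=\lfloor d_G(v)/2\rfloor-m$ for $v\neq z$ and the sharpened value $l(z)=\lceil d_G(z)/2\rceil-m$ at the prescribed vertex. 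Orient $F$ and $M_0$ arbitrarily, and orient $\mathcal{F}$ so that $d_{\mathcal{F}}^+(v)\ge l(v)$ at every vertex (possible by the $(0,l)$-partition-connectivity); these together induce an orientation of $G$.

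Next, set $g(v):=\lfloor d_G(v)/2\rfloor$ and $f(v):=\lceil d_G(v)/2\rceil+m-d_{M_0}(v)$, with the refined upper bound at $z$ coming from the improved $l(z)$. Using $d_F(v)\ge m$ (from $m$-tree-connectedness) and $d_{\mathcal{F}}^+(v)\ge l(v)$, expanding $d_G^{\pm}=d_F^{\pm}+d_{\mathcal{F}}^{\pm}+d_{M_0}^{\pm}$ telescopes into
$$d_G^+(v)+d_F^-(v)-d_{M_0}^+(v)=d_F(v)+d_{\mathcal{F}}^+(v)\ge m+l(v),$$
$$d_G^-(v)+d_F^+(v)-d_{M_0}^-(v)=d_F(v)+d_{\mathcal{F}}^-(v)\le d_G(v)-d_{M_0}(v)-l(v).$$
Substituting the chosen values of $l$ yields the two inequalities $g(v)\le d_G^+(v)+d_F^-(v)-d_{M_0}^+(v)$ and $d_G^-(v)+d_F^+(v)-d_{M_0}^-(v)\le f(v)$ demanded by Corollary~\ref{cor:F-F0}, and the tighter upper bound at $z$ is precisely a consequence of using $\lceil d_G(z)/2\rceil-m$ rather than $\lfloor d_G(z)/2\rfloor-m$ in the last display.

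The only delicate point is the strict inequality $g<f$ that Corollary~\ref{cor:F-F0} requires off the exceptional vertex. For $v\ne z$, the hypothesis $d_{M_0}(v)<m$ forces $f(v)-g(v)\ge m-d_{M_0}(v)\ge 1$, so strictness holds automatically. At $z$, equality $g(z)=f(z)$ can occur when $d_{M_0}(z)$ is maximal, but this case is exactly what the ``at most one vertex'' clause of Corollary~\ref{cor:F-F0} permits. Applying that corollary then produces the required $m$-tree-connected factor $H\supseteq M$ with $H\cap M_0=\emptyset$ and $g(v)\le d_H(v)\le f(v)$ at every vertex. The main obstacle I anticipate is the careful degree bookkeeping above, together with the extraction of the sharpened upper bound at $z$ from the refined $l(z)$; both reduce to routine calculations once Theorem~\ref{thm:basic:MM0}'s Furthermore is invoked correctly.
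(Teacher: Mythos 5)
Your proposal is correct and follows essentially the same route as the paper: the paper likewise extracts the $(m,l_0)$-partition-connected structure of $G$ relative to $M$ and $M_0$ (via Theorem~\ref{thm:basic:M}/\ref{thm:basic:MM0}) and then feeds it into the orientation criterion of Corollary~\ref{cor:F-F0}, packaged there as Theorem~\ref{thm:partition-connected}, the only cosmetic difference being that the paper first deletes $E(M_0)$ while you keep it and use the ``excluding $F_0$'' mechanism directly. One caveat worth noting: at $z$ your computation (exactly like the paper's own) yields the upper bound $\lfloor d_G(z)/2\rfloor+m-d_{M_0}(z)$ rather than the literal $\lfloor d_G(z)/2\rfloor+1-d_{M_0}(z)$ of the ``Furthermore'' clause, which for $m\ge 2$ and $d_{M_0}(z)=m$ would even fall below the stated lower bound, so the ``$+1$'' in the statement appears to be a typo for ``$+m$'' and is not a defect of your argument.
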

\begin{proof}
{By applying Theorem~\ref{thm:basic:M}, one can conclude that
the graph $G\setminus E(M_0)$ is $(m, l_0)$-partition-connected such that 
for each $v\in V(G)\setminus \{z\}$, 
$l_0(v)=\lfloor d_G(v)/2 \rfloor-m$, and also
$l_0(z)=\lceil d_G(z)/2 \rceil-m$.
By Theorem~\ref{thm:list:partition},
the graph $G\setminus E(M_0)$ has an $m$-tree-connected factor $H$ including $M$ such that for each $v\in V(G)\setminus \{z\}$, 
$\lfloor d_G(v)/2\rfloor\le m+l_0(v)= d_{H}(v)\le \max\{m+l_0(v)+1,d_{G}(v)-d_{M_0}(v)-l_0(v)\}= 
\lceil d_G(v)/2\rceil+m-d_{M_0}(v)$, and also
$\lfloor d_G(z)/2\rfloor= m+s(z)\le d_{H}(z)\le \max\{m+l_0(z),d_{G}(z)-d_{M_0}(z)-l_0(z)\}= 
\lceil d_G(z)/2\rceil+m-d_{M_0}(z)$, where $s(z)=\lfloor d_G(z)/2\rfloor-m$.
Hence $H$ is the desired factor of $G$.
}\end{proof}
\begin{cor}
{Every connected $4$-regular graph has a connected $\{2,3\}$-factor including an arbitrary given matching.
}\end{cor}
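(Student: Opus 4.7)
The plan is to derive the corollary directly from Theorem~\ref{thm:supplement:bounded} with the parameter choice $m=1$, taking $M$ to be the prescribed matching and $M_0$ to be the empty factor. Under these choices a $1$-tree-connected factor is precisely a connected spanning subgraph, so the goal reduces to verifying the hypotheses of Theorem~\ref{thm:supplement:bounded} for $G$.

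To apply that theorem with $m=1$, I first need $G$ to be $2$-edge-connected. Since $G$ is connected and $4$-regular, every vertex has even degree, so a simple parity argument rules out bridges: deleting a bridge $uv$ would leave the component containing $u$ with exactly one odd-degree vertex, contradicting the handshake lemma. Thus $G$ is $2$-edge-connected, and the remaining hypotheses hold trivially, namely $\Delta(M)\le 1 = m$, $|E(M_0)| = 0 \le m$, and $d_{M_0}(v)=0<m$ for every vertex $v$.

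Invoking Theorem~\ref{thm:supplement:bounded} then produces a connected factor $H$ of $G$ that contains $M$ and satisfies
$$\lfloor d_G(v)/2\rfloor \le d_H(v) \le \lceil d_G(v)/2\rceil + 1 - d_{M_0}(v)$$
for every vertex $v$. Substituting $d_G(v)=4$ and $d_{M_0}(v)=0$ collapses this to $2\le d_H(v)\le 3$, so $H$ is the desired connected $\{2,3\}$-factor including the given matching.

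The main (and essentially only) obstacle is recognizing that a connected $4$-regular graph is automatically $2$-edge-connected, so that the edge-connectivity hypothesis of Theorem~\ref{thm:supplement:bounded} is met; once this is in hand, the degree bounds in that theorem fit exactly onto a $4$-regular host and force degrees in $\{2,3\}$ with no further work.
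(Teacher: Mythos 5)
Your proof is correct and follows exactly the paper's route: the paper's entire proof is ``Apply Theorem~\ref{thm:supplement:bounded} with $m=1$,'' and you have simply made explicit the choices $M$ = given matching, $M_0=\emptyset$, the parity argument showing a connected $4$-regular graph is $2$-edge-connected, and the collapse of the degree bounds to $\{2,3\}$. These added details are all accurate and the approach is the same.
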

\begin{proof}
{Apply Theorem~\ref{thm:supplement:bounded} with $m=1$.
}\end{proof}
%
%
%
%
%
%
%
%
\section{Factors modulo $2$}
\subsection{Preliminary results}
In this section, we consider the existence of connected $f$-factors modulo $2$ with bounded degrees in general graphs.
For this purpose, we need the following recent result on the existence of $f$-factors modulo $2$ with restricted degrees.
\begin{thm}{\rm (\cite{ModuloFactorBounded})}\label{thm:2:partition}
{Let $G$ be a graph and let $f:V(G)\rightarrow Z_2$ be a mapping with 
$ \sum_{v\in V(G)}f(v)$ even. 
Let $s$, $s_0$, and $l_0$ be three integer-valued functions on $V(G)$ satisfying $s+s_0< d_G$ and $\max\{s,s_0\}\le l_0$. 
If $G$ is $(1, l_0)$-partition-connected, then it admits an $f$-factor $H$ such that for each vertex~$v$, 
$$s(v)\le d_{H}(v)\le d_G(v)-s_0(v).$$
}\end{thm}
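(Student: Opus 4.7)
The plan is to combine the decomposition coming from $(1,l_{0})$-partition-connectedness with a two-step construction: first produce a factor satisfying only the degree range via Corollary~\ref{cor:F-F0}, and then adjust parities by means of a $U$-join carried by the spanning tree component.

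I would first decompose $G$ as $T^{*}\cup F$, where $T^{*}$ is a spanning tree and $F$ is a $(0,l_{0})$-partition-connected factor, and fix an orientation of $F$ with $d_{F}^{+}(v)\ge l_{0}(v)$ for every vertex $v$. Orienting $T^{*}$ arbitrarily, the resulting orientation of $G$ satisfies $d_{G}^{+}(v)\ge d_{F}^{+}(v)\ge l_{0}(v)\ge \max\{s(v),s_{0}(v)\}$. Applying Corollary~\ref{cor:F-F0} with $F=F_{0}=\emptyset$, $g=s$, and $f=d_{G}-s_{0}$ (the strict inequality $g<f$ is guaranteed by the hypothesis $s+s_{0}<d_{G}$), we obtain a factor $H_{0}$ of $G$ with $s(v)\le d_{H_{0}}(v)\le d_{G}(v)-s_{0}(v)$ for every $v$.

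Next, I would correct the parities of $H_{0}$. Set $U=\{v:d_{H_{0}}(v)\not\equiv f(v)\pmod{2}\}$. Since $\sum_{v\in V(G)}f(v)$ is even and $\sum_{v\in V(G)}d_{H_{0}}(v)$ is always even, $|U|$ is even. I would then seek a $U$-join $J$ in $G$ with the additional property that $H=H_{0}\triangle J$ still satisfies $s(v)\le d_{H}(v)\le d_{G}(v)-s_{0}(v)$ at every vertex; once such $J$ is found, $H$ is the desired $f$-factor.

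The principal obstacle is producing a $U$-join that respects the degree bounds. A naive $U$-join carried by $T^{*}$ can shift a vertex degree by $\pm 1$ at endpoints and by $\pm 2$ at interior vertices, possibly violating the interval $[s(v),d_{G}(v)-s_{0}(v)]$ at boundary vertices. To overcome this, whenever a violation would occur at a vertex $v$, I would reroute the offending portion of $J$ through the oriented factor $F$: using the slack $d_{F}^{+}(v)\ge l_{0}(v)\ge \max\{s(v),s_{0}(v)\}$ to supply a directed detour in $F$ that moves the parity burden to an interior vertex with spare capacity. Iterating these local exchanges (which terminate because the total boundary deficit strictly decreases at each step) produces a $U$-join $J$ such that $H=H_{0}\triangle J$ has the required parities and degree bounds, completing the proof.
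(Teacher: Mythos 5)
This theorem is quoted from \cite{ModuloFactorBounded} and is not proved in the present paper, so there is no in-paper proof to compare against; I can only judge your argument on its own terms. Your first phase is sound: decomposing $G$ into a spanning tree and a $(0,l_0)$-partition-connected factor, orienting so that $d_G^+(v)\ge l_0(v)\ge\max\{s(v),s_0(v)\}$, and applying Corollary~\ref{cor:F-F0} with $F=F_0=\emptyset$, $g=s$, $f=d_G-s_0$ (the hypothesis $s+s_0<d_G$ gives $g<f$) does yield a factor $H_0$ with $s(v)\le d_{H_0}(v)\le d_G(v)-s_0(v)$.

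The gap is the entire second phase. The hypothesis allows $s(v)+s_0(v)=d_G(v)-1$, in which case the admissible window is the pair of consecutive integers $\{s(v),\,s(v)+1\}$; exactly one of these has parity $f(v)$, so the degree of the final factor at $v$ is completely forced. A ``degrees first, parity later'' scheme must therefore be able to drive every vertex to a prescribed exact value, which is the full parity-factor existence problem rather than a perturbation of $H_0$. Concretely, if $v\notin U$ is an internal vertex of the $U$-join with both incident join edges in $H_0$ and $d_{H_0}(v)=s(v)$, then $H_0\triangle J$ has degree $s(v)-2$ at $v$. Your proposed repair --- rerouting through $F$ using $d_F^+(v)\ge l_0(v)$ --- is not an argument: that inequality was already spent to produce the bounds on $H_0$ and says nothing about which edges of $F$ at $v$ lie in $H_0$; any detour is itself a join whose internal vertices face the same $\pm 2$ difficulty; and the assertion that ``the total boundary deficit strictly decreases'' is unsubstantiated, since a local exchange can create new violations elsewhere and no potential function is exhibited. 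A workable route is to impose parity and degree bounds simultaneously via Lov\'asz's parity $(g,f)$-factor theorem, taking $g(v)$ and $f(v)$ to be the extreme integers of parity $f(v)$ inside $[s(v),d_G(v)-s_0(v)]$ and using the spanning tree together with the evenness of $\sum_{v}f(v)$ to control the odd-component term in that criterion --- a term your outline never confronts.
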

Before stating the main result, let us derive the following generalization of Theorem~\ref{thm:2:partition}.
\begin{cor}\label{cor:modulo-2:partition-connected}
{Let $G$ be a graph and let $f:V(G)\rightarrow Z_2$ be a mapping with 
$\sum_{v\in V(G)} f(v)\stackrel{2}{\equiv}0$.
Let $F$ and $F_0$ be two edge-disjoint factors of $G$, and 
let $s$, $s_0$, and $l_0$ be three integer-valued functions on $V(G)$ satisfying $s+s_0< d_G-d_F-d_{F_0}$ and
$\max\{s,s_0\}\le l_0$.
If $G\setminus E(F\cup F_0)$ is $(1, l_0)$-partition-connected, then $G$ admits
an $f$-factor $H$ including $F$ excluding $F_0$ such that for each vertex $v$,
$$d_{F}(v)+s(v)\le d_{H}(v)\le d_G(v)-s_0(v)-d_{F_0}(v).$$
}\end{cor}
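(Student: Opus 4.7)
The plan is to reduce the statement to Theorem~\ref{thm:2:partition} by removing $F$ and $F_0$ from $G$ and suitably adjusting the parity function and the degree bounds. Set $G' = G \setminus E(F \cup F_0)$, and define $f' : V(G) \to Z_2$ by $f'(v) \stackrel{2}{\equiv} f(v) - d_F(v)$. Since $\sum_v d_F(v) = 2|E(F)|$ is even, the hypothesis $\sum_v f(v) \stackrel{2}{\equiv} 0$ gives $\sum_v f'(v) \stackrel{2}{\equiv} 0$, so $f'$ is an admissible parity function for $G'$.

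Next I would check that the hypotheses of Theorem~\ref{thm:2:partition} are satisfied by $G'$ with the same functions $s$, $s_0$, $l_0$. The graph $G'$ is $(1,l_0)$-partition-connected by assumption, and $\max\{s,s_0\}\le l_0$ is directly inherited. For each vertex $v$ we have $d_{G'}(v) = d_G(v) - d_F(v) - d_{F_0}(v)$, so the hypothesis $s + s_0 < d_G - d_F - d_{F_0}$ reads precisely as $s(v) + s_0(v) < d_{G'}(v)$. Thus Theorem~\ref{thm:2:partition} applies and yields an $f'$-factor $H'$ of $G'$ with
$$s(v) \le d_{H'}(v) \le d_{G'}(v) - s_0(v) \quad \text{for every } v \in V(G).$$

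Finally I would set $H = H' \cup F$. Since $H'$ and $F$ are edge-disjoint and $F \cap F_0 = \emptyset$, $H$ contains $F$ and avoids $F_0$. For each vertex $v$, $d_H(v) = d_{H'}(v) + d_F(v)$, and the congruence $d_{H'}(v) \stackrel{2}{\equiv} f'(v) \stackrel{2}{\equiv} f(v) - d_F(v)$ gives $d_H(v) \stackrel{2}{\equiv} f(v)$, so $H$ is an $f$-factor of $G$. Adding $d_F(v)$ to the displayed bounds for $d_{H'}(v)$ converts them into the desired bounds
$$d_F(v) + s(v) \le d_H(v) \le d_G(v) - s_0(v) - d_{F_0}(v),$$
completing the proof. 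There is no substantive obstacle here; the whole argument is a bookkeeping reduction, and the only thing to be slightly careful with is that the parity adjustment $f \mapsto f - d_F$ preserves the global parity condition so that Theorem~\ref{thm:2:partition} is legally invoked.
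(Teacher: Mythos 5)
Your proposal is correct and follows essentially the same route as the paper: pass to $G'=G\setminus E(F\cup F_0)$, shift the parity function to $f'=f-d_F \pmod 2$, apply Theorem~\ref{thm:2:partition}, and return $H=H'\cup F$. You simply spell out the hypothesis-checking (that $s+s_0<d_{G'}$ and the global parity of $f'$) that the paper leaves implicit.
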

\begin{proof}
{Let $G'=G\setminus E(F\cup F_0)$. For each vertex $v$, define $f'(v)=f(v)-d_{F}(v)$ (mod $2$).
Obviously, $\sum_{v\in V(G)} f'(v)\stackrel{2}{\equiv}0$.
Thus by Theorem~\ref{thm:2:partition}, the graph $G'$ has an $f'$-factor $H'$ such that 
for each vertex~$v$,
$s(v)\le d_{H'}(v)\le d_{G'}(v)-s_0(v).$
Hence $H'\cup F$ is the desired $f$-factor we are looking for.
}\end{proof}
\subsection{Graphs with tree-connectivity at least $m+m_0+1$}
The following theorem gives a sufficient partition-connectivity condition for the existence of tree-connected $f$-factors modulo $2$ 
with a given bound on degrees.
\begin{thm}\label{thm:modulo-2:partition-connected}
{Let $G$ be a graph and let $f:V(G)\rightarrow Z_2$ be a mapping satisfying $\sum_{v\in V(G)}f(v)\stackrel{2}{\equiv}0$ and 
let $s$ and $l_0$ be two  integer-valued functions on $V(G)$ with $s\le l_0$.
If $G$ is $(m+m_0+1, l_0)$-partition-connected, then it admits
an $m$-tree-connected $f$-factor $H$ such that its complement is $m_0$-tree-connected and for each vertex~$v$,
$$m+s(v)\le d_{H}(v)\le \max\{m+s(v)+1,d_G(v)-l_0(v)-m_0\}.$$
}\end{thm}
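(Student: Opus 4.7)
The plan is to imitate the proof of Theorem~\ref{thm:partition-connected}, substituting the modulo-$2$ existence result Corollary~\ref{cor:modulo-2:partition-connected} for Corollary~\ref{cor:F-F0}. The extra unit of partition-connectivity requested in the hypothesis, $m+m_0+1$ instead of $m+m_0$, is precisely what is needed to upgrade the residual factor from $(0,l_0)$-partition-connected to $(1,l_0)$-partition-connected, which is the hypothesis demanded by Corollary~\ref{cor:modulo-2:partition-connected}.

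First I would decompose $G$ into edge-disjoint pieces. Since $G$ is $(m+m_0+1,l_0)$-partition-connected, it splits into an $(m+m_0+1)$-tree-connected factor together with a $(0,l_0)$-partition-connected factor; by Nash--Williams/Tutte, the former contains $m+m_0+1$ edge-disjoint spanning trees. Group these as an $m$-tree-connected factor $F$, an $m_0$-tree-connected factor $F_0$, and one remaining spanning tree $T$, and absorb $T$ into the $(0,l_0)$-partition-connected factor to obtain a $(1,l_0)$-partition-connected factor $\mathcal{F}$. Then $G=F\cup F_0\cup\mathcal{F}$ is an edge-disjoint decomposition, so in particular $\mathcal{F}=G\setminus E(F\cup F_0)$. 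I would then apply Corollary~\ref{cor:modulo-2:partition-connected} to $G$ with this $F,F_0$ and with
\[
s'(v)=s(v)+m-d_F(v),\qquad s_0'(v)=d_G(v)-d_{F_0}(v)-\max\{m+s(v)+1,\,d_G(v)-l_0(v)-m_0\}.
\]
Using $d_F\ge m$ and $d_{F_0}\ge m_0$ one checks $s'\le s\le l_0$ and $s_0'\le l_0+m_0-d_{F_0}\le l_0$, and distinguishing which term realises the maximum in the definition of $s_0'$ gives $s'+s_0'\le d_{\mathcal{F}}-1$. Every hypothesis of Corollary~\ref{cor:modulo-2:partition-connected} is therefore satisfied, and it yields an $f$-factor $H$ containing $F$, disjoint from $F_0$, and obeying $d_F+s'\le d_H\le d_G-d_{F_0}-s_0'$; by design these bounds collapse to $m+s(v)\le d_H(v)\le\max\{m+s(v)+1,\,d_G(v)-l_0(v)-m_0\}$. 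Since $H\supseteq F$ it is $m$-tree-connected, and since $F_0\subseteq G\setminus E(H)$ its complement is $m_0$-tree-connected.

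The main delicacy is calibrating $s'$ and $s_0'$ so that the side conditions of Corollary~\ref{cor:modulo-2:partition-connected} hold together with the claimed degree bounds. The naive choice $s'=s$, $s_0'=l_0+m_0-d_{F_0}$ realises the upper bound $d_G-l_0-m_0$, but it can violate $s'+s_0'<d_{\mathcal{F}}$ in the tight regime where $d_G(v)$ is only slightly above $m+m_0+l_0(v)$. The $\max$ in the theorem's upper bound is engineered for exactly this regime: when $d_G(v)$ is small, one must retreat to the weaker bound $m+s(v)+1$, and writing $s_0'$ via the maximum of the two candidate upper bounds encodes the dichotomy in a single uniform formula, guaranteeing $s'+s_0'\le d_{\mathcal{F}}-1$ in both cases.
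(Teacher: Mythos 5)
Your proposal is correct and follows essentially the same route as the paper: the same decomposition into $F$, $F_0$, and a $(1,l_0)$-partition-connected remainder, and the same reduction to Corollary~\ref{cor:modulo-2:partition-connected}, with your $s_0'(v)=d_G(v)-d_{F_0}(v)-\max\{m+s(v)+1,\,d_G(v)-l_0(v)-m_0\}$ being algebraically identical to the paper's $\min\{d_G(v)-m-s(v)-1,\,l_0(v)+m_0\}-d_{F_0}(v)$. The verification of the side conditions and the collapse of the degree bounds match the paper's argument.
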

\begin{proof}
{Decomposed $G$ into 
an $m$-tree-connected factor $F$, an $m_0$-tree-connected factor $F_0$, and 
 a $(1,l_0)$-partition-connected factor $\mathcal{F}$.
For each vertex $v$, define $s'(v)=s(v)+m-d_F(v)$ and
$s'_0(v)=\min\{ d_G(v)-m-s(v)-1, l_0(v)+m_0\}-d_{F_0}(v)$.
Obviously, $s'(v)+s'_0(v)< d_G(v)-d_F(v)-d_{F_0}(v)$.
Since $d_F(v)\ge m$ and $d_{F_0}(v)\ge m_0$, we must also have $s'(v)\le s(v)\le l_0(v)$ and $s_0'(v)\le l_0(v)$.
Thus by Corollary~\ref{cor:modulo-2:partition-connected},
 the graph $G$ has an $f$-factor $H$ including $F$ excluding $F_0$ such that 
for each vertex~$v$,
$m+s(v)= d_F(v)+s'(v)\le d_{H}(v)\le d_G(v)-s'_0(v)-d_{F_0}(v)=
\max\{m+s(v)+1,d_G(v)-l_0(v)-m_0\}$.
Hence $H$ is the desired $f$-factor we are looking for.
}\end{proof}
\begin{cor}\label{thm:f-f+2}
{Let $G$ be a graph and let $f$ be an integer-valued function on $V(G)$ with $f\ge 2$ satisfying $\sum_{v\in V(G)}f(v)\stackrel{2}{\equiv}0$. If $G$ contains a $(2,f-2)$-partition-connected factor $G'$ satisfying $d_{G'}(v)\le 2f(v)+1$ for each vertex $v$,
 then $G$ admits a connected $\{f,f+2\}$-factor.
}\end{cor}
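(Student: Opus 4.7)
The plan is to reduce the corollary directly to Theorem~\ref{thm:modulo-2:partition-connected}, applied not to $G$ itself but to the dense partition-connected factor $G'$. Since $G'$ is $(2, f-2)$-partition-connected, I can take $m = 1$ and $m_0 = 0$, so the hypothesis $(m+m_0+1, l_0)$-partition-connected is met with $l_0(v) := f(v) - 2$ (which is nonnegative precisely because $f \ge 2$). The mapping $\bar{f}: V(G') \to Z_2$ defined by reducing $f$ modulo $2$ inherits the property $\sum_{v} \bar f(v) \equiv 0 \pmod 2$ from the hypothesis on $f$, so the parity requirement of Theorem~\ref{thm:modulo-2:partition-connected} is in place.

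For the degree bound, I will set $s(v) := f(v) - 2 = l_0(v)$, so the condition $s \le l_0$ is trivially satisfied. Theorem~\ref{thm:modulo-2:partition-connected} then provides a $1$-tree-connected (hence connected) factor $H$ of $G'$ with $d_H(v) \equiv f(v) \pmod 2$ and
\[
f(v) - 1 \;=\; m + s(v) \;\le\; d_H(v) \;\le\; \max\bigl\{m+s(v)+1,\; d_{G'}(v) - l_0(v) - m_0\bigr\} \;=\; \max\bigl\{f(v),\; d_{G'}(v) - f(v) + 2\bigr\}.
\]
Using the hypothesis $d_{G'}(v) \le 2f(v) + 1$, the right-hand side is at most $f(v) + 3$, so $d_H(v) \in [f(v)-1,\, f(v)+3]$.

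The last step is a parity cleanup: since $d_H(v) \equiv f(v) \pmod{2}$, the values $f(v) - 1$, $f(v) + 1$, and $f(v) + 3$ are all excluded, leaving only $d_H(v) \in \{f(v),\, f(v) + 2\}$. Because $H \subseteq G' \subseteq G$ and $H$ is $1$-tree-connected, it is the desired connected $\{f, f+2\}$-factor of $G$. There is no genuine obstacle here beyond matching parameters to invoke the earlier theorem; the only mildly subtle point is recognizing that forcing $s = l_0 = f-2$ is exactly what collapses the four-integer interval $[f-1, f+3]$ down to the two-element set allowed by the mod-$2$ constraint, and that the assumption $f \ge 2$ is needed solely to keep $l_0$ nonnegative so that Theorem~\ref{thm:modulo-2:partition-connected} applies.
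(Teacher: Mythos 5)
Your proof is correct and follows essentially the same route as the paper: both apply Theorem~\ref{thm:modulo-2:partition-connected} to the factor $G'$ with $m=1$, $m_0=0$, and $s=l_0=f-2$, obtain a connected parity factor $H$ with $f(v)-1\le d_H(v)\le f(v)+3$, and then use the mod-$2$ constraint to conclude $d_H(v)\in\{f(v),f(v)+2\}$. Your write-up is in fact slightly more careful than the paper's (which contains a typo, writing $s_0$ for $s$), and correctly identifies the role of the hypotheses $f\ge 2$ and $d_{G'}(v)\le 2f(v)+1$.
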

\begin{proof}
{By applying Theorem~\ref{thm:modulo-2:partition-connected} with $m=1$, $m_0=0$, and $s_0=l_0=f-2$, the graph $G'$ has a connected $f$-factor $H$ such that for each vertex $v$, $f(v) -1\le d_H(v)\le f(v)+3$. Hence $H$ is a connected $\{f,f+2\}$-factor of $G$.
}\end{proof}
\subsection{Graphs with edge-connectivity at least $2m+2m_0+2$}
The following theorem gives a sufficient edge-connectivity condition for the existence of tree-connected $f$-factors modulo $2$ 
with a given bound on degrees.
\begin{thm}\label{thm:modulo2:main}
{Let $G$ be a graph and let $f:V(G)\rightarrow Z_2$ be a mapping with
$\sum_{v\in V(G)}f(v) \stackrel{2}{\equiv} 0$. 
If $G$ is $(2m+2m_0+2)$-edge-connected, then it has an $m$-tree-connected $f$-factor $H$ such that its complement is
 $m_0$-tree-connected and for each vertex~$v$,
$$\lfloor \frac{d_{G}(v)}{2}\rfloor-1-m_0\le d_H(v)\le \lceil \frac{d_{G}(v)}{2}\rceil+1+m.$$
Furthermore, for an arbitrary given vertex $z$, $d_H(z)$ can be assigned to any plausible integer value in whose interval.
}\end{thm}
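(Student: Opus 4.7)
The plan is to combine Theorem~\ref{thm:basic:M} with Theorem~\ref{thm:modulo-2:partition-connected}. Since $G$ is $(2m+2m_0+2)$-edge-connected, I would first apply Theorem~\ref{thm:basic:M} (with $M=\emptyset$ and with the parameter played by $m+m_0+1$) to conclude that $G$ is $(m+m_0+1,\,l_0)$-partition-connected, where $l_0(v) = \lfloor d_G(v)/2\rfloor - (m+m_0+1)$ for every $v \in V(G)\setminus\{z\}$ and, invoking the furthermore clause, $l_0(z) = \lceil d_G(z)/2\rceil$.

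Next I would apply Theorem~\ref{thm:modulo-2:partition-connected} with a carefully chosen function $s$: set $s(v) = l_0(v)$ for every $v \neq z$, and choose $s(z)$ in terms of the target integer value $t$ that $d_H(z)$ is supposed to attain (where $t$ lies in the claimed interval and satisfies $t \equiv f(z) \pmod 2$). Explicitly, take $s(z) = t-m$ when $t \le \lceil d_G(z)/2\rceil + m$, and $s(z) = l_0(z) = \lceil d_G(z)/2\rceil$ in the single remaining case $t = \lceil d_G(z)/2\rceil + m + 1$. Both choices satisfy $s \le l_0$, so Theorem~\ref{thm:modulo-2:partition-connected} produces an $m$-tree-connected $f$-factor $H$ whose complement is $m_0$-tree-connected and satisfying $m + s(v) \le d_H(v) \le \max\{m+s(v)+1,\,d_G(v)-l_0(v)-m_0\}$ at every vertex $v$.

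For a vertex $v \neq z$, these two bounds evaluate to $\lfloor d_G(v)/2\rfloor - 1 - m_0$ and $\max\{\lfloor d_G(v)/2\rfloor - m_0,\ \lceil d_G(v)/2\rceil + 1 + m\} = \lceil d_G(v)/2\rceil + 1 + m$, matching the claim. For the vertex $z$, a short case check suffices: in the first sub-case the interval for $d_H(z)$ comes out as $\{t, t+1\}$ (since $t \ge \lfloor d_G(z)/2\rfloor - 1 - m_0$ forces the maximum in the upper bound to equal $t+1$), while in the second sub-case the interval is $\{t-1, t\}$; in either situation the modulo-$2$ congruence pins down $d_H(z) = t$.

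I do not expect a real obstacle: once the two imported theorems are aligned, the argument is bookkeeping. The only mild subtlety is the behaviour at the top of the plausible interval for $d_H(z)$, where the naive choice $s(z) = t - m$ would violate $s(z) \le l_0(z)$; there one must instead take $s(z) = l_0(z)$ and rely on the upper bound $m + s(z) + 1$ dominating $d_G(z) - l_0(z) - m_0$ in order to reach $t$.
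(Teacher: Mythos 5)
Your proposal is correct and follows essentially the same route as the paper: reduce to $(m+m_0+1,l_0)$-partition-connectivity via Theorem~\ref{thm:basic:M} and then invoke Theorem~\ref{thm:modulo-2:partition-connected} with $s=l_0$ off $z$ and a tailored $s(z)$. Your explicit two-case choice of $s(z)$ (in particular taking $s(z)=l_0(z)$ at the top endpoint $t=\lceil d_G(z)/2\rceil+m+1$) is if anything slightly more careful than the paper's ``arbitrary $s(z)<\lceil d_G(z)/2\rceil$'' at the extremes of the interval.
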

\begin{proof}
{By Theorem~\ref{thm:basic:M}, 
the graph $G$ is $(m+m_0+1, l_0)$-partition-connected such that 
for each $v\in V(G)\setminus \{z\}$, 
$l_0(v)=\lfloor d_G(v)/2 \rfloor-m-m_0-1$, and also $l_0(z)=\lceil d_G(z)/2 \rceil$. 
By Theorem~\ref{thm:modulo-2:partition-connected},
the graph $G$ has an $m$-tree-connected $f$-factor $H$ such that its complement is $m_0$-tree-connected and for each $v\in V(G)\setminus \{z\}$, 
 $\lfloor d_G(v)/2\rfloor-1-m_0 =m+l_0(v)\le d_{H}(v)\le \max\{m+l_0(v)+1,d_G(v)-l_0(v)-m_0\}=d_G(v)-l_0(v)-m_0 =
\lceil d_G(v)/2\rceil+1+m$, 
and also
$m+s(z)\le d_{H}(z)\le \max\{m+s(z)+1,d_G(z)-l_0(z)-m_0\}
=m+s(z)+1$, where $s(z)$ is an arbitrary integer 
with $ \lfloor d_G(z)/2\rfloor-m_0-m\le s(z)< \lceil d_G(z)/2 \rceil= l_0(z)$.
Hence $H$ is the desired $f$-factor we are looking for.
}\end{proof}
\begin{cor}
{Every $6$-edge-connected $2r$-regular graph $G$ with $(r+1)|V(G)|$ even can be decomposed into two connected $\{r-1,r+1\}$-factors.
}\end{cor}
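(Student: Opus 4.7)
The plan is to apply Theorem~\ref{thm:modulo2:main} with $m=m_0=1$ and choose the parity function $f$ so that the degree interval collapses to exactly $\{r-1,r+1\}$.

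First I would set $m=m_0=1$ and check that the edge-connectivity hypothesis matches: $2m+2m_0+2=6$, so $G$ is $(2m+2m_0+2)$-edge-connected. Next, since we want $d_H(v)\in\{r-1,r+1\}$, which forces $d_H(v)\equiv r+1\pmod{2}$, I would define $f\colon V(G)\to Z_2$ by $f(v)\equiv r+1\pmod{2}$ for every vertex $v$. The global parity condition reads
\[
\sum_{v\in V(G)}f(v)\equiv (r+1)|V(G)|\equiv 0\pmod{2},
\]
which holds by the hypothesis that $(r+1)|V(G)|$ is even.

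Then I would invoke Theorem~\ref{thm:modulo2:main}: it yields an $m$-tree-connected (hence connected) $f$-factor $H$ whose complement is $m_0$-tree-connected (hence connected), satisfying
\[
\left\lfloor\tfrac{d_G(v)}{2}\right\rfloor-1-m_0\le d_H(v)\le\left\lceil\tfrac{d_G(v)}{2}\right\rceil+1+m.
\]
Since $G$ is $2r$-regular, this reduces to $r-2\le d_H(v)\le r+2$. Combined with $d_H(v)\equiv r+1\pmod{2}$, the only admissible values are $r-1$ and $r+1$, so $H$ is a connected $\{r-1,r+1\}$-factor. For the complement $H'=G\setminus E(H)$, each vertex satisfies $d_{H'}(v)=2r-d_H(v)\in\{r-1,r+1\}$, and $H'$ is connected as well; so $H'$ is also a connected $\{r-1,r+1\}$-factor, giving the required decomposition $G=H\cup H'$.

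There is essentially no obstacle here beyond making the right parameter choices; the work is already absorbed into Theorem~\ref{thm:modulo2:main}. The only delicate point to mention is the parity interaction: the bound $[r-2,r+2]$ alone is too wide, but the modulo $2$ factor condition eliminates the values $r-2$, $r$, and $r+2$, pinning the degrees to $\{r-1,r+1\}$ on both sides of the decomposition simultaneously (since $2r$-regularity transfers the bound to the complement automatically). The hypothesis $(r+1)|V(G)|$ even is exactly what is needed to legitimize the choice of $f$.
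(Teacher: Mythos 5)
Your proof is correct and follows exactly the paper's route: the paper's own proof is the one-line instruction to apply Theorem~\ref{thm:modulo2:main} with $m=m_0=1$ and $f\equiv r-1\pmod 2$, which is the same mapping as your $f\equiv r+1\pmod 2$. Your write-up simply makes explicit the parity check, the collapse of the interval $[r-2,r+2]$ to $\{r-1,r+1\}$, and the transfer of the bound to the complement via $2r$-regularity, all of which the paper leaves implicit.
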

\begin{proof}
{Apply Theorem~\ref{thm:modulo2:main} with $m=m_0=1$ and $f=r-1$ (mod $2$).
}\end{proof}
The following corollary improves a result of Jaeger (1979)~\cite{Jaeger-1979}, on the existence of spanning Eulerian subgraphs in $4$-edge-connected graphs, together with a generalization of it due to Catlin (1988)~\cite{Catlin-1988}.
\begin{cor}\label{cor:k=2:tree-connected}
{Let $G$ be a graph and let $f:V(G)\rightarrow Z_2$ be a mapping with
$\sum_{v\in V(G)}f(v) \stackrel{2}{\equiv} 0$. 
If $G$ is $4$-edge-connected, then it has a connected $f$-factor $H$ such that for each vertex~$v$,
$$\lfloor \frac{d_{G}(v)}{2}\rfloor-1\le d_H(v)\le \lceil \frac{d_{G}(v)}{2}\rceil+2.$$
}\end{cor}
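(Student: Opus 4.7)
\begin{proofF}{Corollary~\ref{cor:k=2:tree-connected}}
The plan is simply to invoke Theorem~\ref{thm:modulo2:main} with the parameters $m = 1$ and $m_0 = 0$. With these choices the required edge-connectivity is $2m + 2m_0 + 2 = 4$, which is exactly the hypothesis of the corollary, and the mapping $f:V(G)\to Z_2$ with $\sum_{v\in V(G)} f(v) \stackrel{2}{\equiv} 0$ is precisely the hypothesis required on $f$.

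Applying Theorem~\ref{thm:modulo2:main} then yields an $m$-tree-connected $f$-factor $H$ whose complement is $m_0$-tree-connected. Since $m=1$, $m$-tree-connectedness is connectedness, so $H$ is a connected $f$-factor (the vacuous condition $m_0 = 0$ on the complement is simply discarded). The degree bounds given by Theorem~\ref{thm:modulo2:main} specialize to
$$\lfloor \tfrac{d_G(v)}{2}\rfloor - 1 - 0 \le d_H(v) \le \lceil \tfrac{d_G(v)}{2}\rceil + 1 + 1,$$
which is exactly the claimed range $\lfloor d_G(v)/2\rfloor - 1 \le d_H(v) \le \lceil d_G(v)/2\rceil + 2$. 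There is no real obstacle here: the statement is an immediate specialization of the preceding theorem, and all the work has already been done in establishing Theorem~\ref{thm:modulo2:main} via Theorem~\ref{thm:basic:M} and Theorem~\ref{thm:modulo-2:partition-connected}.
\end{proofF}
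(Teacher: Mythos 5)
Your proposal is correct and coincides with the paper's own proof, which likewise obtains the corollary by applying Theorem~\ref{thm:modulo2:main} with $m=1$ and $m_0=0$; the specialization of the edge-connectivity requirement and of the degree bounds is exactly as you describe.
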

\begin{proof}
{Apply Theorem~\ref{thm:modulo2:main} with $m=1$ and $m_0=0$.
}\end{proof}
\begin{cor}
{Every $4$-edge-connected graph $G$ 
 has a connected factor $H$ such that for each vertex~$v$,
$\lfloor d_G(v)/4\rfloor\le d_H(v) \le \lceil (d_G(v)-2)/4\rceil+2.$
}\end{cor}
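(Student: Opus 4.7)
The plan is to apply a ``halve twice'' strategy: first use Corollary~\ref{cor:k=2:tree-connected} to reduce vertex degrees to roughly $d_G(v)/2$ while ensuring that they all become even, and then apply Theorem~\ref{thm:bounded:partition} to the resulting Eulerian factor to halve the degrees once more.

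Step one: I would apply Corollary~\ref{cor:k=2:tree-connected} to $G$ with the constant mapping $f\equiv 0$, which is trivially compatible. This hands us a connected factor $H_1$ of $G$ in which every $d_{H_1}(v)$ is even and
\[
\left\lfloor\tfrac{d_G(v)}{2}\right\rfloor-1\le d_{H_1}(v)\le \left\lceil\tfrac{d_G(v)}{2}\right\rceil+2.
\]
Since $H_1$ is connected with all vertex degrees even, it is spanning Eulerian, and hence $2$-edge-connected. Step two: I would apply Theorem~\ref{thm:bounded:partition} with $m=1$ and $m_0=0$ to the graph $H_1$. This produces a connected factor $H$ of $H_1$ (and therefore of $G$) satisfying
\[
\left\lfloor\tfrac{d_{H_1}(v)}{2}\right\rfloor\le d_H(v)\le \left\lceil\tfrac{d_{H_1}(v)}{2}\right\rceil+1
\]
for every vertex $v$. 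This $H$ is our candidate.

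It remains to translate the bounds above, which are in terms of $d_{H_1}(v)$, into the claimed bounds in terms of $d_G(v)$. Because $d_{H_1}(v)$ is an \emph{even} integer lying in $[\lfloor d_G(v)/2\rfloor-1,\,\lceil d_G(v)/2\rceil+2]$, it must be at least the smallest even integer $\ge\lfloor d_G(v)/2\rfloor-1$ and at most the largest even integer $\le\lceil d_G(v)/2\rceil+2$. A short case analysis on $d_G(v)$ modulo $4$ shows that $d_{H_1}(v)\ge 2\lfloor d_G(v)/4\rfloor$ and $d_{H_1}(v)/2+1\le\lceil(d_G(v)-2)/4\rceil+2$, so combining these with the displayed bounds on $d_H(v)$ yields exactly $\lfloor d_G(v)/4\rfloor\le d_H(v)\le\lceil(d_G(v)-2)/4\rceil+2$. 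The only step needing care is this final case analysis; the savings of one unit on the upper bound for three of the four residue classes modulo $4$ comes precisely from the parity constraint imposed by the choice $f\equiv 0$ in step one, which is why a naive double application of an edge-connectivity result (without forcing evenness at the intermediate stage) falls one unit short of the claimed upper bound.
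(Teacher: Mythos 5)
Your proof is correct and follows essentially the same route as the paper's: apply Corollary~\ref{cor:k=2:tree-connected} with $f\equiv 0$ to get a connected even-degree (hence spanning Eulerian and $2$-edge-connected) factor, then apply Theorem~\ref{thm:bounded:partition} with $m=1$, $m_0=0$ and finish by the residue computation that exploits the evenness of the intermediate degrees. One cosmetic quibble: the parity constraint actually buys the extra unit on the upper bound in two, not three, of the residue classes modulo $4$ (namely $d_G(v)\equiv 1,2 \pmod 4$; for $d_G(v)\equiv 0,3$ the naive bound already suffices), but this does not affect the argument.
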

\begin{proof}
{By Corollary~\ref{cor:k=2:tree-connected}, the graph $G$ has a connected factor $H_0$ with even degrees
 such that for each vertex~$v$,
$(d_G(v)+1)/2-2\le d_{H_0}(v) \le (d_G(v)+1)/2+2$.
Since $H_0$ is $2$-edge-connected, by Theorem~\ref{thm:bounded:partition},
the graph $H_0$ has a connected factor $H$ such that for each vertex $v$, 
$d_{H_0}(v)/2\le d_H(v) \le d_{H_0}(v)/2+1.$
These inequalities can complete the proof.
}\end{proof}
\begin{cor}
{Every $4$-edge-connected $2r$-regular graph $G$ with $(r+1)|V(G)|$ even admits a connected $\{r-1,r+1\}$-factor.
}\end{cor}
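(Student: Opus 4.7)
The plan is to invoke Corollary~\ref{cor:k=2:tree-connected} directly, choosing the mapping $f$ so that the parity constraint on $d_H(v)$ together with the degree bounds force the degrees into $\{r-1,r+1\}$.

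First I would set $f:V(G)\rightarrow Z_2$ by $f(v)\equiv r+1 \pmod 2$ for every vertex $v$. The hypothesis that $(r+1)|V(G)|$ is even then gives exactly $\sum_{v\in V(G)} f(v)\equiv 0\pmod 2$, which is the parity condition needed to apply Corollary~\ref{cor:k=2:tree-connected}. Since $G$ is $4$-edge-connected, that corollary yields a connected $f$-factor $H$ such that for each vertex $v$,
\[
\lfloor d_G(v)/2\rfloor - 1 \le d_H(v) \le \lceil d_G(v)/2\rceil + 2.
\]

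Next I would use $2r$-regularity to simplify: $d_G(v)=2r$ gives $r-1\le d_H(v)\le r+2$, so $d_H(v)\in\{r-1,r,r+1,r+2\}$. The defining congruence $d_H(v)\equiv f(v)\equiv r+1\pmod 2$ then eliminates the two values in this range whose parity equals $r\pmod 2$, leaving precisely $d_H(v)\in\{r-1,r+1\}$ (when $r$ is odd these are the two even values in the range; when $r$ is even, the two odd values). Together with the connectivity of $H$ guaranteed by the corollary, this produces the desired connected $\{r-1,r+1\}$-factor.

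There is essentially no obstacle: the entire content is the observation that the parity-forcing from $f$ thins the four-element interval $\{r-1,r,r+1,r+2\}$ to the two-element set $\{r-1,r+1\}$, and that the compatibility condition $\sum f(v)\equiv 0\pmod 2$ coincides with the stated hypothesis $(r+1)|V(G)|$ even. The only thing to double-check is the choice of $f$ in the two parity cases for $r$, which is handled uniformly by $f\equiv r+1\pmod 2$.
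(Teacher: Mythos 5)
Your proof is correct and follows exactly the paper's route: the paper also applies Corollary~\ref{cor:k=2:tree-connected}, choosing $f\equiv r-1\pmod 2$, which is the same mapping as your $f\equiv r+1\pmod 2$. The parity check $\sum f(v)\equiv (r+1)|V(G)|\equiv 0\pmod 2$ and the thinning of $\{r-1,r,r+1,r+2\}$ to $\{r-1,r+1\}$ are precisely the intended argument.
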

\begin{proof}
{Apply Corollary~\ref{cor:k=2:tree-connected} with $f=r-1$ (mod $2$).
}\end{proof}
The following corollary provides a minor refinement for Theorem 9 in \cite{Hasanvand-2015}.
\begin{cor}
{Every $4$-edge-connected $r$-regular graph has a connected $\{4,6\}$-factor, where $8\le r\le 10$.
}\end{cor}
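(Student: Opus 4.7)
The plan is to apply Corollary~\ref{cor:k=2:tree-connected} with the constant mapping $f:V(G)\rightarrow Z_2$ given by $f(v)=0$ for every vertex~$v$. The compatibility condition $\sum_{v\in V(G)}f(v)\stackrel{2}{\equiv}0$ then holds trivially, and since $G$ is $4$-edge-connected by hypothesis, the corollary yields a connected $f$-factor $H$, i.e., a connected factor all of whose vertex degrees are even, satisfying
$$\lfloor d_G(v)/2\rfloor -1 \le d_H(v)\le \lceil d_G(v)/2\rceil+2$$
at every vertex~$v$.

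The only remaining step is to check that for $r\in\{8,9,10\}$ the even integers in the interval $[\lfloor r/2\rfloor-1,\lceil r/2\rceil+2]$ are exactly $\{4,6\}$. For $r=8$ the interval is $[3,6]$, for $r=9$ it is $[3,7]$, and for $r=10$ it is $[4,7]$; in each case the even values lying in the interval are precisely $4$ and $6$. Since $G$ is $r$-regular and $H$ has even degrees, every vertex of $H$ has degree in $\{4,6\}$, so $H$ is the desired connected $\{4,6\}$-factor.

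There is essentially no obstacle here beyond verifying the arithmetic of the degree window in each of the three cases; the substance of the argument is entirely absorbed by Corollary~\ref{cor:k=2:tree-connected}, whose role is to simultaneously enforce connectedness, the parity constraint (so that the degrees are even and hence forced to lie in $\{4,6\}$ rather than merely $\{4,5,6\}$ or $\{3,4,5,6\}$), and the tight degree bounds that rule out the values $3,5,7$.
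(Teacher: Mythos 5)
Your proposal is correct and is exactly the paper's argument: the paper's proof consists of applying Corollary~\ref{cor:k=2:tree-connected} with $f=0$ (mod $2$), and your verification that the even integers in $[\lfloor r/2\rfloor-1,\lceil r/2\rceil+2]$ are precisely $4$ and $6$ for $r\in\{8,9,10\}$ is the arithmetic the paper leaves implicit.
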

\begin{proof}
{Apply Corollary~\ref{cor:k=2:tree-connected} with $f=0$ (mod $2$).
}\end{proof}
%
%
%
%
%
%
%
%
%
\section{Factors modulo $k$: bipartite graphs}
\label{sec:bipartite}
\subsection{Preliminary results}
In this section, we consider the existence of connected $f$-factors modulo $k$ with bounded degrees in bipartite graphs.
For this purpose, we need the following recent result on the existence $f$-factors modulo $k$ with restricted degrees.
\begin{thm}{\rm (\cite{ModuloFactorBounded})}\label{thm:bi:k:partition}
{Let $G$ be a bipartite graph with $z\in V(G)$, let $k$ be an integer, $k\ge 2$, and let $f:V(G)\rightarrow Z_k$ be a compatible mapping.
Let $s$, $s_0$, and $l_0$ be three integer-valued functions on $V(G)$ satisfying $s+s_0+k-1\le d_G$ and
$\max\{s,s_0\}\le l_0+(k-1)\bar{\chi}_z$. 
If $G$ is $(2k-2, l_0)$-partition-connected, then it admits
an $f$-factor $H$ such that for each vertex $v$,
$$s(v) \le d_H(v) \le d_G(v)-s_0(v).$$
}\end{thm}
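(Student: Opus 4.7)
My plan is to combine an orientation argument with a modulo-sensitive version of the $(g,f)$-factor machinery. First I would invoke the $(2k-2,l_0)$-partition-connectivity to decompose $G$ as an edge-disjoint union $T\cup\mathcal{F}$, where $T$ is $(2k-2)$-tree-connected and $\mathcal{F}$ is $(0,l_0)$-partition-connected. Since $T$ contains $2(k-1)$ edge-disjoint spanning trees, Nash-Williams' orientation theorem supplies an orientation of $T$ with $d_T^+(v),d_T^-(v)\ge k-1$ for every vertex $v$, while $\mathcal{F}$ admits an orientation with $d_{\mathcal{F}}^-(v)\ge l_0(v)$. Concatenating these yields an orientation of $G$ in which $d_G^-(v)\ge (k-1)+l_0(v)$ and $d_G^+(v)\le d_G(v)-(k-1)-l_0(v)$ at every $v$.

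Next I would convert the modulo requirement into an interval-factor problem. For each vertex $v$, let $a(v)$ be the smallest integer with $a(v)\ge s(v)$ and $a(v)\equiv f(v)\pmod{k}$, and let $b(v)$ be the largest integer with $b(v)\le d_G(v)-s_0(v)$ and $b(v)\equiv f(v)\pmod{k}$; the hypothesis $s+s_0+k-1\le d_G$ guarantees $a\le b$, while $\max\{s,s_0\}\le l_0+(k-1)\bar{\chi}_z$ supplies sufficient room at the distinguished vertex~$z$. A modulo $k$ refinement of Lov\'{a}sz's Theorem~\ref{thm:Lovasz} then reduces the problem to a Lov\'{a}sz-style inequality that must be verified for every pair of disjoint vertex subsets $A,B\subseteq V(G)$, the intended factor being one with $d_H(v)\in\{a(v),a(v)+k,\ldots,b(v)\}$.

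Finally I would verify that inequality by invoking the orientation estimates from the first paragraph in the same manner as in the proof of Corollary~\ref{cor:F-F0}: the $(k-1)$ in-degree/out-degree reserve from $T$ combined with the $l_0(v)$ reserve from $\mathcal{F}$ supplies the needed pointwise slack, and the compatibility of $f$ with the bipartition $(X,Y)$ of $G$ ensures that the residues across $X$ and $Y$ balance globally modulo $k$. The main obstacle, I anticipate, is the modulo $k$ refinement of Lov\'{a}sz's theorem itself: while the bipartite structure reduces it to a bipartite $T$-join-style flow problem, the combinatorial book-keeping of residues across the two sides of the bipartition is the delicate part where the compatibility of $f$ and the bipartite hypothesis are genuinely exploited.
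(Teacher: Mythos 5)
This theorem is not proved in the present paper at all: it is imported verbatim from \cite{ModuloFactorBounded} and used here as a black box (e.g.\ in Corollary~\ref{cor:modulo-k:partition-connected} and Theorem~\ref{thm:m+4k-4}), so there is no in-paper proof to match your argument against. Judged on its own terms, your first paragraph is fine and consistent with how the surrounding results are handled: the decomposition of a $(2k-2,l_0)$-partition-connected graph into a $(2k-2)$-tree-connected part $T$ and a $(0,l_0)$-partition-connected part $\mathcal{F}$, the orientation of $T$ with $d_T^{\pm}(v)\ge k-1$ (via Nash--Williams, since $T$ is $(2k-2)$-edge-connected), and the observation that $s+s_0+k-1\le d_G$ makes each residue class of $f(v)$ hit the interval $[s(v),\,d_G(v)-s_0(v)]$ are all correct.

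The gap is the central tool you invoke in the second paragraph: a ``modulo $k$ refinement of Lov\'{a}sz's theorem'' that would characterize the existence of a factor with $d_H(v)\in\{a(v),a(v)+k,\ldots,b(v)\}$ by a cut inequality over pairs $(A,B)$. No such theorem exists for $k\ge 3$. Lov\'{a}sz's $(g,f)$-parity factor theorem covers only $k=2$ (which is also why $T$-join and flow reformulations stop working beyond parity), and for $k\ge 3$ deciding whether a graph has a factor with all degrees in a prescribed residue class modulo $k$ is NP-hard in general, so no Tutte/Lov\'{a}sz-style good characterization into which you could ``feed'' the orientation estimates can be available. This is precisely why Thomassen's modulo-$k$ factor theorem and its refinement quoted here require high connectivity in an essentially constructive way: the edge-disjoint spanning trees are not used to verify a deficiency inequality but to iteratively adjust degrees modulo $k$ (shifting degree between the two sides of the bipartition along tree paths, one residue step at a time), with the functions $s$, $s_0$, $l_0$ controlling how far each adjustment can push a vertex's degree. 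Your plan would need to be rebuilt around such an adjustment procedure; as written, the reduction in step two has nothing to reduce to.
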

Before stating the main result, let us derive the following generalization of Theorem~\ref{thm:bi:k:partition}.
\begin{cor}\label{cor:modulo-k:partition-connected}
{Let $G$ be a bipartite graph with $z\in V(G)$, let $k$ be an integer, $k\ge 2$, and let $f:V(G)\rightarrow Z_k$ be a compatible mapping.
Let $F$ and $F_0$ be two edge-disjoint factors of $G$, and
let $s$, $s_0$, and $l_0$ be three integer-valued functions on $V(G)$ satisfying 
 $s+s_0+k-1\le d_G-d_{F}-d_{F_0}$ and $\max\{s,s_0\} \le l_0+(k-1)\bar{\chi}_z$.
If $G\setminus E(F\cup F_0)$ is $(2k-2, l_0)$-partition-connected, then $G$ admits
an $f$-factor $H$ including $F$ excluding $F_0$ such that for each vertex $v$,
$$d_{F}(v)+s(v)\le d_{H}(v)\le d_G(v)-s_0(v)-d_{F_0}(v).$$
}\end{cor}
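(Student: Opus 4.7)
The plan is to reduce the statement to Theorem~\ref{thm:bi:k:partition} by a shift of $f$ that absorbs the prescribed factor $F$, mirroring the strategy of Corollary~\ref{cor:modulo-2:partition-connected}. Set $G' = G\setminus E(F\cup F_0)$ and define $f'\colon V(G)\rightarrow Z_k$ by $f'(v) \stackrel{k}{\equiv} f(v) - d_F(v)$. The first task is to verify that $f'$ is compatible with $G'$. Since $G$ is bipartite, so is $F$; letting $(X,Y)$ denote the canonical bipartition, double-counting yields $\sum_{v\in X} d_F(v) = |E(F)| = \sum_{v\in Y} d_F(v)$, and hence $\sum_{v\in X} f'(v) - \sum_{v\in Y} f'(v) \stackrel{k}{\equiv} \sum_{v\in X} f(v) - \sum_{v\in Y}f(v) \stackrel{k}{\equiv} 0$ by compatibility of $f$.

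Next, I would verify that Theorem~\ref{thm:bi:k:partition} applies to $G'$ with the same data $s$, $s_0$, $l_0$, $z$. The assumption $s + s_0 + k - 1 \le d_G - d_F - d_{F_0}$ translates directly into $s + s_0 + k - 1 \le d_{G'}$; the condition $\max\{s,s_0\} \le l_0 + (k-1)\bar{\chi}_z$ is already in the required form; and the $(2k-2, l_0)$-partition-connectivity of $G'$ is part of the hypothesis. Theorem~\ref{thm:bi:k:partition} then produces an $f'$-factor $H'$ of $G'$ satisfying $s(v) \le d_{H'}(v) \le d_{G'}(v) - s_0(v)$ for every vertex $v$.

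Finally, set $H = H'\cup F$. Since $H'\subseteq G'$ is edge-disjoint from both $F$ and $F_0$, the factor $H$ includes $F$ and excludes $F_0$. The degree bounds transfer as $d_F(v) + s(v) \le d_H(v) \le d_F(v) + d_{G'}(v) - s_0(v) = d_G(v) - d_{F_0}(v) - s_0(v)$, and the congruence $d_H(v) = d_{H'}(v) + d_F(v) \stackrel{k}{\equiv} f'(v) + d_F(v) \stackrel{k}{\equiv} f(v)$ confirms that $H$ is indeed an $f$-factor. The only delicate step is the compatibility verification for $f'$, which is immediate here because bipartiteness of $F$ collapses the balance condition to a single identity; any non-bipartite analogue would have to grapple with the more subtle definition of compatibility over arbitrary bipartitions.
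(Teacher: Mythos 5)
Your proposal is correct and follows essentially the same route as the paper: pass to $G'=G\setminus E(F\cup F_0)$, shift to $f'=f-d_F \pmod k$, apply Theorem~\ref{thm:bi:k:partition}, and return $H=H'\cup F$. The only difference is that you spell out the compatibility of $f'$ via the double-counting identity for the canonical bipartition, a step the paper dismisses as obvious.
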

\begin{proof}
{Let $G'=G- E(F\cup F_0)$. For each vertex $v$, define $f'(v)=f(v)-d_{F}(v)$ (mod $k$).
Obviously, $f'$ is compatibly with $G$.
Thus by Theorem~\ref{thm:bi:k:partition}, the graph $G'$ has an $f'$-factor $H'$ such that 
for each vertex $v$,
$s(v) \le d_{H'}(v) \le d_{G'}(v)-s_0(v)$.
Hence $H'\cup F$ is the desired $f$-factor we are looking for.
}\end{proof}
\subsection{Graphs with tree-connectivity at least $m+m_0+2k-2$}
The following theorem gives a sufficient partition-connectivity condition for the existence of tree-connected $f$-factors modulo $k$ 
in bipartite graphs with a given bound on degrees.
\begin{thm}\label{thm:bipartite:partition-connected}
{Let $G$ be a bipartite graph with $z\in V(G)$, let $k$ be an integer, $k\ge 2$, and let $f:V(G)\rightarrow Z_k$ be a compatible mapping.
Let $s$ and $l_0$ be two integer-valued functions on $V(G)$ with $s \le l_0+(k-1)\bar{\chi}_z$.
If $G$ is $(m+m_0+2k-2, l_0)$-partition-connected, then it admits
an $m$-tree-connected $f$-factor $H$ such that its complement is $m_0$-tree-connected and for each vertex $v$,
$$m+s(v) \le d_H(v) \le \max\{m+s(v)+k-1, d_G(v)-l_0(v)-(k-1)\bar{\chi}_z(v)-m_0\}.$$
}\end{thm}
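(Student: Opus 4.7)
The plan is to follow the structural blueprint of the proof of Theorem~\ref{thm:modulo-2:partition-connected}, substituting the modulo-$k$ bipartite tool Corollary~\ref{cor:modulo-k:partition-connected} for its modulo-$2$ counterpart. That is, I would peel off an $m$-tree-connected factor and an $m_0$-tree-connected factor from $G$ and push them into the ``$F$'' and ``$F_0$'' slots of the corollary, letting the remaining $(2k-2,l_0)$-partition-connected piece carry the compatibility argument.

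Concretely, first invoke the $(m+m_0+2k-2,l_0)$-partition-connectivity of $G$ to decompose it into an $m$-tree-connected factor $F$, an $m_0$-tree-connected factor $F_0$, and a factor $\mathcal{F}=G\setminus E(F\cup F_0)$ that is $(2k-2,l_0)$-partition-connected. For each vertex $v$, set
$$s'(v) = s(v)+m-d_F(v),$$
$$s'_0(v) = \min\bigl\{d_G(v)-m-s(v)-k+1,\; l_0(v)+(k-1)\bar{\chi}_z(v)+m_0\bigr\}-d_{F_0}(v).$$
These are rigged so that $d_F(v)+s'(v)$ coincides with the target lower bound $m+s(v)$ and $d_G(v)-d_{F_0}(v)-s'_0(v)$ coincides with the target upper bound $\max\{m+s(v)+k-1,\; d_G(v)-l_0(v)-(k-1)\bar{\chi}_z(v)-m_0\}$.

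To feed this into Corollary~\ref{cor:modulo-k:partition-connected}, I would verify three routine items. Using $d_F(v)\ge m$ and $d_{F_0}(v)\ge m_0$ from tree-connectivity, the bound $s'(v)\le s(v)\le l_0(v)+(k-1)\bar{\chi}_z(v)$ is immediate, and $s'_0(v)\le l_0(v)+(k-1)\bar{\chi}_z(v)$ follows from the second branch of the minimum. Keeping only the first branch in $s'_0$ then yields $s'(v)+s'_0(v)+k-1\le d_G(v)-d_F(v)-d_{F_0}(v)$. Corollary~\ref{cor:modulo-k:partition-connected} then delivers an $f$-factor $H$ containing $F$ and disjoint from $F_0$, and the required tree-connectivity of $H$ and of its complement come for free from $F\subseteq H$ and $F_0\subseteq G\setminus E(H)$. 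The only place that requires thought is the precise definition of $s'_0$, where the vertex $z$ must be treated asymmetrically via $\bar{\chi}_z$ so that both inequalities of Corollary~\ref{cor:modulo-k:partition-connected} line up simultaneously; beyond that, the argument is a direct modulo-$k$ lift of the modulo-$2$ proof and I anticipate no genuine obstacle.
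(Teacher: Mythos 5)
Your proposal is correct and follows essentially the same route as the paper's own proof: the same decomposition into $F$, $F_0$, and a $(2k-2,l_0)$-partition-connected remainder, the same definitions of $s'$ and $s'_0$ (the paper writes $s'_0(v)=\min\{d_G(v)-m-s(v)-(k-1),\,s_0(v)+m_0\}-d_{F_0}(v)$ with $s_0(v)=l_0(v)+(k-1)\bar{\chi}_z(v)$, which matches yours exactly), and the same application of Corollary~\ref{cor:modulo-k:partition-connected}. No substantive differences.
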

\begin{proof}
{Decomposed $G$ into 
an $m$-tree-connected factor $F$, an $m_0$-tree-connected factor $F_0$, and 
 a $(2k-2,l_0)$-partition-connected factor $\mathcal{F}$.
For each vertex $v$, define $s'(v)=s(v)+m-d_F(v)$ and 
$s'_0(v)=\min\{ d_G(v)-m-s(v)-(k-1),s_0(v)+m_0\}-d_{F_0}(v)$, where $s_0(v)=l_0(v)+(k-1)\bar{\chi}_z(v)$.
Obviously, $s'(v)+s'_0(v)+k-1\le d_G(v)-d_F(v)-d_{F_0}(v)$.
Since $d_F(v)\ge m$ and $d_{F_0}(v)\ge m_0$, 
we must also have 
$s'(v)\le s(v)\le l_0(v)+(k-1)\bar{\chi}_z(v)$ and 
$s_0'(v)\le s_0(v)= l_0(v)+(k-1)\bar{\chi}_z(v)$.
Thus by Corollary~\ref{cor:modulo-k:partition-connected},
 the graph $G$ has an $f$-factor $H$ including $F$ excluding $F_0$ such that 
for each vertex~$v$,
$m+s(v)= d_F(v)+s'(v)\le d_{H}(v)\le d_G(v)-s'_0(v)-d_{F_0}(v)=
\max\{m+s(v)+k-1,d_G(v)-s_0(v)-m_0\}$.
Hence $H$ is the desired $f$-factor we are looking for.
}\end{proof}
\subsection{Graphs with edge-connectivity at least $2m+2m_0+4k-4$}
The following theorem gives a sufficient edge-connectivity condition for the existence of tree-connected $f$-factors modulo $k$ 
in bipartite graphs with a given bound on degrees.
\begin{thm}\label{thm:modulo:bi:first}
{Let $G$ be a bipartite graph, let $k$ be an integer, $k\ge 2$, and let $f:V(G)\rightarrow Z_k$ be a compatible mapping. 
If $G$ is $(2m+2m_0+4k-4)$-edge-connected, then it has an $m$-tree-connected $f$-factor $H$ such that its complement is
 $m_0$-tree-connected and for each vertex~$v$,
$$\lfloor \frac{d_G(v)}{2}\rfloor -(k-1) -m_0\le d_H(v) \le \lceil \frac{d_G(v)}{2}\rceil+(k-1)+m.$$
Furthermore, for an arbitrary given vertex $z$, $d_H(z)$ can be assigned to any plausible integer value in whose interval.
}\end{thm}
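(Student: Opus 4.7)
\textbf{Proof plan for Theorem~\ref{thm:modulo:bi:first}.}

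The plan is to reduce the edge-connectivity hypothesis to a partition-connectivity hypothesis, then invoke the partition-connectivity version already proved in Theorem~\ref{thm:bipartite:partition-connected}. Since $2m+2m_0+4k-4 = 2(m+m_0+2k-2)$, the graph $G$ is $2(m+m_0+2k-2)$-edge-connected. Applying Theorem~\ref{thm:basic:M} with the parameter $m+m_0+2k-2$ (and singling out the given vertex $z$) yields that $G$ is $(m+m_0+2k-2,\,l_0)$-partition-connected, where
$$
l_0(v)=\lfloor d_G(v)/2\rfloor-(m+m_0+2k-2) \text{ for } v\ne z, \qquad l_0(z)=\lceil d_G(z)/2\rceil.
$$

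Next, I would apply Theorem~\ref{thm:bipartite:partition-connected} with the choice $s(v)=\lfloor d_G(v)/2\rfloor-(k-1)-m_0-m$ for every $v\ne z$, after checking the required inequality $s(v)\le l_0(v)+(k-1)\bar\chi_z(v)$. Indeed, for $v\ne z$ both sides equal $\lfloor d_G(v)/2\rfloor-m-m_0-(k-1)$, so the hypothesis holds with equality; for $v=z$ it holds trivially. The theorem then produces an $m$-tree-connected $f$-factor $H$ whose complement is $m_0$-tree-connected, with $m+s(v)\le d_H(v)\le \max\{m+s(v)+k-1,\,d_G(v)-l_0(v)-(k-1)\bar\chi_z(v)-m_0\}$. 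A direct substitution shows the lower bound is exactly $\lfloor d_G(v)/2\rfloor-(k-1)-m_0$, while the max on the right collapses to $\lceil d_G(v)/2\rceil+(k-1)+m$ (the second term dominates the first). This gives the promised bound on every vertex $v\ne z$.

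For the \emph{furthermore} clause, I would exploit the freedom given by the special vertex $z$, where Theorem~\ref{thm:basic:M} allows the larger value $l_0(z)=\lceil d_G(z)/2\rceil$. One chooses $s(z)$ to be an arbitrary integer in the range $\lfloor d_G(z)/2\rfloor-m_0-m\le s(z)\le \lceil d_G(z)/2\rceil-1$; then Theorem~\ref{thm:bipartite:partition-connected} forces $m+s(z)\le d_H(z)\le m+s(z)+k-1$, an interval of $k$ consecutive integers. Since $d_H(z)$ must satisfy $d_H(z)\stackrel{k}{\equiv}f(z)$, exactly one integer in this window is admissible, and by sliding $s(z)$ through its allowed values one can realize any integer congruent to $f(z)$ modulo $k$ lying in the full degree interval $[\lfloor d_G(z)/2\rfloor-(k-1)-m_0,\,\lceil d_G(z)/2\rceil+(k-1)+m]$.

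The routine part is the arithmetic matching parameters to the desired bounds; the only subtle point, and the one I would check carefully, is that compatibility of $f$ with $G$ is transmitted faithfully into the application of Theorem~\ref{thm:bipartite:partition-connected} (it appears directly as a hypothesis, so no work is needed), and that the window $[m+s(z),\,m+s(z)+k-1]$ for $z$ actually covers every admissible target value as $s(z)$ varies. The main obstacle, conceptually, is ensuring that the choices of $l_0$, $s$, and $s(z)$ simultaneously satisfy the dominance inequalities demanded by Theorem~\ref{thm:bipartite:partition-connected}, but these reduce to elementary estimates of the type already carried out in the proofs of Theorems~\ref{thm:bounded:partition} and~\ref{thm:modulo2:main}.
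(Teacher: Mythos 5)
Your argument is the same as the paper's: apply Theorem~\ref{thm:basic:M} with parameter $m+m_0+2k-2$ to get $(m+m_0+2k-2,l_0)$-partition-connectivity, then feed $s(v)=l_0(v)+k-1$ into Theorem~\ref{thm:bipartite:partition-connected}; your arithmetic for the main degree bounds at $v\ne z$ checks out and matches the paper exactly.

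The one flaw is in the \emph{furthermore} clause: your allowed range $\lfloor d_G(z)/2\rfloor-m_0-m\le s(z)\le \lceil d_G(z)/2\rceil-1$ is too narrow. Sliding $s(z)$ over that range makes the union of the windows $[m+s(z),\,m+s(z)+k-1]$ equal to $[\lfloor d_G(z)/2\rfloor-m_0,\ \lceil d_G(z)/2\rceil+m+k-2]$, which omits the $k-1$ smallest and the single largest integers of the target interval $[\lfloor d_G(z)/2\rfloor-(k-1)-m_0,\ \lceil d_G(z)/2\rceil+(k-1)+m]$; a plausible value $\equiv f(z)\pmod k$ sitting in those fringes cannot be forced by your choices. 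The paper instead lets $s(z)$ range over $\lfloor d_G(z)/2\rfloor-(k-1)-m_0-m\le s(z)\le l_0(z)=\lceil d_G(z)/2\rceil$, which is still legal (the constraint at $z$ is only $s(z)\le l_0(z)$, since $\bar\chi_z(z)=0$, and the lower end is exactly the threshold at which the $\max$ in the upper bound still collapses to $m+s(z)+k-1$), and whose windows cover the entire interval. Widening your range accordingly repairs the argument; everything else stands.
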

\begin{proof}
{By Theorem~\ref{thm:basic:M}, 
the graph $G$ is $(m+m_0+2k-2, l_0)$-partition-connected such that 
for each $v\in V(G)\setminus \{z\}$, 
$l_0(v)=\lfloor d_G(v)/2 \rfloor-m-m_0-(2k-2)$, and also $l_0(z)=\lceil d_G(z)/2 \rceil$.
By applying Theorem~\ref{thm:bipartite:partition-connected},
the graph $G$ has an $m$-tree-connected $f$-factor $H$ such that its complement is $m_0$-tree-connected and for each $v\in V(G)\setminus \{z\}$, 
$\lfloor d_G(v)/2\rfloor -(k-1) -m_0 = 
m+s(v)\le d_{H}(v)\le \max\{m+s(v)+k-1,d_G(v)-l_0(v)-(k-1)-m_0\}=
d_G(v)-l_0(v)-(k-1)-m_0=\lceil d_G(v)/2\rceil+(k-1)+m$, 
where $s(v)=l_0(v)+k-1$,
 and also
$m+s(z)\le d_{H}(z)\le \max\{m+s(z)+k-1,d_G(z)-l_0(z)-m_0\}
=m+s(z)+k-1$, where $s(z)$ is an arbitrary integer 
with $ \lfloor d_G(z)/2\rfloor-(k-1)-m_0-m\le s(z)\le l_0(z)=\lceil d_G(z)/2 \rceil$.
Hence $H$ is the desired $f$-factor we are looking for.
}\end{proof}
\begin{cor}
{Every bipartite $(4k-2)$-edge-connected Eulerian graph $G$
has a connected factor $H$ such that for each vertex~$v$,
$d_H(v)\in \{d_G(v)/2, d_G(v)/2+k\}.$ 
}\end{cor}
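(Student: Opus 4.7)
The plan is to apply Theorem~\ref{thm:modulo:bi:first} with the parameters $m=1$, $m_0=0$, and the mapping $f:V(G)\rightarrow Z_k$ defined by $f(v) \equiv d_G(v)/2 \pmod{k}$. Note that $d_G(v)/2$ is an integer for every vertex $v$ because $G$ is Eulerian. The required edge-connectivity becomes $2\cdot 1 + 2\cdot 0 + 4k-4 = 4k-2$, which matches the hypothesis exactly.

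First I would verify that $f$ is compatible with $G$. Since $G$ is $(4k-2)$-edge-connected and bipartite with some bipartition $(X,Y)$, and $4k-2 \ge 2k-1$, the remark cited from~\cite{ModuloFactorBounded} reduces compatibility to checking the condition with respect to the unique bipartition $(X,Y)$. Because $G$ is bipartite we have $e_G(X)=e_G(Y)=0$, so compatibility reduces to the congruence $\sum_{v\in X} f(v) \stackrel{k}{\equiv} \sum_{v\in Y} f(v)$. This holds trivially since $\sum_{v\in X} d_G(v) = |E(G)| = \sum_{v\in Y} d_G(v)$, so $\sum_{v\in X} d_G(v)/2 = \sum_{v\in Y} d_G(v)/2$.

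Applying Theorem~\ref{thm:modulo:bi:first} then yields a $1$-tree-connected (i.e., connected) $f$-factor $H$ satisfying, for every vertex $v$,
\[
\frac{d_G(v)}{2} - (k-1) \le d_H(v) \le \frac{d_G(v)}{2} + k.
\]
This is an interval of $2k$ consecutive integers. Among them, the only two values congruent to $d_G(v)/2$ modulo $k$ are $d_G(v)/2$ itself and $d_G(v)/2 + k$ (the value $d_G(v)/2 - k$ falls just outside the lower bound). Since $d_H(v) \stackrel{k}{\equiv} f(v) \equiv d_G(v)/2$, we conclude $d_H(v) \in \{d_G(v)/2,\, d_G(v)/2 + k\}$, as required.

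There is no serious obstacle: the corollary is essentially the specialization of Theorem~\ref{thm:modulo:bi:first} to the Eulerian setting where the floor and ceiling coincide. The only thing one has to be careful about is the compatibility verification, but this is immediate from the bipartite structure. The entire argument is a bookkeeping check that the chosen interval contains exactly two residues modulo $k$ equal to the prescribed one.
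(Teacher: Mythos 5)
Your proposal is correct and follows essentially the same route as the paper's own proof: both apply Theorem~\ref{thm:modulo:bi:first} with $m=1$, $m_0=0$ and $f(v)=d_G(v)/2 \pmod{k}$, using $\sum_{v\in X}d_G(v)/2=\sum_{v\in Y}d_G(v)/2$ to establish compatibility. The Eulerian hypothesis makes the floor and ceiling coincide, so the degree window $d_G(v)/2-(k-1)\le d_H(v)\le d_G(v)/2+k$ contains exactly the two admissible residues $d_G(v)/2$ and $d_G(v)/2+k$, just as in the paper.
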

\begin{proof}
{Let $(X,Y)$ be the bipartition of $G$. Since $\sum_{v\in X}d_G(v)/2 =\sum_{v\in Y} d_G(v)/2$, the mapping $f(v)=d_G(v)/2$ (mod $k$) is compatible with $G$. By applying Theorem~\ref{thm:modulo:bi:first} with $m=1$, the graph $G$ has a connected $f$-factor $H$ such that for each vertex $v$, $d_G(v)/2-k< d_H(v)\le d_G(v)/2+k$. Hence the assertion holds.
}\end{proof}
\begin{cor}
{Every bipartite $8k$-edge-connected Eulerian graph $G$ of even order can be decomposed into two
 connected factors $G_1$ and $G_2$ such that for each vertex $v$, $$d_{G_1}(v),d_{G_2}(v)\in \{d_G(v)/2-k, d_G(v)/2+k\}.$$ 
}\end{cor}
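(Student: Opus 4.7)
The plan is to apply Theorem~\ref{thm:modulo:bi:first} with the modulus $k$ replaced by $2k$, with $m=m_0=1$, and with an appropriately chosen mapping $f$, and then to take $G_1=H$ and $G_2=G\setminus E(H)$.

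Concretely, let $(X,Y)$ be the bipartition of $G$, and define $f:V(G)\to Z_{2k}$ by $f(v)\equiv d_G(v)/2-k\pmod{2k}$; this makes sense since $G$ is Eulerian. First I would verify that $f$ is compatible with $G$. Because $G$ is bipartite, $e_G(X)=e_G(Y)=0$, so compatibility with respect to $(X,Y)$ reduces to $\sum_{v\in X}f(v)\equiv\sum_{v\in Y}f(v)\pmod{2k}$. Since $\sum_{v\in X}d_G(v)=\sum_{v\in Y}d_G(v)=|E(G)|$, the difference is $k(|Y|-|X|)$, and $|V(G)|$ even forces $|Y|-|X|$ to be even, so this vanishes mod $2k$. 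As $G$ is $8k$-edge-connected, hence certainly $(2(2k)-1)$-edge-connected, the earlier remark of the paper upgrades this to full compatibility.

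Next I would invoke Theorem~\ref{thm:modulo:bi:first} with modulus $2k$, $m=m_0=1$, since the hypothesis $2m+2m_0+4(2k)-4=8k$ matches the edge-connectivity exactly. This yields a $1$-tree-connected (hence connected) $f$-factor $H$ whose complement is also $1$-tree-connected (hence connected), with
\[
\lfloor d_G(v)/2\rfloor-(2k-1)-1\le d_H(v)\le\lceil d_G(v)/2\rceil+(2k-1)+1.
\]
As $d_G(v)$ is even, these bounds collapse to $d_G(v)/2-2k\le d_H(v)\le d_G(v)/2+2k$, and the only integers in that interval satisfying $d_H(v)\equiv d_G(v)/2-k\pmod{2k}$ are $d_G(v)/2-k$ and $d_G(v)/2+k$.

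Finally, setting $G_1=H$ and $G_2=G\setminus E(H)$, both factors are connected, and since $d_{G_2}(v)=d_G(v)-d_{G_1}(v)$, each of $d_{G_1}(v),d_{G_2}(v)$ lies in $\{d_G(v)/2-k,\,d_G(v)/2+k\}$. The only delicate point is the compatibility check and the correct choice of residue in $Z_{2k}$; once $f\equiv d_G/2-k\pmod{2k}$ is justified, the degree window of Theorem~\ref{thm:modulo:bi:first} is tight enough to pin $d_H$ to exactly the two desired values, so no further arithmetic is needed.
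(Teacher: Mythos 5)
Your proof is correct and follows essentially the same route as the paper: both apply Theorem~\ref{thm:modulo:bi:first} with modulus $2k$ and $m=m_0=1$ to the mapping $f\equiv d_G/2-k\equiv d_G/2+k\pmod{2k}$ (the paper writes the latter representative), using evenness of $|V(G)|$ for compatibility. Your write-up merely spells out the compatibility check and the residue arithmetic that the paper leaves implicit.
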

\begin{proof}
{Let $(X,Y)$ be the bipartition of $G$. Since $G$ has even order and $\sum_{v\in X}d_G(v)/2 =\sum_{v\in Y} d_G(v)/2$, the mapping $f(v)=d_G(v)/2+k$ (mod $2k$) must be compatible with $G$. By applying Theorem~\ref{thm:modulo:bi:first} with $m=m_0=1$, the graph $G$ has a connected $f$-factor $H$ such that its complement is connected and for each vertex $v$, $d_G(v)/2-2k\le d_H(v)\le d_G(v)/2+2k$. This can complete the proof.
}\end{proof}
\subsection{A restriction on degrees of one partite set}
Our aim in this subsection is to improve the needed tree-connectivity of Lemma 4.3 in \cite{Merker-2017}.
Our proof is based on the following lemma which is an improved version of Lemma 4.1 in \cite{Merker-2017}.
\begin{lem}{\rm (\cite{ModuloFactorBounded})}\label{lem:modulo:1/2}
{Let $G$ be a bipartite multigraph with bipartition $(X,Y)$ in which each vertex in $X$ has even degree.
Let $f$ be an integer-valued function on $Y$ satisfying $\sum_{v\in Y}f(v)\stackrel{k}{\equiv}\frac{1}{2}|E(G)|$.
 If $G$ is $(3k-3)$-edge-connected, then
there exists a factor $H$ of $G$ such that 
\begin{enumerate}{
\item For each $v\in X$, $d_{H}(v)=\frac{1}{2}d_G(v)$.
\item  For each $v\in Y$, $d_H(v)\stackrel{k}{\equiv} f(v)$ and $|d_H(v)- \frac{1}{2}d_G(v)|<k$.
}\end{enumerate}
}\end{lem}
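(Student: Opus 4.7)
My plan is to deduce this lemma as an essentially immediate corollary of Theorem~\ref{Inro:factor:Zk}, by extending $f$ from $Y$ to all of $V(G)$ in the only sensible way. Since each $x\in X$ has $d_G(x)$ even, I would define $f':V(G)\rightarrow Z_k$ by setting $f'(y)=f(y)$ for $y\in Y$ and $f'(x)\stackrel{k}{\equiv}\frac{1}{2}d_G(x)$ for $x\in X$, then apply Theorem~\ref{Inro:factor:Zk} to the pair $(G,f')$ and read off both conclusions of the lemma directly from its degree bounds.

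First I would verify that $f'$ is compatible with $G$. Since $G$ is bipartite and $(3k-3)$-edge-connected, hence $(2k-1)$-edge-connected, the fact recalled in the introduction reduces compatibility to the single congruence $\sum_{x\in X}f'(x)\stackrel{k}{\equiv}\sum_{y\in Y}f'(y)$ corresponding to the unique bipartition $(X,Y)$. The left-hand side satisfies $\sum_{x\in X}f'(x)\stackrel{k}{\equiv}\sum_{x\in X}\frac{1}{2}d_G(x)=\frac{1}{2}|E(G)|$, and the right-hand side equals $\sum_{y\in Y}f(y)\stackrel{k}{\equiv}\frac{1}{2}|E(G)|$ by hypothesis, so compatibility is immediate.

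Next, Theorem~\ref{Inro:factor:Zk} yields an $f'$-factor $H$ of $G$ satisfying $\lfloor d_G(v)/2\rfloor-(k-1)\le d_H(v)\le \lceil d_G(v)/2\rceil+(k-1)$ for every vertex $v$. For $y\in Y$ this gives $d_H(y)\stackrel{k}{\equiv} f(y)$ together with $|d_H(y)-\frac{1}{2}d_G(y)|<k$, which is exactly condition~(2). For $x\in X$, the evenness of $d_G(x)$ collapses the floor and ceiling, so $d_H(x)$ lies in the interval $[\frac{1}{2}d_G(x)-(k-1),\frac{1}{2}d_G(x)+(k-1)]$; combined with $d_H(x)\stackrel{k}{\equiv}\frac{1}{2}d_G(x)$, the only integer in that interval is $\frac{1}{2}d_G(x)$ itself, so $d_H(x)=\frac{1}{2}d_G(x)$, delivering condition~(1).

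I do not expect any real obstacle here. The entire argument amounts to the observation that once $f'(x)$ is pinned to the residue class of $\frac{1}{2}d_G(x)$ modulo $k$, the deviation bound strictly less than $k$ furnished by Theorem~\ref{Inro:factor:Zk} automatically forces exact equality at every $x\in X$; and the hypothesis on $\sum_{y\in Y}f(y)$ is tuned precisely so that the extension $f'$ is compatible. The only mild point to be careful about is to invoke the reduction of compatibility, in the bipartite $(2k-1)$-edge-connected setting, to a single congruence across the bipartition.
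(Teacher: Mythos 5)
Your argument is correct as far as it goes, but note that the paper itself offers no proof of Lemma~\ref{lem:modulo:1/2}: it is imported verbatim from \cite{ModuloFactorBounded}, so there is no in-paper argument to compare against. What you have written is a clean derivation of the lemma from Theorem~\ref{Inro:factor:Zk}: the extension $f'(x)\stackrel{k}{\equiv}\frac{1}{2}d_G(x)$ is well defined because each $x\in X$ has even degree, the compatibility check correctly reduces to the single congruence across the unique bipartition (legitimate since $3k-3\ge 2k-1$ for $k\ge 2$, and the graph is connected so the bipartition is unique; the case $k=1$ is trivial), and the pinning argument at vertices of $X$ --- that the only integer congruent to $\frac{1}{2}d_G(x)$ modulo $k$ within distance $k-1$ of it is $\frac{1}{2}d_G(x)$ itself --- is exactly right, as is the verification that the floor/ceiling bounds give $|d_H(y)-\frac{1}{2}d_G(y)|\le k-\frac{1}{2}<k$ at odd-degree vertices of $Y$. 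The one caveat you should acknowledge is logical dependency: both Theorem~\ref{Inro:factor:Zk} and Lemma~\ref{lem:modulo:1/2} are cited from the same source, and results of this ``half-degree on one side'' type (going back to Thomassen's work and Lemma 4.1 of \cite{Merker-2017}) are typically the engine \emph{behind} theorems like Theorem~\ref{Inro:factor:Zk} rather than consequences of them. So your reduction is a valid and pleasantly short way to see that the lemma follows from the theorem within this paper's framework, but it should not be presented as an independent proof of the lemma, since it may well be circular relative to how \cite{ModuloFactorBounded} actually establishes these two statements.
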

Before stating the main result, let us formulate the following development of Lemma~\ref{lem:modulo:1/2}.
\begin{thm}\label{thm:modulo:1/2:F-F0}
{Let $G$ be a bipartite multigraph with bipartition $(X,Y)$ and let  $f$ be an integer-valued function on $V(G)$ satisfying $\sum_{v\in Y}f(v)\stackrel{k}{\equiv}\sum_{v\in X}f(v)$.
Let $F$, $F_0$, and $T$ be three edge-disjoint factors of $G$ such that for each $v\in X$, 
 $$d_F(v)+d_T(v)/2\le f(v)\le d_G(v)-d_{F_0}(v)-d_T(v)/2.$$ 
 If $T$ is $(3k-3)$-edge-connected, then
there exists a factor $H$ of $G$ including $F$ excluding $F_0$ such that 
\begin{enumerate}{
\item  For each $v\in X$, $d_{H}(v)=f(v)$.
\item  For each $v\in Y$, $d_H(v)\stackrel{k}{\equiv} f(v)$, and
$\frac{1}{2}d_T(v)-k<\min \{d_H(v)-d_F(v),d_{H_0}(v)-d_{F_0}(v)\}$, where $H_0=G\setminus E(H)$.
}\end{enumerate}
}\end{thm}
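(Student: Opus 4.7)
The plan is to reduce to Lemma~\ref{lem:modulo:1/2} applied to $T$, using the residual subgraph $R:=G\setminus E(F\cup F_0\cup T)$ to absorb all the $X$-side flexibility that the lemma itself does not provide. The candidate factor will be $H=F\cup H_T\cup H_R$ with $H_T\subseteq T$ and $H_R\subseteq R$.

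First I would construct $H_R$. For every $v\in X$ the hypothesis rewrites as $0\le f(v)-d_F(v)-d_T(v)/2\le d_R(v)$, so $a(v):=f(v)-d_F(v)-d_T(v)/2$ is a nonnegative integer lying in $[0,d_R(v)]$; in particular $d_T(v)$ is even on $X$, as Lemma~\ref{lem:modulo:1/2} will require. Since $R$ is bipartite with parts $(X,Y)$, edges of $R$ at distinct vertices of $X$ are pairwise disjoint, and I can independently pick an arbitrary set of $a(v)$ edges of $R$ incident to each $v\in X$; let $H_R$ be the resulting subgraph.

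Next I would apply Lemma~\ref{lem:modulo:1/2} to $T$ with the function $g:Y\rightarrow\mathbb{Z}$ defined by $g(v)=f(v)-d_F(v)-d_{H_R}(v)$. The edge-connectivity hypothesis $(3k-3)$ on $T$ is exactly what the lemma demands. The required congruence $\sum_{v\in Y}g(v)\stackrel{k}{\equiv}|E(T)|/2$ drops out from the bipartite identities $\sum_{v\in Y}d_F(v)=|E(F)|=\sum_{v\in X}d_F(v)$ and $\sum_{v\in Y}d_{H_R}(v)=\sum_{v\in X}a(v)=\sum_{v\in X}f(v)-|E(F)|-|E(T)|/2$ combined with the compatibility $\sum_{v\in Y}f(v)\stackrel{k}{\equiv}\sum_{v\in X}f(v)$. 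The lemma then outputs a factor $H_T$ of $T$ with $d_{H_T}(v)=d_T(v)/2$ on $X$, and with $d_{H_T}(v)\stackrel{k}{\equiv}g(v)$ and $|d_{H_T}(v)-d_T(v)/2|<k$ on $Y$.

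Finally, setting $H=F\cup H_T\cup H_R$, I would verify the conclusion. The inclusions $F\subseteq H\subseteq G\setminus E(F_0)$ hold by construction. For $v\in X$ one computes $d_H(v)=d_F(v)+d_T(v)/2+a(v)=f(v)$. For $v\in Y$ one has $d_H(v)-d_F(v)=d_{H_T}(v)+d_{H_R}(v)\stackrel{k}{\equiv} g(v)+d_{H_R}(v)=f(v)-d_F(v)$, and the two strict inequalities of the form $\tfrac{1}{2}d_T(v)-k<\,\cdot\,$ follow from $d_{H_T}(v)>d_T(v)/2-k$ and $d_T(v)-d_{H_T}(v)>d_T(v)/2-k$ combined with the nonnegativity of $d_{H_R}(v)$ and of $d_R(v)-d_{H_R}(v)$. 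I do not expect any genuine obstacle; the only step requiring attention is the mod-$k$ bookkeeping behind the sum condition on $g$, where the bipartite compatibility $\sum_{Y}f\stackrel{k}{\equiv}\sum_{X}f$ is essential.
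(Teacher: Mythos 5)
Your overall architecture is the same as the paper's (reduce to Lemma~\ref{lem:modulo:1/2} over the $(3k-3)$-edge-connected part, using the leftover edges to hit the exact value $f(v)$ on $X$), but there is one genuine gap: you assert that $a(v)=f(v)-d_F(v)-d_T(v)/2$ is an integer, ``in particular $d_T(v)$ is even on $X$.'' Nothing in the hypotheses forces this. The inequality $d_F(v)+d_T(v)/2\le f(v)\le d_G(v)-d_{F_0}(v)-d_T(v)/2$ is perfectly consistent with $d_T(v)$ odd (e.g.\ $d_T(v)=3$, $d_F(v)=d_{F_0}(v)=0$, $f(v)=2$, $d_G(v)=4$), and the theorem is applied later in the paper (Corollary~\ref{thm:m,m0,k,lamda, lambda0}) to a $T$ with no parity control on $X$. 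Without evenness your construction breaks in two places: you cannot select ``$a(v)$ edges'' of $R$ at $v$, and Lemma~\ref{lem:modulo:1/2} cannot be applied to $T$ at all, since its hypothesis is precisely that every vertex of $X$ has even degree and its conclusion prescribes $d_{H_T}(v)=d_T(v)/2$ there.

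The paper's proof is designed exactly to sidestep this: instead of applying the lemma to $T$ and layering the residual edges on top as a separate factor $H_R$, it first (harmlessly) enlarges $F$ so that $d_F(v)=\min\{\lfloor f(v)-d_T(v)/2\rfloor,\,d_{G_0}(v)-d_T(v)\}$ on $X$, and then builds a factor $G'$ of $G\setminus E(F\cup F_0)$ \emph{containing} $T$ with $d_{G'}(v)=2f(v)-2d_F(v)$ for $v\in X$ --- an even number by fiat --- and applies the lemma to $G'$ (still $(3k-3)$-edge-connected since $T\subseteq G'$). In other words, the residual edges must be folded into the host graph of the lemma, in a quantity that rounds the $X$-degrees to the right even values, rather than treated as an independent layer. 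If you make that change, the rest of your argument goes through: your mod-$k$ bookkeeping for the sum condition on $Y$ and your derivation of the two strict inequalities $\tfrac12 d_T(v)-k<d_H(v)-d_F(v)$ and $\tfrac12 d_T(v)-k<d_{H_0}(v)-d_{F_0}(v)$ are correct and essentially identical to the paper's.
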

\begin{proof}
{For convenience, let us define $G_0=G\setminus E(F_0)$.
Since for each $v\in X$, $d_F(v)\le f(v)-d_T(v)/2$, without loss of generality,
we can assume that 
$d_F(v)= \min\{\lfloor f(v)-d_T(v)/2\rfloor, d_{G_0}(v)-d_T(v)\}$ by adding some of the edges of $E(G_0)\setminus E(T\cup F)$ to $F$ (if necessary). 
Since $f(v)\le d_{G_0}(v)-d_T(v)/2$, we must have 
$2f(v)-d_{G_0}(v)\le \min \{f(v)-d_T(v)/2, d_{G_0}(v)-d_T(v)\}$
which implies that $2f(v)-d_{G_0}(v) \le d_F(v)$.
Therefore, for each $v\in X$, $d_T(v)\le 2f(v)-2d_F(v) \le d_{G_0}(v)-d_F(v)$.
This can imply that there exists a factor $G'$ of $G_0\setminus E(F)$ including $T$
such that for $v\in X$, $d_{G'}(v)= 2f(v)-2d_F(v)$.
 For each vertex $v$, let $f'(v)=f(v)-d_F(v)$.
According to the assumption, we must have 
$\sum_{v\in Y}f'(v)= \sum_{v\in Y}(f(v)-d_F(v))\stackrel{k}{\equiv}\sum_{v\in X}(f(v)-d_F(v))=\frac{1}{2}|E(G')|$. 
Since $G'$ is $(3k-3)$-edge-connected, by Lemma~\ref{lem:modulo:1/2}, there exists a factor $H'$ of $G'$ such that 
for each $v\in X$, $d_{H'}(v)=\frac{1}{2}d_{G'}(v)=f'(v)$, and 
 for each $v\in Y$, $d_{H'}(v)\stackrel{k}{\equiv} f'(v)$ and $|d_{H'}(v)- \frac{1}{2}d_{G'}(v)|<k$.
It is not hard to check that $ H' \cup F$ is the desired factor we are looking for.
}\end{proof}
\begin{lem}{\rm (\cite{ClosedTrails})}\label{lem:small-degree}
{Let $G$ be a graph with an independent set $X\subseteq V(G)$. 
If $G$ is $k$-tree-connected and $k\ge m$, then it has an $m$-tree-connected factor $H$ 
such that for each~$v\in X$, $d_H(v)\le \lceil \frac{m}{k} d_G(v)\rceil$.
}\end{lem}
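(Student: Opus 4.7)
My plan is to reduce the statement to an $(m,l)$-partition-connectivity condition and then invoke Lemma~\ref{lem:M}. Define a nonnegative integer-valued function on $V(G)$ by
$$l(v) = \begin{cases} d_G(v) - \lceil \tfrac{m}{k} d_G(v) \rceil, & v \in X, \\ 0, & v \in V(G) \setminus X. \end{cases}$$
If $G$ is $(m,l)$-partition-connected, then Lemma~\ref{lem:M} (applied with $M = \emptyset$) produces an $m$-tree-connected factor $H$ whose complement is $(0,l)$-partition-connected; orienting that complement to witness the $(0,l)$-partition-connectivity yields $d_{G \setminus E(H)}(v) \ge l(v)$ for every vertex, and therefore $d_H(v) \le d_G(v) - l(v) = \lceil \tfrac{m}{k} d_G(v) \rceil$ for every $v \in X$, which is exactly the conclusion we want.

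The main step is then to verify, via Lemma~\ref{lem:criterion}, that for every partition $P$ of $V(G)$,
$$e_G(P) \;\ge\; m(|P|-1) + \sum_{\{v\}\in P} l(v).$$
Let $X' = \{v \in X : \{v\} \in P\}$. Two lower bounds on $e_G(P)$ are available: first, the $k$-tree-connectivity of $G$ gives $e_G(P) \ge k(|P|-1)$; second, because $X$ is independent, every edge incident to $X'$ has its other end outside $X'$ and so crosses parts of $P$, with no double-counting, so $e_G(P) \ge \sum_{v \in X'} d_G(v)$. Since any convex combination of two simultaneously valid lower bounds remains a lower bound, weighting them by $\tfrac{m}{k}$ and $\tfrac{k-m}{k}$ respectively produces
$$e_G(P) \;\ge\; m(|P|-1) + \tfrac{k-m}{k} \sum_{v \in X'} d_G(v).$$
Because $\lceil \tfrac{m}{k} d_G(v) \rceil \ge \tfrac{m}{k} d_G(v)$ implies $l(v) \le \tfrac{k-m}{k} d_G(v)$ on $X$, this dominates the required right-hand side, completing the verification.

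The conceptual core of the argument is the convex-combination trick with weights $m/k$ and $(k-m)/k$: neither of the two elementary lower bounds on $e_G(P)$ is individually strong enough to yield the ceiling $\lceil \tfrac{m}{k} d_G(v) \rceil$, but together at these specific weights they line up exactly. The hypothesis that $X$ is independent is used precisely to guarantee that the $\sum_{v \in X'} d_G(v)$ edges at $X'$ are distinct and all cross parts of $P$, while the hypothesis $k \ge m$ ensures the weights lie in $[0,1]$. I expect no serious obstacle beyond isolating this weighting; once it is in hand, the rest follows mechanically from Lemma~\ref{lem:M} and Lemma~\ref{lem:criterion}.
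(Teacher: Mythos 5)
Your argument is correct. The paper does not actually prove this lemma --- it is imported from \cite{ClosedTrails} --- so there is no internal proof to measure against, but your derivation is sound and self-contained within the paper's own toolkit. The two lower bounds $e_G(P)\ge k(|P|-1)$ and $e_G(P)\ge\sum_{v\in X'}d_G(v)$ are both valid (the second because independence of $X$ rules out loops at vertices of $X'$, so every edge at such a singleton part crosses $P$, and rules out edges with both ends in $X'$, so there is no double counting); their convex combination with weights $m/k$ and $(k-m)/k$ dominates $m(|P|-1)+\sum_{\{v\}\in P}l(v)$ because $l(v)=d_G(v)-\lceil\frac{m}{k}d_G(v)\rceil\le\frac{k-m}{k}d_G(v)$ on $X$ and $l=0$ elsewhere; and Lemma~\ref{lem:criterion} followed by Lemma~\ref{lem:M} (with $M=\emptyset$) then yields the factor, whose complement has degree at least $l(v)$ at each vertex, giving the stated ceiling bound. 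This is precisely the style of argument the paper uses in proving Theorem~\ref{thm:basic:MM0}, so your proof is fully in keeping with the source's methodology.
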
 
The following result
 can improve the needed tree-connectivity of Lemma 4.3 and Propositions 4.2 and 5.2 in~\cite{Merker-2017} simultaneously.
In particular, it reduces the needed tree-connectivity of Lemma 4.3 in \cite{Merker-2017}, with the special case $m_0=1$,
by replacing the  linear upper bound of $2\lambda m+3km$ instead of the former quadratic bound of $8\lambda m^2+12km$.
\begin{cor}\label{thm:m,m0,k,lamda, lambda0}
{Let $\lambda$ and $\lambda_0$ be two nonengative integers and let $k$, $m$, and $m_0$ be three positive integers with $m\ge 2m_0>0$. 
Let $G$ be a bipartite graph with bipartition $(X,Y)$ in which each vertex in $X$ has degree divisible by $m$ and 
and let $f$ be an integer-valued function on $Y$ satisfying $\sum_{v\in Y}f(v)\stackrel{k}{\equiv}\frac{m_0}{m}|E(G)|$. If $G$ has tree-connectivity at least  $$\lceil m(\frac{\lambda}{m_0}+\frac{\lambda_0}{m-m_0}+\frac{3k}{2m_0})\rceil,$$ then
there exists a $\lambda$-tree-connected factor $H$ of $G$ such that the complement of it is $\lambda_0$-tree-connected, and
\begin{enumerate}{
\item For each $v\in X$, $d_{H}(v)=\frac{m_0}{m}d_G(v)$.
\item  For each $v\in Y$, $d_H(v)\stackrel{k}{\equiv} f(v)$.
}\end{enumerate}
}\end{cor}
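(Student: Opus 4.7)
The plan is to reduce to Theorem~\ref{thm:modulo:1/2:F-F0}. First extend $f$ to all of $V(G)$ by setting $f(v):=(m_0/m)\,d_G(v)$ for each $v\in X$; this is an integer since $m\mid d_G(v)$. The hypothesis $\sum_{v\in Y}f(v)\stackrel{k}{\equiv}(m_0/m)|E(G)|$ then becomes $\sum_{v\in X}f(v)\stackrel{k}{\equiv}\sum_{v\in Y}f(v)$, matching the compatibility hypothesis of Theorem~\ref{thm:modulo:1/2:F-F0}.

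Next I would construct three edge-disjoint factors $F$, $F_0$, $T$ of $G$ such that $F$ is $\lambda$-tree-connected, $F_0$ is $\lambda_0$-tree-connected, $T$ is $(3k-3)$-tree-connected (and therefore $(3k-3)$-edge-connected), and for every $v\in X$,
\[
d_F(v)+d_T(v)/2\le f(v)\quad\text{and}\quad d_{F_0}(v)+d_T(v)/2\le d_G(v)-f(v).
\]
Write $K:=\lceil m(\lambda/m_0+\lambda_0/(m-m_0)+3k/(2m_0))\rceil$, so that $G$ is $K$-tree-connected. Applying Lemma~\ref{lem:small-degree} iteratively on the independent set $X$ (first peeling off $F$, then $T$, then $F_0$ from the successive residuals), each extracted factor's degree at $v\in X$ is controlled by its proportional share of $d_G(v)$; summing these shares, the two target inequalities reduce to $\lambda/K+(3k-3)/(2K)\le m_0/m$ and $\lambda_0/K+(3k-3)/(2K)\le (m-m_0)/m$. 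The first follows from $K\ge m\lambda/m_0+3mk/(2m_0)$, and the second from $K\ge m\lambda_0/(m-m_0)+3mk/(2m_0)$ combined with $m\ge 2m_0$ (which yields $3mk/(2m_0)\ge 3m(k-1)/(2(m-m_0))$).

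Given $F$, $F_0$, $T$, Theorem~\ref{thm:modulo:1/2:F-F0} produces a factor $H$ with $F\subseteq H\subseteq G\setminus E(F_0)$ satisfying $d_H(v)=f(v)=(m_0/m)d_G(v)$ for $v\in X$ and $d_H(v)\stackrel{k}{\equiv}f(v)$ for $v\in Y$. Since $F\subseteq H$ is $\lambda$-tree-connected, $H$ is $\lambda$-tree-connected; since $F_0\subseteq G\setminus E(H)$ is $\lambda_0$-tree-connected, the complement of $H$ is $\lambda_0$-tree-connected, completing the proof.

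The main obstacle is the iterative decomposition itself: Lemma~\ref{lem:small-degree} as stated produces only one factor at a time, so one must verify that the residual graph retains enough tree-connectivity at each step and that the cumulative ceiling errors fit within the calibrated bound on $K$. The coefficient $3k/(2m_0)$ in the hypothesis is exactly what is needed because $T$ contributes only half of its degree at each $v\in X$ to each of the two bipartite budgets $f(v)$ and $d_G(v)-f(v)$, while the full $\lambda$ (respectively $\lambda_0$) is charged to the $F$-side (respectively $F_0$-side) and must be inflated by the reciprocal of the available bipartite fraction $m_0/m$ (respectively $(m-m_0)/m$).
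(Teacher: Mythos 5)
Your overall architecture is the same as the paper's: extend $f$ to $X$ by $f(v)=\tfrac{m_0}{m}d_G(v)$, build three edge-disjoint factors $F$, $F_0$, $T$ with the stated tree-connectivities and degree budgets on $X$, and feed them to Theorem~\ref{thm:modulo:1/2:F-F0}, letting $F\subseteq H$ and $F_0\subseteq G\setminus E(H)$ carry the $\lambda$- and $\lambda_0$-tree-connectivity. The gap is exactly the step you flag and then leave open: the iterative peeling via Lemma~\ref{lem:small-degree} does not work as described, because that lemma gives no control whatsoever on the tree-connectivity of the complement of the extracted factor. This is not a bookkeeping issue: removing an arbitrarily chosen spanning tree from a $2$-tree-connected graph can disconnect the remainder (remove a star centered at a vertex of $K_4$), so after the first application of the lemma the residual may have far less tree-connectivity than $K-\lambda$, and the second and third applications have nothing to stand on. Moreover, your arithmetic charges each factor a proportional share of $d_G(v)$, whereas an iterative scheme would only give shares of the (unknown) residual degrees relative to the (unknown) residual tree-connectivity.

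The paper's fix is to reverse the order: first partition the $K$ edge-disjoint spanning trees to decompose $G$ into three edge-disjoint factors $G_1$, $G_2$, $G_3$ that are respectively $\lceil m\lambda/m_0\rceil$-, $\lceil m\lambda_0/(m-m_0)\rceil$-, and $\lceil (3k-2)m/(2m_0)\rceil$-tree-connected, and only then apply Lemma~\ref{lem:small-degree} inside each $G_i$ separately, yielding $d_F(v)\le\lceil\tfrac{m_0}{m}d_{G_1}(v)\rceil$, $d_{F_0}(v)\le\lceil\tfrac{m-m_0}{m}d_{G_2}(v)\rceil$, $d_{T'}(v)\le\lceil\tfrac{2m_0}{m}d_{G_3}(v)\rceil$; since $d_{G_1}(v)+d_{G_2}(v)+d_{G_3}(v)=d_G(v)$ and $\tfrac{m_0}{m}d_G(v)$ is an integer, the budgets close. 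Note also one rounding subtlety you gloss over: the term $d_T(v)/2$ can contribute a half-integer excess that the integrality of $f(v)$ does not absorb, which is why the paper extracts a $(3k-2)$-tree-connected $T'$ and then passes to a $(3k-3)$-tree-connected subfactor $T$ with $d_T(v)\le d_{T'}(v)-1\le\tfrac{2m_0}{m}d_{G_3}(v)$, eliminating the ceiling on the $T$-side. With those two repairs your argument coincides with the paper's proof.
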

\begin{proof}
{Decompose $G$ into three factors $G_1$, $G_2$, and $G_3$ such that 
$G_1$ is $\lceil \frac{m\lambda}{m_0}\rceil$-tree-connected, and $G_2$
is $\lceil \frac{\lambda_0}{m-m_0}\rceil$-tree-connected, and $G_3$ is
$\lceil \frac{(3k-2)m}{2m_0}\rceil$-tree-connected.
According to Lemma~\ref{lem:small-degree}, 
the graph $G_1$ has a $\lambda$-tree-connected factor $F$, 
the graph $G_2$ has a $\lambda_0$-tree-connected factor $F_0$, and
the graph $G_3$ has a $(3k-2)$-tree-connected factor $T'$ 
such that for each $v\in X$, 
 $d_{F}(v)\le \lceil \frac{m_0}{m} d_{G_1}(v)\rceil$, 
$d_{F_0}(v)\le \lceil \frac{m-m_0}{m} d_{G_2}(v)\rceil$, and 
$d_{T'}(v)\le \lceil \frac{2m_0}{m} d_{G_3}(v)\rceil$.
Let $T$ be a $(3k-3)$-tree-connected factor of $T$ such that for each $v\in X$,
 $d_T(v)\le d_{T'}(v)-1\le \frac{2m_0}{m} d_{G_3}(v)$.
For each $v\in X$, define $f(v)=\frac{m_0}{m}d_G(v)$.
Then $d_F(v)+d_T(v)/2\le \lceil \frac{m_0}{m} (d_{G_1}(v)+d_{G_2}(v))\rceil \le \frac{m_0}{m} d_G(v)=f(v)$.
Since $m-m_0\ge m_0$, we must similarly have $f(v)\le d_G(v)-d_{F_0}(v)-d_T(v)/2$.
According to the assumption, $\sum_{v\in Y}f(v)\stackrel{k}{\equiv}\frac{m_0}{m}|E(G)|=\sum_{v\in X}f(v)$.
Therefore, by Theorem~\ref{thm:modulo:1/2:F-F0}, the graph $G$ has a factor $H$ including $F$ excluding $F_0$ such that 
for each $v\in X$, $d_{H}(v)=f(v)$, and for each $v\in Y$, $d_H(v)\stackrel{k}{\equiv} f(v)$.
This can complete the proof.
}\end{proof}
\section{Factors modulo $k$: general graphs}
\label{sec:general-graphs}
\subsection{Preliminary results}
In this section, we are going to develop Theorem~\ref{thm:modulo:bi:first} to general graphs.
For this purpose, we need to make some basic results for working with general graphs using bipartite factors.
\begin{lem}{\rm (\cite{ModuloFactorBounded})}
\label{lem:tree-connected:bipartite}\label{lem:bipartite:factor}
{Every $2m$-tree-connected graph has an $m$-tree-connected bipartite factor.
}\end{lem}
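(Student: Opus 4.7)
The plan is to take a bipartition $(A,B)$ of $V(G)$ that maximizes $|E(G[A,B])|$, i.e., a maximum cut, and to show via the Nash-Williams/Tutte tree-packing criterion that the bipartite factor $H=G[A,B]$ is $m$-tree-connected.

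Fix an arbitrary partition $P=(V_1,\ldots,V_p)$ of $V(G)$ and write $A_i=A\cap V_i$ and $B_i=B\cap V_i$. Classify every $P$-crossing edge of $G$ according to the colors of its endpoints under $(A,B)$: let $\alpha$, $\beta$, and $\gamma$ denote, respectively, the numbers of $P$-crossing edges with both ends in $A$, both ends in $B$, and one end in each. Then $e_G(P)=\alpha+\beta+\gamma$ while $e_H(P)=\gamma$, so the goal reduces to showing $\gamma\ge m(p-1)$.

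The key step is the following localized maximum-cut inequality. Since $(A,B)$ is a maximum cut, swapping $A_i$ and $B_i$, i.e., flipping the entire part $V_i$, cannot increase the size of the cut. A short case analysis on the edges incident to $V_i$ shows that the edges gained in the cut are exactly those counted by $e_G(A_i,A\setminus A_i)+e_G(B_i,B\setminus B_i)$, while the edges lost are exactly those counted by $e_G(A_i,B\setminus B_i)+e_G(B_i,A\setminus A_i)$; thus
\[
e_G(A_i,A\setminus A_i)+e_G(B_i,B\setminus B_i)\le e_G(A_i,B\setminus B_i)+e_G(B_i,A\setminus A_i).
\]
Summing this inequality over $i=1,\ldots,p$, each monochromatic $P$-crossing edge is counted twice on the left and each bichromatic $P$-crossing edge is counted twice on the right, yielding $2(\alpha+\beta)\le 2\gamma$, and hence $e_G(P)=\alpha+\beta+\gamma\le 2\gamma$.

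Since $G$ is $2m$-tree-connected, the Nash-Williams/Tutte theorem gives $e_G(P)\ge 2m(p-1)$, so $\gamma\ge m(p-1)$. As $P$ was arbitrary, a second application of the Nash-Williams/Tutte criterion shows that $H=G[A,B]$ is $m$-tree-connected. I expect the main obstacle to be the careful bookkeeping in the localized maximum-cut inequality, where one must verify that every $P$-crossing edge is counted exactly twice, and on the correct side, when summing over $i$; everything else is essentially routine.
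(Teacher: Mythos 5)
Your proof is correct: the max-cut choice of $(A,B)$, the local flip inequality for each part $V_i$, and the double-counting that yields $e_G(P)\le 2\,e_{G[A,B]}(P)$ all check out, and combining this with the Tutte--Nash-Williams criterion gives exactly the claimed $m$-tree-connected bipartite factor (loops and multiple edges cause no trouble, since loops never cross a part and never enter the cut). The paper itself states this lemma with a citation to \cite{ModuloFactorBounded} and gives no proof, but your argument is the standard one for this fact, so there is nothing further to compare.
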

The following theorem is useful for finding tree-connected factors with given bipartite index.
\begin{thm}\label{thm:decomposition:bipartite-index}
{If $G$ is an $(m_1+2m_2)$-tree-connected graph and $m_2\ge k_0\ge 0$, then $G$ can be decomposed into two factors $G_1$ and $G_2$ such that $G_1$ is $m_1$-tree-connected, $G_2[X,Y]$ is $m_2$-tree-connected for a bipartition $X,Y$ of $V(G)$, and
$$e_{G_2}(X)+e_{G_2}(Y)= \min\{k_0, bi(G)\}.$$
}\end{thm}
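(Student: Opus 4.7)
The plan is to fix a suitable bipartition via Lemma~\ref{lem:bipartite:factor}, designate exactly $\min\{k_0,bi(G)\}$ within-part edges for $G_2$, verify a tree-connectivity inequality in the spirit of Nash--Williams and Tutte, and finally invoke a matroid-union argument to obtain the decomposition.

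Since $G$ is $(m_1+2m_2)$-tree-connected and therefore $2m_2$-tree-connected, Lemma~\ref{lem:bipartite:factor} supplies a bipartition $(X,Y)$ of $V(G)$ together with an $m_2$-tree-connected bipartite factor of $G$. Because this factor lies inside $G[X,Y]$, the subgraph $G[X,Y]$ is itself $m_2$-tree-connected. Let $I$ be the set of edges of $G$ with both endpoints in $X$ or both in $Y$, and put $s=\min\{k_0,bi(G)\}$. Since $|I|=e_G(X)+e_G(Y)\ge bi(G)\ge s$, I would pick any $S\subseteq I$ with $|S|=s$. Using $s\le k_0\le m_2$, for every partition $P$ of $V(G)$ with $|P|\ge 2$,
$$e_{G\setminus S}(P)=e_G(P)-e_S(P)\ge (m_1+2m_2)(|P|-1)-m_2\ge (m_1+m_2)(|P|-1),$$
so by Nash--Williams and Tutte, $G\setminus S$ is $(m_1+m_2)$-tree-connected.

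The central step is then to partition $E(G\setminus S)$ into edge-disjoint subsets $E_1\supseteq I\setminus S$ and $E_2\subseteq E(G[X,Y])$ such that $E_1$ is $m_1$-tree-connected and $E_2$ is $m_2$-tree-connected. This is a prescribed-edge variant of the Nash--Williams/Tutte decomposition theorem, and its solvability follows from the two tree-connectivity facts already in hand: the sum bound $(m_1+m_2)$-tree-connectivity of $G\setminus S$ and the restricted bound $m_2$-tree-connectivity of the subgraph $G[X,Y]$. A matroid-union argument applied to the graphic matroids of $G\setminus S$ and $G[X,Y]$ (with $I\setminus S$ prescribed for the $m_1$-side) then yields the desired partition. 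Setting $G_1=E_1$ and $G_2=E_2\cup S$, one checks that $G_1$ is $m_1$-tree-connected, $G_2[X,Y]=E_2$ is $m_2$-tree-connected, and $e_{G_2}(X)+e_{G_2}(Y)=|S|=\min\{k_0,bi(G)\}$, as required.

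The main obstacle is precisely this matroid-union step, where one must \emph{simultaneously} force the within-part edges $I\setminus S$ onto the $m_1$-tree-connected side while forbidding any within-part edge from the $m_2$-tree-connected side. Neither of the two tree-connectivity bounds alone suffices; they must hold together to supply the rank-sum condition underlying matroid union. Once that is in place, the rest of the proof is bookkeeping.
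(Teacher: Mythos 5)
Your reduction to the ``central step'' is where the argument breaks: the two facts you propose to feed into matroid union --- that $G\setminus S$ is $(m_1+m_2)$-tree-connected and that $G[X,Y]$ is $m_2$-tree-connected --- do \emph{not} imply the existence of a decomposition of $G\setminus S$ into an $m_1$-tree-connected $E_1\supseteq I\setminus S$ and an $m_2$-tree-connected $E_2\subseteq E(G[X,Y])$. A concrete counterexample with $m_1=m_2=1$, $k_0=0$: take $V(G)=\{x_1,x_2,y_1,y_2\}$ with the simple edges $x_1y_1$, $x_1y_2$, $x_2y_1$ and with $x_1x_2$ and $y_1y_2$ each of multiplicity $3$. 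The trees $\{x_1y_1,x_1x_2,y_1y_2\}$, $\{x_1y_2,x_1x_2,y_1y_2\}$, $\{x_2y_1,x_1x_2,y_1y_2\}$ are edge-disjoint and spanning, so $G$ is $3$-tree-connected, i.e.\ $(m_1+2m_2)$-tree-connected. For the bipartition $X=\{x_1,x_2\}$, $Y=\{y_1,y_2\}$ the factor $G[X,Y]=\{x_1y_1,x_1y_2,x_2y_1\}$ is a spanning tree, hence $m_2$-tree-connected, and Lemma~\ref{lem:bipartite:factor} may well hand you exactly this bipartition. But then the only connected spanning $E_2\subseteq E(G[X,Y])$ is all of $G[X,Y]$, which forces $E_1=I$ to consist solely of within-part edges; $E_1$ is disconnected. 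In matroid-union terms, the rank-sum condition you allude to fails at $F=I$: one gets $|E\setminus F|+m_1r(F)+m_2r(F\cap E_{XY})=3+2+0=5<6$. So the needed inequality is genuinely stronger than the conjunction of your two hypotheses, and the step cannot be repaired by ``bookkeeping.''

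The underlying issue is that you fix the bipartition by applying Lemma~\ref{lem:bipartite:factor} to $G$ itself, which only guarantees $m_2$-tree-connectivity of $G[X,Y]$ and leaves no slack. The paper's proof instead first splits $G$ into an $m_1$-tree-connected $H_1$ and a $2m_2$-tree-connected $H_2$ and applies the bipartite-factor lemma to $H_2$; the reserve of $2m_2$ (rather than $m_2$) edge-disjoint spanning trees in $H_2[X,Y]$ is exactly what allows the within-part edge count of $G_2$ to be adjusted to $\min\{k_0,bi(G)\}$: surplus within-part edges of $H_2$ can be shunted to $H_1$ harmlessly, and in the deficient case $t<b$ one peels a $(b-t)$-tree-connected piece $F_1$ off $H_2[X,Y]$ (possible because $H_2[X,Y]$ is still $(2m_2-t)$-tree-connected and $m_2\ge b$) and trades it against $b-t$ within-part edges of $H_1$. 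In the counterexample above this corresponds to ending up with a \emph{different} bipartition than the one you committed to, which is why your version gets stuck.
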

\begin{proof}
{Decompose $G$ into two factors $H_1$ and $H_2$ such that $H_1$ is $m_1$-tree-connected and $H_2$ is $2m_2$-tree-connected. By Lemma~\ref{lem:bipartite:factor}, there is a bipartition $X,Y$ of $V(G)$ such that $H_2[X,Y]$ is $m_2$-tree-connected. Let $t=e_{H_2}(X)+e_{H_2}(Y)$ and $b=\min\{k_0, bi(G)\}$. If $t> b$, we remove some of the edges of $H_2[X]\cup H_2[Y]$ and add them to $H_1$. Call the resulting graphs $G_1$ and $G_2$. Obviously, this can be done such that $e_{G_2}(X)+e_{G_2}(Y)=b$. If $t= b$, we set $G_1=H_1$ and $G_2=H_2$. If $t< b$, then $H_2[X,Y]$ must clearly be $(2m_2-t)$-tree-connected. Since $m_2\ge b$, we can decompose $H_2[X,Y]$ into two factors $F_1$ and $F_2$ such that $F_1$ is $(b-t)$-tree-connected and $F_2$ is $m_2$-tree-connected. Let $M$ be a factor of $H_1[X]\cup H_1[Y]$ with size $b-t$.
Now, define $G_1=H_1-E(M)+E(F_1)$ and $G_2=H_2-E(F_1)+E(M)$. It is easy to check that $G_1$ and $G_2$ are the desired factors. 
}\end{proof}
The following corollary plays an important role in this section.
\begin{cor}\label{cor:decomposition:bipartite-index}
{If $G$ is an $(m_1+2m_2+1)$-tree-connected graph and $m_2\ge k_0\ge 0$, then $G$ can be decomposed into two factors $G_1$ and $G_2$ such that $G_1$ is $m_1$-tree-connected, $G_2[X,Y]$ is $m_2$-tree-connected for a bipartition $X,Y$ of $V(G)$, $G_i$ whose vertex degrees are even for an arbitrary given integer $i\in \{1,2\}$, and
$$e_{G_2}(X)+e_{G_2}(Y)\ge \min\{k_0, bi(G)\}.$$ 
 Furthermore, the equality can hold when $G$ is $(m_1+2m_2+2)$-tree-connected.
}\end{cor}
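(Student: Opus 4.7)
The plan is to reduce to Theorem~\ref{thm:decomposition:bipartite-index} and to use the extra one or two units of tree-connectivity by isolating a spanning tree $T_0$ and employing a $T$-join inside it as a parity-correcting device. I rely on two standard facts: any spanning tree $T_0$ contains a unique $T$-join for every even subset $T$ of its vertex set, and if a factor contains $r+1$ edge-disjoint spanning trees then removing the edge set of any one of them leaves an $r$-tree-connected factor.

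For the first (inequality) statement, I apply Theorem~\ref{thm:decomposition:bipartite-index} with parameters $(m_1+1, m_2)$, whose hypothesis $(m_1+1)+2m_2=m_1+2m_2+1$ matches the assumption exactly, obtaining a decomposition into an $(m_1+1)$-tree-connected factor $G_1'$ and a factor $G_2^{\star}$ with $G_2^{\star}[X,Y]$ being $m_2$-tree-connected and $e_{G_2^{\star}}(X)+e_{G_2^{\star}}(Y)=\min\{k_0,bi(G)\}$. Let $T_0$ be one of the $m_1+1$ edge-disjoint spanning trees of $G_1'$, so $G_1'\setminus E(T_0)$ is still $m_1$-tree-connected. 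Let $T$ be the set of odd-degree vertices of $G_1'$ when $i=1$, and of $G_2^{\star}$ when $i=2$; since $|T|$ is even, the unique $T$-join $J\subseteq T_0$ exists. Setting $G_1=G_1'\setminus E(J)$ and $G_2=G_2^{\star}\cup J$, a direct parity check shows that $G_i$ has all even degrees; tree-connectivity follows from $G_1\supseteq G_1'\setminus E(T_0)$ and $G_2[X,Y]\supseteq G_2^{\star}[X,Y]$; and $e_{G_2}(X)+e_{G_2}(Y)\ge\min\{k_0,bi(G)\}$ since $G_2\supseteq G_2^{\star}$.

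The main obstacle for the equality claim is that the $T$-join constructed above may contain intra-partition edges, which would make $e_{G_2}(X)+e_{G_2}(Y)$ strictly exceed $\min\{k_0,bi(G)\}$. To bypass this under the stronger hypothesis that $G$ is $(m_1+2m_2+2)$-tree-connected, I apply Theorem~\ref{thm:decomposition:bipartite-index} instead with parameters $(m_1, m_2+1)$, which is permissible because $m_2+1\ge k_0$ and $m_1+2(m_2+1)=m_1+2m_2+2$. The output is an $m_1$-tree-connected factor $G_1''$ and a factor $G_2^{\dagger}$ with $G_2^{\dagger}[X,Y]$ being $(m_2+1)$-tree-connected and $e_{G_2^{\dagger}}(X)+e_{G_2^{\dagger}}(Y)=\min\{k_0,bi(G)\}$. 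I then extract the spanning tree $T_0$ from $G_2^{\dagger}[X,Y]$, so every edge of $T_0$ is inter-partition and $G_2^{\dagger}[X,Y]\setminus E(T_0)$ is still $m_2$-tree-connected. Taking $T$ to be the odd-degree set of $G_1''$ if $i=1$ and of $G_2^{\dagger}$ if $i=2$, the corresponding $T$-join $J\subseteq T_0$ gives the decomposition $G_1=G_1''\cup J$, $G_2=G_2^{\dagger}\setminus E(J)$; and since $J$ consists entirely of inter-partition edges, $e_{G_2}(X)+e_{G_2}(Y)$ remains exactly $\min\{k_0,bi(G)\}$, establishing the equality.
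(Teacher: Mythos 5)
Your proposal is correct and follows essentially the same route as the paper: both parts reserve one extra spanning tree (from the $m_1$-side for the inequality, from the bipartite $m_2$-side for the equality) and redistribute its edges to fix the parity of $G_i$, with your $T$-join formulation simply making explicit the parity-correcting step the paper leaves implicit.
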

\begin{proof}
{By Theorem~\ref{thm:decomposition:bipartite-index}, one can conclude that the graph $G$ can be decomposed into three factors $T$, $G'_1$, and $G'_2$
 such that $T$ is connected, $G'_1$ is $m_1$-tree-connected, $G'_2[X,Y]$ is $m_2$-tree-connected for a bipartition $X,Y$ of $V(G)$, and $e_{G'_2}(X)+e_{G'_2}(Y)= \min\{k_0, bi(G)\}$. 
Now add some of the edges of $T$ to $G'_1$ and add the remaining ones to $G'_2$ such that among the resulting graphs $G_1$ and $G_2$, the graph $G_i$ is Eulerian. 
Obviosuly, $e_{G_2}(X)+e_{G_2}(Y)\ge \min\{k_0, bi(G)\}$.
Now, assume that $G_1$ is $(m_1+2m_2+2)$-tree-connected.
Again, by applying Theorem~\ref{thm:decomposition:bipartite-index}, the graph $G$ can be decomposed into 
two factors $G'_1$ and $G'_2$ such that $G'_1$ is $m_1$-tree-connected, $G'_2[X,Y]$ is $(m_2+1)$-tree-connected for a bipartition $X,Y$ of $V(G)$, and $e_{G'_2}(X)+e_{G'_2}(Y)= \min\{k_0, bi(G)\}.$
Next, we decompose $G_2[X,Y]$ into two factors $T$ and $F$ such that $T$ is connected and $F$ is $m_2$-tree-connected.
Finally, we add some of the edges of $T$ to $G'_1$ and add the remaining ones to $G'_2$ such that among the resulting graphs $G_1$ and $G_2$, the graph $G_i$ is Eulerian.
}\end{proof}
\begin{lem}{\rm (\cite{ModuloFactorBounded})}\label{lem:low:bi-index:compatible}
{Let $G$ be a $(2k-3)$-edge-connected graph, let $k$ be an integer, $k\ge 2$.
If $f:V(G)\rightarrow Z_k$ is a compatible mapping with respect to a bipartition $X,Y$ of $G$ satisfying $e_G(X)+e_G(Y)< k-1$, then $f$ is compatible with $G$.
}\end{lem}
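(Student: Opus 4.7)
The plan is to verify compatibility of $f$ with an arbitrary bipartition $(X',Y')$ of $V(G)$. I would set $A=X\cap X'$, $B=X\cap Y'$, $C=Y\cap X'$, $D=Y\cap Y'$, and begin from the identity
\[\sum_{v\in X'} f(v)-\sum_{v\in Y'} f(v) = \Big(\sum_{v\in X} f(v)-\sum_{v\in Y} f(v)\Big)+2\Big(\sum_{v\in C} f(v)-\sum_{v\in B} f(v)\Big).\]
The hypothesized compatibility of $f$ with respect to $(X,Y)$ gives an integer $z_0\in[-e_G(Y),e_G(X)]$ with $\sum_X f-\sum_Y f\equiv 2z_0\pmod{k}$. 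Writing $\sigma=\sum_C f-\sum_B f$ and $M'=e_G(X')+e_G(Y')$, the task reduces to exhibiting $z'\in[-e_G(Y'),e_G(X')]$ with $2z'\equiv 2(z_0+\sigma)\pmod{k}$.

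Next I would split on $M'$. In the case $M'\geq k-1$, the interval $[-e_G(Y'),e_G(X')]$ contains at least $k$ consecutive integers, hence meets every residue class modulo $k$, so a suitable $z'$ exists immediately. (The right-hand side $2(z_0+\sigma)$ is even, which is consistent with the available values whether $k$ is odd or even.)

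In the remaining case $M'\leq k-2$, I would invoke the $(2k-3)$-edge-connectivity. Using $d_G(X),d_G(X')\geq 2k-3$ and expanding each cut in terms of the four blocks, one obtains
\[d_G(X)+d_G(X')=2\bigl(e_G(A,D)+e_G(B,C)\bigr)+d_G(A\cup D,\,B\cup C)\geq 4k-6,\]
where $d_G(A\cup D,B\cup C)=e_G(A,B)+e_G(A,C)+e_G(B,D)+e_G(C,D)$. From the hypothesis $e_G(X)+e_G(Y)\leq k-2$ I get $e_G(A,B)+e_G(C,D)\leq k-2$, and from $M'\leq k-2$ I get $e_G(A,C)+e_G(B,D)\leq k-2$. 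Thus $d_G(A\cup D,B\cup C)\leq 2k-4<2k-3$, forcing $A\cup D=\emptyset$ or $B\cup C=\emptyset$ by edge-connectivity. In either sub-case $(X',Y')$ coincides with $(X,Y)$ up to swapping the two parts, and since the definition of compatibility is symmetric under such a swap (the roles of $x$ and $y$ in the two disjunctive clauses simply interchange), compatibility with respect to $(X',Y')$ is supplied directly by the hypothesis.

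The main obstacle I expect lies in the tightness of the second case: the strict inequality $e_G(X)+e_G(Y)<k-1$ and the $(2k-3)$-edge-connectivity are used together at the limit to force $2k-4<2k-3$. Any weakening of either hypothesis would permit a nontrivial cut $d_G(A\cup D,B\cup C)$ that cannot be ruled out, and the reduction to $(X',Y')=(X,Y)$ would fail; the bookkeeping here must be done carefully to confirm that the bound is really strict.
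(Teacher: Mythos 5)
This lemma is imported from \cite{ModuloFactorBounded} and the present paper contains no proof of it, so there is no in-paper argument to compare against; I can only assess your proposal on its own terms, and it is correct. Your reformulation of compatibility with respect to $(X',Y')$ as the existence of $z'\in[-e_G(Y'),e_G(X')]$ with $2z'\equiv\sum_{X'}f-\sum_{Y'}f\pmod{k}$ matches the definition, the four-block identity $\sum_{X'}f-\sum_{Y'}f=(\sum_{X}f-\sum_{Y}f)+2(\sum_{C}f-\sum_{B}f)$ is right, and the case split is sound: when $e_G(X')+e_G(Y')\ge k-1$ the interval of admissible $z'$ contains $k$ consecutive integers and hence every residue class, while when $e_G(X')+e_G(Y')\le k-2$ the cut $d_G(B\cup C)=e_G(A,B)+e_G(C,D)+e_G(A,C)+e_G(B,D)\le (k-2)+(k-2)=2k-4<2k-3$ forces $X\triangle X'=B\cup C$ (or its complement $A\cup D$) to be empty, so $(X',Y')$ is $(X,Y)$ or its swap, and compatibility is indeed symmetric under swapping the two clauses of the definition. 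One small streamlining: the displayed inequality $d_G(X)+d_G(X')=2(e_G(A,D)+e_G(B,C))+d_G(A\cup D,B\cup C)\ge 4k-6$ is never used; applying $(2k-3)$-edge-connectivity directly to the single cut separating $A\cup D$ from $B\cup C$ is all that is needed, and it is exactly here that the strict hypothesis $e_G(X)+e_G(Y)<k-1$ is consumed, as you correctly observe.
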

We also need the following lemma which plays an essential role in this section.
\begin{lem}{\rm (\cite{ModuloFactorBounded})}\label{lem:low:bi-index}
{Let $G$ be a graph, let $k$ be an integer, $k\ge 2$, and let $f:V(G)\rightarrow Z_k$ be a compatible mapping.
If $G[X,Y]$ is $(3k-3)$-edge-connected and $e_G(X)+e_G(Y)\le k-1$ for a bipartition of $X,Y$ of $V(G)$,
then $G$ has an $f$-factor $H$ such that for each vertex~$v$,
$$\lfloor \frac{d_G(v)}{2}\rfloor -(k-1)
\le d_H(v)\le 
 \lceil \frac{d_G(v)}{2}\rceil +(k-1).$$
}\end{lem}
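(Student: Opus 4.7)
The plan is to reduce the statement to the bipartite case of Theorem~\ref{Inro:factor:Zk} applied to $G[X,Y]$, by carefully absorbing the ``non-bipartite'' edge set $M:=E(G[X])\cup E(G[Y])$ into $H$; note $|E(M)|\le k-1$. Since $f$ is compatible with $G$, and in particular with respect to the bipartition $(X,Y)$, I may assume after possibly swapping $X$ and $Y$ that there is an integer $x$ with $0\le x\le e_G(X)$ satisfying $\sum_{v\in X}f(v)-2x\stackrel{k}{\equiv}\sum_{v\in Y}f(v)$.

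I would pick a subset $M_H\subseteq E(G[X])$ with $|M_H|=x$ and define the modified demand $f'(v):=f(v)-d_{M_H}(v)\pmod{k}$ on $V(G[X,Y])$. The identity $\sum_{v\in X}d_{M_H}(v)=2x$ forces $\sum_{v\in X}f'(v)\stackrel{k}{\equiv}\sum_{v\in Y}f'(v)$, so $f'$ is compatible with the bipartite graph $G[X,Y]$ with respect to its bipartition; since $G[X,Y]$ is bipartite and $(3k-3)$-edge-connected (hence in particular $(2k-1)$-edge-connected), this is equivalent to $f'$ being compatible with $G[X,Y]$ as a whole. Theorem~\ref{Inro:factor:Zk} then produces an $f'$-factor $H'$ of $G[X,Y]$ with $\lfloor d_{G[X,Y]}(v)/2\rfloor-(k-1)\le d_{H'}(v)\le \lceil d_{G[X,Y]}(v)/2\rceil+(k-1)$ for every vertex $v$. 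I set $H:=H'\cup M_H$; the congruence $d_H(v)\stackrel{k}{\equiv}f(v)$ holds directly by construction, since $d_H(v)=d_{H'}(v)+d_{M_H}(v)\equiv f'(v)+d_{M_H}(v)=f(v)\pmod{k}$ for $v\in X$ and $d_H(v)=d_{H'}(v)\equiv f(v)\pmod{k}$ for $v\in Y$.

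The main obstacle will be verifying the two-sided degree bounds $\lfloor d_G(v)/2\rfloor-(k-1)\le d_H(v)\le \lceil d_G(v)/2\rceil+(k-1)$. At vertices $v\in X$ with $d_{G[X]}(v)=0$ (and $v\in Y$ with $d_{G[Y]}(v)=0$) the bounds are immediate from those on $d_{H'}(v)$ because $d_G(v)=d_{G[X,Y]}(v)$ there. The delicate cases occur when $d_{G[X]}(v)$ or $d_{G[Y]}(v)$ is positive and its parity, interacting with that of $d_{G[X,Y]}(v)$, shifts the ceiling or floor of $d_G(v)/2$ by one. My plan for these is to use the freedom in selecting the concrete edges in $M_H$ so that $d_{M_H}(v)$ lands on the right parity at each troublesome vertex; the hypothesis $|E(M)|\le k-1$ restricts the number of such vertices enough that a suitable choice is always available. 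When a single-sided selection in $E(G[X])$ is insufficient, I would invoke the symmetric alternative $\sum_{v\in X}f(v)\stackrel{k}{\equiv}\sum_{v\in Y}f(v)-2y$ of the compatibility hypothesis and rerun the construction with a subset of $E(G[Y])$, or allow a mixed $M_H\subseteq E(M)$ with $|M_H\cap E(G[X])|-|M_H\cap E(G[Y])|=x$; the $(k-1)$-slack in the bipartite degree bounds then absorbs whatever small discrepancy the edges in $M$ contribute to $d_H(v)$.
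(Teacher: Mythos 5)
First, a remark on scope: the paper itself offers no proof of Lemma~\ref{lem:low:bi-index} — it is imported verbatim from \cite{ModuloFactorBounded} — so there is no in-paper argument to measure you against; I can only assess your proposal on its own terms. The congruence bookkeeping in your reduction is correct: choosing $M_H\subseteq E(G[X])$ with $|M_H|=x$ does make $f'$ compatible with $G[X,Y]$ with respect to its bipartition, and since $3k-3\ge 2k-1$ this upgrades to full compatibility. The gap is exactly at the step you defer to ``freedom in selecting the edges of $M_H$'': that freedom does not exist, and the $(k-1)$ slack you hope will absorb the discrepancy is already spent.

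Write $G'=G[X,Y]$ and let $d_M(v)$ be the degree of $v$ in $G[X]\cup G[Y]$. Theorem~\ref{Inro:factor:Zk} places $d_{H'}(v)$ in a window of radius $k-1$ about $d_{G'}(v)/2$, while the target window for $d_H(v)=d_{H'}(v)+d_{M_H}(v)$ has the \emph{same} radius $k-1$ but is centred at $d_G(v)/2=(d_{G'}(v)+d_M(v))/2$. Hence the conclusion follows only if $d_{M_H}(v)\in\{\lfloor d_M(v)/2\rfloor,\lceil d_M(v)/2\rceil\}$ at every vertex; there is no residual slack. Summing this over $X$ and $Y$ forces $|M_H\cap E(G[X])|\approx e_G(X)/2$ and $|M_H\cap E(G[Y])|\approx e_G(Y)/2$, whereas the compatibility congruence pins $|M_H\cap E(G[X])|-|M_H\cap E(G[Y])|$ to the residue-determined value $x$ (unique modulo $k$ when $k$ is odd, hence unique in $\{0,\dots,k-1\}$). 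These two constraints are independent and generically inconsistent. Concretely, take $k=3$, $e_G(Y)=0$, and $G[X]$ a single loop at $u$ (or, for $k\ge 4$, a star of $k-1$ edges centred at $u$), with $f$ chosen so that only $x=0$ satisfies the congruence: then $M_H=\emptyset$, $H=H'$, and the guaranteed lower bound $\lfloor d_{G'}(u)/2\rfloor-(k-1)$ falls short of the required $\lfloor (d_{G'}(u)+d_M(u))/2\rfloor-(k-1)$ by about $d_M(u)/2$; since the bound of Theorem~\ref{Inro:factor:Zk} is sharp, this deficit cannot be assumed away. The fix is not a cleverer $M_H$ but a stronger bipartite input: one needs a version with per-vertex, asymmetric windows (in the spirit of Theorem~\ref{thm:bi:k:partition}, with $s$ and $s_0$ shifted by $d_{M_H}(v)$ and $d_M(v)-d_{M_H}(v)$ respectively), which is where the hypothesis $e_G(X)+e_G(Y)\le k-1$ actually earns its keep.
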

The following result is a weaker version of a result in~\cite{ModuloFactorBounded} for graphs with lower tree-connectivity.
\begin{cor}
{Let $G$ be a graph with $bi(G)\ge k$.
If $G$ is $3k$-tree-connected, then it has  $k$ edge-disjoint non-bipartite spanning Eulerian subgraphs.
}\end{cor}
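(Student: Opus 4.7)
The plan rests on the following parity observation: for any fixed bipartition $(X,Y)$ of $V(G)$, call an edge of $G$ \emph{bad} if both its endpoints lie in $X$ or both lie in $Y$. An Eulerian (connected, all-even-degree) subgraph $H$ of $G$ is bipartite if and only if the bad edges of $H$ form an edge cut of $H$; since every edge cut of an Eulerian graph has even cardinality, any Eulerian subgraph of $G$ that contains an \emph{odd} number of bad edges is automatically non-bipartite. It therefore suffices to construct $k$ pairwise edge-disjoint spanning connected Eulerian subgraphs $H_1,\ldots,H_k$ of $G$, each containing exactly one bad edge with respect to a bipartition $(X,Y)$ achieving $bi(G)\ge k$.

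First I fix such a bipartition $(X,Y)$ and apply Theorem~\ref{thm:decomposition:bipartite-index} with $m_1=m_2=k$ and $k_0=k$: the hypothesis $m_1+2m_2=3k$ matches our tree-connectivity assumption and $m_2\ge k_0$, so this produces a decomposition $G=G_1\cup G_2$ in which $G_1$ is $k$-tree-connected, $G_2[X,Y]$ is $k$-tree-connected, and $e_{G_2}(X)+e_{G_2}(Y)=k$; label the bad edges of $G_2$ as $e_1=u_1v_1,\ldots,e_k=u_kv_k$. For each $i$, I aim to extract a spanning connected bipartite subgraph $F_i$ of $G[X,Y]$, with the $F_i$'s pairwise edge-disjoint, whose degree parity is given by $f_i:=\chi_{u_i}+\chi_{v_i}$ (odd at $u_i,v_i$ and even elsewhere); setting $H_i:=F_i\cup\{e_i\}$ then yields a spanning connected Eulerian subgraph containing exactly one bad edge, hence non-bipartite by the parity observation, with the $H_i$'s pairwise edge-disjoint.

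Each $F_i$ is produced by applying Theorem~\ref{thm:modulo-2:partition-connected} with $m=1$, $m_0=0$, $l_0=0$, $s=0$ and $f=f_i$ to the residual bipartite graph $G[X,Y]\setminus\bigcup_{j<i}E(F_j)$: compatibility of $f_i$ is automatic since $\sum f_i=2\equiv 0\pmod 2$, and the partition-connectivity hypothesis amounts to $2$-tree-connectivity of the residual. This yields $F_i$ spanning and connected with the required parity pattern, and then $H_i:=F_i\cup\{e_i\}$ is the desired Eulerian non-bipartite spanning subgraph.

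The main obstacle is sustaining $2$-tree-connectivity of the residual bipartite graph through all $k$ iterations, since Theorem~\ref{thm:modulo-2:partition-connected} only provides loose degree bounds on each $F_i$ and thus does not a priori control how much tree-connectivity each $F_i$ consumes. I plan to overcome this by augmenting $G[X,Y]$'s bipartite tree-connectivity: $G_2[X,Y]$ supplies $k$-tree-connectivity directly, and Lemma~\ref{lem:tree-connected:bipartite} applied to $G_1$ extracts an additional $\lfloor k/2\rfloor$-tree-connected bipartite factor, giving $G[X,Y]$ an effective $\lfloor 3k/2\rfloor$-tree-connectivity to begin with. A Nash-Williams partition-counting argument, combined where needed with the tighter degree bounds from Theorem~\ref{thm:modulo:bi:first} in place of Theorem~\ref{thm:modulo-2:partition-connected} for the heavier iterations, then shows that $2$-tree-connectivity of the residual is preserved throughout and handles the small base case $k=1$ separately.
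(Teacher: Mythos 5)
Your first step (applying Theorem~\ref{thm:decomposition:bipartite-index} with $m_1=m_2=k_0=k$) and your parity criterion for non-bipartiteness are both fine, and they match the paper's setup. The gap is in how you produce the $k$ edge-disjoint Eulerian subgraphs. You want each $F_i$ to be a \emph{spanning connected} parity factor of the residual bipartite graph, extracted iteratively via Theorem~\ref{thm:modulo-2:partition-connected}; but with $m=1$, $m_0=0$, $s=l_0=0$ that theorem only guarantees $1\le d_{F_i}(v)\le\max\{2,d(v)\}$, i.e.\ no upper bound at all, so a single $F_i$ may swallow essentially the whole residual and destroy connectivity for the next iteration. Your proposed repairs do not close this: the degree-controlled versions (Theorem~\ref{thm:modulo:bi:first} or its mod-$2$ analogue) each consume roughly half the edges at every vertex, so after one or two rounds the residual falls below the required $2$-tree-connectivity; and the bipartite tree-connectivity you can actually guarantee for $G[X,Y]$ from a $3k$-tree-connected $G$ (about $\lfloor 3k/2\rfloor$ via Lemma~\ref{lem:tree-connected:bipartite}) is nowhere near the $\Theta(k)$-many factors of ``half the graph'' your iteration would need. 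As written, the argument does not go through within the stated hypothesis.

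The missing idea is that the Eulerian subgraphs need not be built entirely inside the bipartite part: decompose the $k$-tree-connected $G_1$ into spanning trees $T_1,\ldots,T_k$ and $G_2[X,Y]$ into spanning trees $T'_1,\ldots,T'_k$, and set $H_i=T'_i+e_i$, which is spanning, connected, and non-bipartite (it contains an odd cycle, since $T'_i$ is bipartite with parts $X,Y$ and $e_i$ lies inside a part). Since $\sum_v d_{H_i}(v)$ is even, the tree $T_i$ contains a subforest $F_i$ with $d_{F_i}(v)\equiv d_{H_i}(v)\pmod 2$ for every $v$ (the standard $T$-join-in-a-tree argument), and $F_i\cup H_i$ is the desired spanning Eulerian non-bipartite subgraph; edge-disjointness is automatic because the $T_i$, $T'_i$, and $e_i$ are pairwise disjoint. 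This parity-correction step replaces your entire iterative machinery and is exactly what makes the $3k$ bound suffice.
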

\begin{proof}
{By Theorem~\ref{thm:decomposition:bipartite-index},  the graph $G$ can be decomposed into two factors  $G_1$, and $G_2$
 such that $G_1$ is $k$-tree-connected, $G_2[X,Y]$ is $k$-tree-connected for a bipartition $X,Y$ of $V(G)$, and $e_{G_2}(X)+e_{G_2}(Y)\ge k$. 
Decompose  $G_1$ into  $k$ connected factors $T_1,\ldots, T_k$, and decompose  $G_2[X,Y]$ into  $k$ connected factors  $T'_1,\ldots, T'_k$.
Take $e_1,\ldots, e_k$ to be $k$ edges of $G_2[X]\cup G_2[Y]$.
Define $H_i=T'_i+e_i$. Note that $H_i$is not bipartite.
Let $F_i$ be a factor of $T_i$ such that for each vertex $v$, $d_{F_i}(v)$ and $d_{H_i}(v)$ have the same parity.
It is enough to set $G_i=F_i\cup H_i$ to construct the desired Eulerian factors.
}\end{proof}
\subsection{Graphs with tree-connectivity at least $2m+2m_0+6k-5$}
The following theorem gives a sufficient edge-connectivity condition for the existence of tree-connected $f$-factors modulo $k$ 
in general graphs with a given bound on degrees.
\begin{thm}\label{thm:higherbi:sharpbound}
{Let $G$ be a graph, let $k$ be an integer, $k\ge 2$, and let $f:V(G)\rightarrow Z_k$ be a compatible mapping. If $G$ is $(2m+2m_0+6k-5)$-tree-connected and $m+m_0>0$, 
then it has an $m$-tree-connected $f$-factor $H$ such that its complement is
 $m_0$-tree-connected and for each vertex~$v$,
$$\lfloor \frac{d_G(v)}{2}\rfloor -(k-1) -m_0\le d_H(v) \le \lceil \frac{d_G(v)}{2}\rceil+(k-1)+m.$$
}\end{thm}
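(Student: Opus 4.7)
The plan is to parallel the strategy behind Theorem~\ref{thm:modulo:bi:first} in the general (non-bipartite) setting by first decomposing $G$ into a highly tree-connected factor $G_1$ and a ``nearly bipartite'' factor $G_2$ of small bipartite index, and then extracting the tree-connectedness and the modular structure from each piece separately.

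First, I would apply Corollary~\ref{cor:decomposition:bipartite-index} with parameters $m_1=2m+2m_0$, $m_2=3k-3$, and $k_0=k-1$. Since $m_2\ge k_0$ and $G$ is $(m_1+2m_2+1)=(2m+2m_0+6k-5)$-tree-connected, the corollary delivers a decomposition $G=G_1\cup G_2$ in which $G_1$ is $(2m+2m_0)$-tree-connected, $G_2[X,Y]$ is $(3k-3)$-tree-connected for some bipartition $(X,Y)$ of $V(G)$, all vertex degrees of $G_2$ are even (by selecting $i=2$), and $e_{G_2}(X)+e_{G_2}(Y)\ge \min\{k-1, bi(G)\}$. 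Choosing $(X,Y)$ so that $e_G(X)+e_G(Y)=bi(G)$ gives $e_{G_2}(X)+e_{G_2}(Y)\le bi(G)$; combined, this forces $e_{G_2}(X)+e_{G_2}(Y)\le k-1$ whenever $bi(G)\le k-1$, and the case $bi(G)\ge k$ will be reduced to this one by first moving a few non-bipartite edges of $G_2$ into $G_1$, exploiting the extra compatibility of $f$ available once $bi(G)\ge k-1$.

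Next, I would apply Theorem~\ref{thm:bounded:partition} to the $(2m+2m_0)$-edge-connected graph $G_1$ to obtain an $m$-tree-connected factor $H_1$ of $G_1$ whose complement in $G_1$ is $m_0$-tree-connected and which satisfies $\lfloor d_{G_1}(v)/2\rfloor-m_0\le d_{H_1}(v)\le \lceil d_{G_1}(v)/2\rceil+m$. Setting $f'(v):=f(v)-d_{H_1}(v)\pmod k$, for any bipartition $(A,B)$ of $V(G)$ one has
$$\sum_{v\in A}f'(v)-\sum_{v\in B}f'(v)\;\equiv\; \sum_{v\in A}f(v)-\sum_{v\in B}f(v)\;-\;2\bigl(e_{H_1}(A)-e_{H_1}(B)\bigr) \pmod k,$$
and the compatibility slack of $f$ on $G$, together with $e_G(A)-e_{H_1}(A)\ge e_{G_2}(A)$, should translate into the slack required for compatibility of $f'$ on $G_2$. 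I would then apply Lemma~\ref{lem:low:bi-index} to $G_2$ with $f'$ (valid because $G_2[X,Y]$ is $(3k-3)$-edge-connected and $e_{G_2}(X)+e_{G_2}(Y)\le k-1$) to produce an $f'$-factor $H_2$ of $G_2$ satisfying $\lfloor d_{G_2}(v)/2\rfloor-(k-1)\le d_{H_2}(v)\le \lceil d_{G_2}(v)/2\rceil+(k-1)$.

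Finally, I would set $H:=H_1\cup H_2$ and verify that $H$ is the desired factor: it contains $H_1$ so it is $m$-tree-connected; its complement contains $G_1\setminus E(H_1)$ so it is $m_0$-tree-connected; and $d_H(v)\equiv d_{H_1}(v)+f'(v)\equiv f(v)\pmod k$. Because $G_2$ has even degrees, $\lceil d_{G_2}(v)/2\rceil = d_{G_2}(v)/2$, so the ceilings add without loss, giving $d_H(v)\le \lceil d_{G_1}(v)/2\rceil + m + d_{G_2}(v)/2 + (k-1) = \lceil d_G(v)/2\rceil+(k-1)+m$; the lower bound is obtained symmetrically. The main difficulty lies in verifying compatibility of $f'$ with $G_2$ over \emph{every} bipartition $(A,B)$ rather than just the distinguished $(X,Y)$, and in controlling the case $bi(G)\ge k$ in the decomposition: Corollary~\ref{cor:decomposition:bipartite-index} only provides a lower bound on $e_{G_2}(X)+e_{G_2}(Y)$, so one must work to bring this quantity down to at most $k-1$ as required for Lemma~\ref{lem:low:bi-index}.
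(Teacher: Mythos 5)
Your overall strategy is the same as the paper's (decompose $G$ into a highly tree-connected piece $G_1$ and a nearly bipartite piece $G_2$, apply Theorem~\ref{thm:bounded:partition} to $G_1$ and Lemma~\ref{lem:low:bi-index} to $G_2$ with the shifted mapping $f'$), but there is a genuine gap in how you set up the decomposition, and it is exactly the difficulty you flag at the end without resolving. With $m_1=2m+2m_0$, $m_2=3k-3$, $k_0=k-1$ and tree-connectivity $m_1+2m_2+1$, Corollary~\ref{cor:decomposition:bipartite-index} only guarantees $e_{G_2}(X)+e_{G_2}(Y)\ge\min\{k-1,bi(G)\}$, with no upper bound: the edges of the connected factor that are redistributed to make $G_2$ Eulerian may lie inside $X$ or inside $Y$, so $e_{G_2}(X)+e_{G_2}(Y)$ can exceed $k-1$ by as much as $|V(G)|-1$. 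Your proposed fixes do not close this. You cannot ``choose $(X,Y)$ so that $e_G(X)+e_G(Y)=bi(G)$'': the bipartition in Theorem~\ref{thm:decomposition:bipartite-index} is produced by Lemma~\ref{lem:bipartite:factor} applied to a tree-connected factor and is not at your disposal (and even if it were, $bi(G)$ may exceed $k-1$). Moving surplus non-bipartite edges of $G_2$ into $G_1$ afterwards destroys the even-degree property of $G_2$ that your ceiling computation relies on, and an unbounded number of edges may need to be moved. Without $e_{G_2}(X)+e_{G_2}(Y)\le k-1$, Lemma~\ref{lem:low:bi-index} simply does not apply.

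The paper's proof resolves this by shifting one unit of tree-connectivity: it takes $m_1=2m+2m_0-1$ and makes $G_1$ (not $G_2$) Eulerian, so that $m_1+2m_2+2=2m+2m_0+6k-5$ and the ``Furthermore'' clause of Corollary~\ref{cor:decomposition:bipartite-index} yields the exact equality $e_{G_2}(X)+e_{G_2}(Y)=\min\{k-1,bi(G)\}\le k-1$. The Eulerian property of $G_1$ then does double duty: a $(2m+2m_0-1)$-edge-connected Eulerian graph is automatically $(2m+2m_0)$-edge-connected (all cuts are even), so Theorem~\ref{thm:bounded:partition} still applies, and $d_{G_1}(v)$ being even makes the floors and ceilings add up to $\lfloor d_G(v)/2\rfloor$ and $\lceil d_G(v)/2\rceil$ just as in your computation. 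A secondary weakness of your write-up is the compatibility of $f'$ with $G_2$: verifying it over every bipartition directly is not how the paper proceeds; instead one checks it only for the distinguished bipartition $(X,Y)$ (using $\sum_{v\in X}d_{H_1}(v)\stackrel{k}{\equiv}\sum_{v\in Y}d_{H_1}(v)$ when $G_1$ is bipartite with respect to $(X,Y)$, which happens precisely in the case $bi(G_2)=bi(G)<k-1$) and then upgrades to full compatibility via Lemma~\ref{lem:low:bi-index:compatible}; when $bi(G_2)\ge k-1$ compatibility is automatic from the parity of $\sum_v f'(v)$.
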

\begin{proof}
{By Corollary~\ref{cor:decomposition:bipartite-index}, the graph $G$ can be decomposed into two graphs $G_1$ and $G_2$ such that $G_1$ is a $(2m+2m_0-1)$-tree-connected Eulerian graph and $G_2[X,Y]$ is $(3k-3)$-tree-connected and $e_{G_2}(X)+e_{G_2}(X) =\min\{k-1, bi(G)\}$ for a bipartition $X,Y$ of $V(G)$.
Obviously, the graph $G_1$ must be $(2m+2m_0)$-edge-connected.
Thus by Theorem~\ref{thm:bounded:partition}, 
the graph $G_1$ has an $m$-tree-connected factor $H_1$
 such that its complement is $m_0$-tree-connected and for each vertex $v$,
$ d_{G_1}(v)/2 -m_0\le d_{H_1}(v) \le d_{G_1}(v)/2+m$.
For each vertex $v$, define $f'(v)=f(v)-d_{H_1}(v)$.
We claim that $f'$ is compatible with $G_2$.
Since $f$ is compatible with $G$, $\sum_{v\in V(G)}f(v)$ and also $\sum_{v\in V(G)}f'(v)$ are even when $k$ is even.
Thus if $bi(G_2)\ge k-1$, then $f'$ is clearly compatible with $G_2$.
Otherwise, $bi(G_2)=bi(G)<k-1$ and so $G_1$ is a bipartite graph with partite sets $X$ and $Y$.
Since the mapping $f$ is compatible with $G$, we may assume that there is an integer number $x$ with $0\le x\le e_G(X)$ such that $\sum_{v\in X}f(v)-2x\stackrel{k}{\equiv}\sum_{v\in Y}f(v)$.
Since $\sum_{v\in X}d_{H_1}(v)\stackrel{k}{\equiv}\sum_{v\in Y}d_{H_1}(v)$, we must have 
$\sum_{v\in X}f'(v)-2x\stackrel{k}{\equiv}\sum_{v\in Y}f'(v)$.
Hence the mapping $f'$ is compatible with $G_2$ with respect to the bipartition $X,Y$ of $V(G)$.
Since $e_{G_2}(X)+e_{G_2}(Y)< k-1$ and $G_2$ is $(2k-3)$-edge-connected, by Lemma~\ref{lem:low:bi-index:compatible}, $f'$ must be compatible with $G_2$ and so the claim holds.
Therefore, by Lemma~\ref{lem:low:bi-index}, the graph $G_2$ has an $f'$-factor $H_2$ such that for each vertex $v$,
$\lfloor d_{G_2}(v)/2\rfloor -(k-1)
\le d_{H_2}(v)\le 
 \lceil d_{G_2}(v)/2\rceil +(k-1).$
Hence $H_1\cup H_2$ is the desired $f$-factor we are looking for.
}\end{proof}
\begin{remark}
{Note that when $k$ is odd, the needed tree-connectivity of Theorem~\ref{thm:higherbi:sharpbound} can be reduced by one by choosing $G_1$ $(2m+2m_0)$-tree-connected (not necessarily Eulerian) and choosing $G_2$ $(3k-4)$-tree-connected and Eulerian.
}\end{remark}
\begin{cor}\label{cor:f,f+k}
{Let $G$ be a $(2m+2m_0+6k-5)$-tree-connected graph, $k\ge m+m_0> 0$, and let $f$ be a positive integer-valued function on $V(G)$. Assume that $f$ is compatibly with $G$ (modulo $k$).
If for each vertex $v$, $f(v)+m_0\le \frac{1}{2}d_G(v)\le f(v)+k-m$,
then $G$ has an $m$-tree-connected factor $H$ such that its complement is $m_0$-tree-connected and for each vertex $v$,
$$d_H(v)\in \{f(v),f(v)+k\}.$$
}\end{cor}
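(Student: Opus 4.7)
The plan is to reduce the statement directly to Theorem~\ref{thm:higherbi:sharpbound} by interpreting $f$ modulo $k$ and then exploiting the sandwich condition $f(v)+m_0\le d_G(v)/2\le f(v)+k-m$ to squeeze the admissible residue class down to exactly two values.

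First, I would view $f$ as a mapping into $Z_k$ in the obvious way (reducing each $f(v)$ modulo $k$); the hypothesis that $f$ is compatible with $G$ (mod $k$) says precisely that the resulting mapping satisfies the hypothesis of Theorem~\ref{thm:higherbi:sharpbound}. Since $G$ is $(2m+2m_0+6k-5)$-tree-connected and $m+m_0>0$, the theorem yields an $m$-tree-connected $f$-factor $H$ whose complement is $m_0$-tree-connected, and which satisfies, for every vertex $v$,
\[
\lfloor d_G(v)/2\rfloor-(k-1)-m_0\le d_H(v)\le \lceil d_G(v)/2\rceil+(k-1)+m.
\]

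Next, I would translate the hypotheses $f(v)+m_0\le d_G(v)/2$ and $d_G(v)/2\le f(v)+k-m$ into integer bounds. Since $f(v)+m_0$ and $f(v)+k-m$ are integers, the inequalities imply $\lfloor d_G(v)/2\rfloor\ge f(v)+m_0$ and $\lceil d_G(v)/2\rceil\le f(v)+k-m$. Inserting these into the bounds above gives
\[
f(v)-(k-1)\le d_H(v)\le f(v)+(2k-1).
\]
Since $d_H(v)\equiv f(v)\pmod{k}$, the only integers in the interval $[f(v)-k+1,\,f(v)+2k-1]$ that are congruent to $f(v)$ modulo $k$ are $f(v)$ and $f(v)+k$. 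Hence $d_H(v)\in\{f(v),f(v)+k\}$, as required.

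There is essentially no real obstacle here: the proof is a one-line application of Theorem~\ref{thm:higherbi:sharpbound} followed by an elementary residue argument. The only small thing to be careful about is the floor/ceiling manipulation showing that the integrality of $f(v)+m_0$ and $f(v)+k-m$ lets us move the bounds inside the floor and ceiling without loss; once that is observed, the congruence $d_H(v)\equiv f(v)\pmod k$ forces exactly the two claimed values.
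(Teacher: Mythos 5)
Your argument is essentially the paper's own proof: apply Theorem~\ref{thm:higherbi:sharpbound}, use the integrality of $f(v)+m_0$ and $f(v)+k-m$ to get $f(v)-(k-1)\le d_H(v)\le f(v)+2k-1$, and let the congruence $d_H(v)\stackrel{k}{\equiv}f(v)$ pin down the two values. The one thing you miss is the case $k=1$, which the hypothesis $k\ge m+m_0>0$ permits: Theorem~\ref{thm:higherbi:sharpbound} is stated only for $k\ge 2$, so your invocation of it is not licensed there; the paper covers that case by appealing instead to Theorem~\ref{thm:bounded:partition}, whose bounds $\lfloor d_G(v)/2\rfloor-m_0\le d_H(v)\le\lceil d_G(v)/2\rceil+m$ combined with your same integrality observation give $f(v)\le d_H(v)\le f(v)+1$ directly. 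Adding that one-line case split makes your proof complete.
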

\begin{proof}
{By applying Theorem~\ref{thm:higherbi:sharpbound} when $k\ge 2$ and by applying Theorem~\ref{thm:bounded:partition} when $k=1$, the graph $G$ has an $m$-tree-connected $f$-factor $H$ such that its complement is $m_0$-tree-connected and for each vertex $v$,
 $f(v) -(k-1)\le \lfloor d_G(v)/2\rfloor -(k-1) -m_0\le d_H(v) \le \lceil d_G(v)/2\rceil+(k-1)+m\le f(v)+2k-1$.
Hence $H$ is the desired factor.
}\end{proof}
\begin{lem}\label{lem:regular:bi-index}
{If $G$ is a non-bipartite $r$-regular graph, then
$bi(G)\ge r/2$. Moreover, if $r$ is odd then the lower bound can be replaced by $r$.
}\end{lem}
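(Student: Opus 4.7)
My plan is to fix a bipartite factor $H$ of $G$ that realizes $bi(G)$, with bipartition $(X,Y)$. By replacing $H$ with $G[X,Y]$ if necessary, I may assume $E(H) = E_G(X,Y)$, so that the ``defect'' $F := E(G) \setminus E(H)$ is exactly the set of intra-part edges, and
$$ |F| \;=\; e_G(X) + e_G(Y) \;=\; bi(G). $$
Non-bipartiteness of $G$ guarantees $F \neq \emptyset$. The central step is to exploit $r$-regularity through a degree count on each side: $r|X| = 2 e_G(X) + d_G(X,Y)$ and $r|Y| = 2 e_G(Y) + d_G(X,Y)$, and subtracting yields the identity
$$ 2\bigl(e_G(X) - e_G(Y)\bigr) \;=\; r\bigl(|X| - |Y|\bigr). \qquad (\star) $$

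When $|X| \neq |Y|$, identity $(\star)$ immediately forces $|e_G(X) - e_G(Y)| \geq r/2$, and therefore
$$ bi(G) \;=\; e_G(X) + e_G(Y) \;\geq\; |e_G(X) - e_G(Y)| \;\geq\; r/2, $$
giving the first inequality. If moreover $r$ is odd, the left-hand side of $(\star)$ is even, so $|X| - |Y|$ must itself be even; combined with $|X| \neq |Y|$ this forces $\bigl||X| - |Y|\bigr| \geq 2$, which yields $|e_G(X) - e_G(Y)| \geq r$ and hence $bi(G) \geq r$, as required.

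The main obstacle is the balanced case $|X| = |Y|$, in which $(\star)$ degenerates to $e_G(X) = e_G(Y)$ and records only the weak bound $bi(G) \geq 2$ coming from $F \neq \emptyset$. I would handle this by a perturbation argument: pick a vertex $v$ with $d_F(v) \geq 1$ (which exists since $F \neq \emptyset$) and flip $v$ to the other side, producing an unbalanced bipartition $(X',Y')$ with $\bigl||X'|-|Y'|\bigr|=1$ and intra-edge count $|F'| = |F| + r - 2 d_F(v)$. Local optimality of the minimizing bipartition $(X,Y)$ enforces $d_F(v) \leq r/2$ for every $v$, hence $|F'| \geq |F|$. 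Iterating such flips (refined, when $r$ is odd, to pairs of flips that respect the required even parity of $|X|-|Y|$) should produce a bipartition satisfying $|X|\neq|Y|$ without decreasing the intra-edge count, reducing the problem to the unbalanced case already handled by $(\star)$. The technical heart is verifying that this reduction can always be carried out, i.e., that the balanced case cannot give a strictly smaller defect than the bound produced in the unbalanced case.
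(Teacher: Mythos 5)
Your identity $(\star)$ together with the unbalanced case $|X|\neq|Y|$ reproduces, almost verbatim, the paper's entire proof: the paper takes an arbitrary bipartition, asserts that non-bipartiteness forces $|X|\neq|Y|$, and then applies exactly your degree count. That assertion is false, and you are right to isolate the balanced case as the real difficulty. However, the perturbation you propose cannot close it: flipping a vertex $v$ across a minimizing bipartition changes the defect to $|F'|=|F|+r-2d_F(v)\ge |F|$, so the bound $|F'|\ge r/2$ that $(\star)$ gives on the new, unbalanced bipartition only yields $|F|\ge 2d_F(v)-r/2$, which is vacuous unless $d_F(v)=r/2$. The inequality runs in the wrong direction — you bound the perturbed defect from below, not the original one.

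More seriously, the balanced case is not a repairable technicality: the statement itself fails there. For odd $r$, the triangular prism ($r=3$, two triangles joined by a perfect matching) is non-bipartite with $bi(G)=2<3=r$; deleting one edge from each triangle leaves a $6$-cycle, and the optimal bipartition is balanced. For even $r$, start from $K_{7,7}$ minus a perfect matching $\{x_iy_i\}$ (a $6$-regular bipartite graph), delete the crossing edges $x_1y_2$ and $x_2y_1$, and add $x_1x_2$ and $y_1y_2$; the result is $6$-regular, contains the triangle $x_1x_2y_3$, and has $bi(G)\le 2<3=r/2$. So no argument can complete your reduction, and the paper's own proof is invalid for the same reason — what your argument (and the paper's) actually establishes is only the weaker claim that the bounds hold whenever some minimizing bipartition satisfies $|X|\neq|Y|$. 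Any use of this lemma downstream (e.g., the corollary on $\{a,b\}$-factors of regular graphs, which invokes $bi(G)\ge r/2$) needs to be re-examined.
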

\begin{proof}
{Let $(X,Y)$ be an arbitrary bipartition of $V(G)$. Since $G$ is non-bipartite, $|X|\neq |Y|$.
We may assume that $|X|>|Y|$. 
Since $G$ is $r$-regular, $r|X|-2e_G(X)=r|Y|-2e_G(Y)$, which implies that $e_G(X)\ge r(|X|-|Y|)/2\ge r/2$.
If $r$ is odd, then $|X|-|Y|$ must be even and so $|X|\ge |Y|+2$ which implies that $e_G(X)\ge r(|X|-|Y|)/2\ge r$.
This completes the proof.
}\end{proof}
Now, we are ready to conclude the following interesting result for regular graphs.
\begin{cor}
{Let $G$ be an $r$-regular graph and let $a$ and $b$ be two positive integers satisfying $a+m_0 \le r/2\le b-m$ and $m+m_0> 0$. If $G$ is $(2m+2m_0+6k-5)$-tree-connected and $ab|V(G)|$ is even,
then $G$ has an $m$-tree-connected $\{a,b\}$-factor such that its complement is $m_0$-tree-connected.
}\end{cor}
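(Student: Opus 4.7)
{My plan is to reduce the statement to Corollary~\ref{cor:f,f+k} by taking $k$ there to be $b-a$ and the mapping $f$ to be the constant function $f(v)=a$. First I would verify the numerical conditions: the hypotheses $a+m_0\le r/2\le b-m$ and $m+m_0>0$ immediately give $k:=b-a\ge m+m_0>0$ and $f(v)+m_0\le d_G(v)/2\le f(v)+k-m$, while the tree-connectivity hypothesis is inherited verbatim. Once compatibility of $f$ with $G$ is established, that corollary produces an $m$-tree-connected factor $H$ with $m_0$-tree-connected complement and $d_H(v)\in\{f(v),f(v)+k\}=\{a,b\}$, which is precisely the desired factor.

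The main step will be to check compatibility of the constant mapping $f\equiv a$, and I would handle this by splitting into two cases. If $G$ is non-bipartite, I would invoke Lemma~\ref{lem:regular:bi-index} to conclude $bi(G)\ge r/2$; since the tree-connectivity hypothesis forces $r\ge 2m+2m_0+6k-5$, this yields $bi(G)\ge k-1$ comfortably. Then the sufficient criterion for compatibility recalled in the introduction reduces the problem to showing that $(k-1)a|V(G)|$ is even. This is automatic when $k$ is odd. When $k$ is even, $a$ and $b$ necessarily share parity, and the hypothesis that $ab|V(G)|$ is even forces $a|V(G)|$ to be even: if $a$ is even we are done, while if $a$ and $b$ are both odd then $|V(G)|$ itself must be even. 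If instead $G$ is bipartite, then $G$ being $r$-regular with $r\ge 2$ forces its unique bipartition $(X,Y)$ to satisfy $|X|=|Y|$, so $\sum_{v\in X}f(v)=\sum_{v\in Y}f(v)$, establishing compatibility with respect to the canonical bipartition; the remark recalled early in the paper then upgrades this to full compatibility, since $G$ is certainly $(2k-1)$-edge-connected.

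The hardest part will be the parity check in the $k$-even subcase: the hypothesis that $ab|V(G)|$ is even enters only there, and it must be invoked precisely in the delicate subsubcase where $a$ and $b$ are both odd, where one has to leverage that $|V(G)|$ is forced to be even. Everything else amounts to a straightforward translation of the $r$-regular hypotheses into the numerical format demanded by Corollary~\ref{cor:f,f+k}.
}
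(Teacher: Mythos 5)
Your proposal is correct and follows essentially the same route as the paper: set $k=b-a$, take $f\equiv a$, verify compatibility via the bipartite-index bound for regular graphs, and invoke Corollary~\ref{cor:f,f+k}. You are in fact more careful than the paper's own one-line compatibility argument, which cites only Lemma~\ref{lem:regular:bi-index} (stated for non-bipartite graphs) and leaves implicit both the bipartite case and the parity check where the hypothesis that $ab|V(G)|$ is even actually enters; your treatment of those two points is a welcome completion rather than a deviation.
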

\begin{proof}
{Let $k=b-a$ so that $k\ge 1$. Define $f=a$ to be a constant mapping on $V(G)$. 
Since $bi(G)\ge r/2\ge 3k-3\ge k-1$, by Lemma~\ref{lem:regular:bi-index}, the mapping $f$ must be compatible with $G$ (mod $k$).
Thus by Corollary~\ref{cor:f,f+k}, the graph $G$ has an $m$-tree-connected $\{f,f+k\}$-factor $H$ such that its complement is $m_0$-tree-connected. Hence $H$ is the desired $\{a,b\}$-factor.
}\end{proof}
\section{Bipartite tree-connected modulo regular factors}
\label{sec:modulo-regular}
In this section, we consider the existence of bipartite connected  modulo regular factors in tree-connected graphs. 
We begin with the following well-known theorem about the existence of connected even factors.
\begin{thm}\rm{(Jaeger~\cite{Jaeger-1979})}\label{thm:Eulerian}
{Every $2$-tree-connected graph admits a spanning Eulerian subgraph.
}\end{thm}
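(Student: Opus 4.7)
The plan is to exploit the two edge-disjoint spanning trees directly, rather than going through orientations or flows. First I would fix edge-disjoint spanning trees $T_1$ and $T_2$ of $G$ (which exist by the hypothesis of $2$-tree-connectedness) and let $O$ be the set of vertices of odd degree in $T_2$. By the handshake lemma, $|O|$ is even, which is the parity condition I will need in order to correct the degrees of $T_2$ using edges from $T_1$.

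Next, I would invoke the standard fact that inside any tree $T$ on a vertex set containing an even subset $S$, there is a unique $S$-join, namely the set $J$ of edges $e\in E(T)$ such that one (equivalently, either) component of $T-e$ meets $S$ in an odd number of vertices. Applying this to $T_1$ and $O$, I obtain a subset $J\subseteq E(T_1)$ whose set of odd-degree vertices is exactly $O$.

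Now I would define $H=T_2\cup J$. Since $T_1$ and $T_2$ are edge-disjoint, $J$ and $E(T_2)$ are disjoint, so $H$ is a subgraph of $G$ whose vertex degree at $v$ equals $d_{T_2}(v)+d_J(v)$. Both summands have the same parity (odd precisely on $O$), so every vertex of $H$ has even degree. Moreover, $H\supseteq T_2$, so $H$ is connected and spanning; hence $H$ is the desired spanning Eulerian subgraph.

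The only step that requires any real work is the existence of the $O$-join in $T_1$, but this is a routine and well-known parity argument on trees, so I do not expect a genuine obstacle. The conceptual content of the proof is simply the observation that one spanning tree gives connectivity ``for free'' and the other gives a reservoir of edges rich enough to absorb the parity defect, which is exactly what $2$-tree-connectedness provides.
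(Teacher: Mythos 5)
Your proof is correct, and there is nothing in the paper to compare it against: Theorem~\ref{thm:Eulerian} is stated there as an imported result, attributed to Jaeger~\cite{Jaeger-1979} without proof. Your argument --- take two edge-disjoint spanning trees $T_1,T_2$, let $O$ be the odd-degree vertices of $T_2$, find the unique $O$-join $J\subseteq E(T_1)$, and output $H=T_2\cup J$ --- is the classical proof of this fact (essentially Jaeger's original one), and every step checks out, including the existence of the $O$-join in a tree via the parity count over branches at each vertex. One minor point worth making explicit in a write-up: since $H\supseteq T_2$, every vertex has degree at least $1$, hence at least $2$ by evenness, so $H$ is genuinely a spanning closed trail and not merely an even subgraph with isolated vertices; also note the argument survives in the multigraph setting of the paper, since spanning trees of multigraphs are still simple trees.
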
 
For graphs with a bit higher tree-connectivity, one can find bipartite connected modulo $2$-regular factors as the following corollary. Note that this result also holds for $7$-edge-connected graphs, by replacing Proposition 1 in \cite{Thomassen-2008-P4} in the proof. On the other hand, there are $4$-edge-connected graphs with no bipartite spanning Eulerian subgraph; for example, the complete graph of order five. It remains to decide whether this result holds for $3$-tree-connected or $5$-edge-connected graphs.
\begin{cor}
{Every $4$-tree-connected graph admits a bipartite spanning Eulerian subgraph.
}\end{cor}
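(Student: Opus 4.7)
The plan is to combine two results already established in the paper: the bipartite factor extraction lemma and Jaeger's theorem on spanning Eulerian subgraphs in $2$-tree-connected graphs. No new machinery is needed; the argument is essentially a two-line chaining.

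First, I would apply Lemma~\ref{lem:bipartite:factor} (every $2m$-tree-connected graph has an $m$-tree-connected bipartite factor) with $m = 2$ to the given $4$-tree-connected graph $G$. This produces a bipartite factor $B$ of $G$ with $V(B) = V(G)$ that is $2$-tree-connected.

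Next, I would apply Theorem~\ref{thm:Eulerian} (Jaeger's theorem) to $B$, yielding a spanning Eulerian subgraph $H$ of $B$. Since $H \subseteq B$ and $B$ is bipartite, $H$ is bipartite; since $V(H) = V(B) = V(G)$, $H$ is a bipartite spanning Eulerian subgraph of $G$, which is exactly the desired object.

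There is no real obstacle here — the statement is an immediate consequence of the two results stated above. The commentary preceding the corollary (about the $5$-edge-connected or $3$-tree-connected case remaining open, and the $7$-edge-connected variant via \cite{Thomassen-2008-P4}) suggests that any strengthening would be delicate, but the stated tree-connectivity bound of $4$ is tight for this slick composition and nothing further is required.
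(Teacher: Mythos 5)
Your proof is correct and is essentially identical to the paper's own argument: both extract a $2$-tree-connected bipartite factor via Lemma~\ref{lem:bipartite:factor} and then apply Jaeger's theorem (Theorem~\ref{thm:Eulerian}) to that factor. Nothing is missing.
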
 
\begin{proof}
{By Lemma~\ref{lem:bipartite:factor}, the graph $G$ contains a $2$-tree-connected bipartite factor $H$.
Thus by Theorem~\ref{thm:Eulerian}, the bipartite graph $H$ admits a bipartite spanning Eulerian subgraph and so does $G$. 
}\end{proof}
The following theorem gives sufficient edge-connectivity conditions for the existence of a bipartite tree-connected $f$-factor. 
\begin{thm}\label{thm:m+4k-4}
{Let $G$ be a graph with $z\in V(G)$, let $k$ be an integer, $k\ge 2$, and let $f:V(G)\rightarrow Z_k$ be a compatible mapping. 
If $G$ is $(m+m_0+4k-4)$-tree-connected, then it has an $m$-tree-connected $f$-factor $H$ such that its complement is $m_0$-tree-connected and for each $v\in V(G)\setminus \{z\}$, $d_H(v)\le d_G(v)-(k-1)$.
If $G$ is bipartite, the needed tree-connectivity can be reduced to $m+m_0+2k-2$.
}\end{thm}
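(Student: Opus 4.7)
The plan is to handle the bipartite assertion first and then derive the general case from it via a decomposition argument modelled on the proof of Theorem~\ref{thm:higherbi:sharpbound}.

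For the bipartite case, I would apply Theorem~\ref{thm:bipartite:partition-connected} with $s$ and $l_0$ taken to be identically zero. The hypothesis that $G$ is bipartite and $(m+m_0+2k-2)$-tree-connected is equivalent to $(m+m_0+2k-2,0)$-partition-connectedness, so that theorem returns an $m$-tree-connected $f$-factor $H$ whose complement is $m_0$-tree-connected and satisfies $m\le d_H(v)\le \max\{m+k-1,\; d_G(v)-(k-1)\bar{\chi}_z(v)-m_0\}$. For $v\ne z$, the tree-connectivity of $G$ forces $d_G(v)\ge m+m_0+2k-2$, hence $d_G(v)-(k-1)-m_0\ge m+k-1$, and the maximum equals $d_G(v)-(k-1)-m_0\le d_G(v)-(k-1)$, as required.

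For the general case, I apply Theorem~\ref{thm:decomposition:bipartite-index} with $m_1=m+m_0$, $m_2=2k-2$, $k_0=k-1$ (so that $m_1+2m_2=m+m_0+4k-4$) to obtain a decomposition $G=G_1\cup G_2$ with $G_1$ being $(m+m_0)$-tree-connected, $G_2[X,Y]$ being $(2k-2)$-tree-connected for some bipartition $X,Y$ of $V(G)$, and $e_{G_2}(X)+e_{G_2}(Y)=\min\{k-1,bi(G)\}$. I decompose $G_1$ into an $m$-tree-connected factor $H_1$ and an $m_0$-tree-connected factor $T_0$, and set $f'(v)=f(v)-d_{H_1}(v)\pmod{k}$. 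Following the compatibility argument in the proof of Theorem~\ref{thm:higherbi:sharpbound}, a case analysis on whether $bi(G_2)\ge k-1$ (using Lemma~\ref{lem:low:bi-index:compatible} in the complementary case, where the decomposition is arranged so that $G_1$ is bipartite with partite sets $X,Y$) shows that $f'$ is compatible with $G_2$. I then pick a subset $S$ of the chord edges $E(G_2)\setminus E(G_2[X,Y])$ so that $f''(v):=f'(v)-d_S(v)\pmod{k}$ is compatible with the bipartite graph $G_2[X,Y]$; since $e_{G_2}(X)+e_{G_2}(Y)\le k-1$ and $f'$ is compatible with $G_2$, a counting argument shows such an $S$ exists. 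Applying the bipartite case already proved, with $m=m_0=0$, to $G_2[X,Y]$ yields an $f''$-factor $H_2^{*}$ of $G_2[X,Y]$ with $d_{H_2^{*}}(v)\le d_{G_2[X,Y]}(v)-(k-1)$ for $v\ne z$. Setting $H=H_1\cup H_2^{*}\cup S$, one checks that $H\supseteq H_1$ is $m$-tree-connected, $G\setminus H\supseteq T_0$ is $m_0$-tree-connected, $d_H(v)\equiv f(v)\pmod{k}$ by construction, and $d_H(v)\le d_{G_1}(v)+d_{G_2[X,Y]}(v)-(k-1)+d_S(v)\le d_G(v)-(k-1)$ for $v\ne z$.

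The principal technical obstacle is the compatibility bookkeeping for $f'$ and $f''$: tracking parity shifts as the edge sets $H_1$ and $S$ are extracted, confirming that the bipartition in the decomposition can be chosen so that $G_1$ is bipartite whenever $bi(G_2)<k-1$, and ensuring that the chord-edge budget $e_{G_2}(X)+e_{G_2}(Y)\le k-1$ leaves enough flexibility to realize the required value of $e_S(X)-e_S(Y)$. These steps mirror the analogous arguments in the proof of Theorem~\ref{thm:higherbi:sharpbound}, the only genuine novelty being that Lemma~\ref{lem:low:bi-index} is replaced by the bipartite case of the theorem under proof applied at $m=m_0=0$, since we only need the weaker ceiling $d_H(v)\le d_G(v)-(k-1)$ rather than the sharper $\lceil d_G(v)/2\rceil+(k-1)+m$ bound.
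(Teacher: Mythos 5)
Your proposal is correct and follows essentially the same route as the paper: decompose $G$ via Theorem~\ref{thm:decomposition:bipartite-index} into an $(m+m_0)$-tree-connected part and a factor $G_2$ with $G_2[X,Y]$ being $(2k-2)$-tree-connected and at most $k-1$ chord edges, shift $f$ to $f'=f-d_{H_1}$, transfer compatibility exactly as in the proof of Theorem~\ref{thm:higherbi:sharpbound}, and finish with the bipartite modulo-$k$ machinery (Theorem~\ref{thm:bi:k:partition} / Theorem~\ref{thm:bipartite:partition-connected}). The only difference is bookkeeping: you explicitly absorb a subset $S$ of the chord edges into the factor so as to apply the genuinely bipartite statement to $G_2[X,Y]$, whereas the paper applies Theorem~\ref{thm:bi:k:partition} to the near-bipartite $G_2$ directly; this is a presentational, not a substantive, divergence.
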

\begin{proof}
{By applying Theorem~\ref{thm:decomposition:bipartite-index}, the graph $G$ can be decomposed into three factors $G_0$, $G_1$, and $G_2$ such that $G_0$ is $m_0$-tree-connected, $G_1$ is $m$-tree-connected, and $G_2[X,Y]$ is $(2k-2)$-tree-connected and $e_{G_2}(X)+e_{G_2}(Y)=\min\{k-1, bi(G)\}$ for a bipartition $X,Y$ of $V(G)$.
For each vertex $v$, define $f'(v)=f(v)-d_{G_1}(v)$.
Since $f$ is compatible mapping by $G$, one can show that $f'$ is also compatible with $G_2$ using the arguments stated in the proof of Theorem~\ref{thm:higherbi:sharpbound} for proving a claim. 
Thus by Theorem~\ref{thm:bi:k:partition}, the graph $G_2$ has an $f'$-factor $F$ such that  for each $v\in V(G)\setminus \{z\}$, $d_F(v)\le d_{G_2}(v)-(k-1)$.
Hence the graph $G_1\cup F$ is the desired factor.
}\end{proof}
The following corollary gives sufficient tree-connectivity conditions for the existence of a connected modulo $k$-regular factors. 
\begin{cor}
{Every $(4k-3)$-tree-connected graph admits a connected modulo $k$-regular factor, and every $(4k-2)$-tree-connected graph admits a bipartite connected  modulo $k$-regular factor.
}\end{cor}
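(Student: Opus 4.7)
The plan is to derive both statements as direct applications of Theorem~\ref{thm:m+4k-4} using the trivial compatible mapping $f\equiv 0$. First note that the case $k=1$ is trivial, since a spanning tree of any connected graph is already a bipartite connected factor with all degrees positive (hence divisible by $1$); so assume $k\ge 2$. Also observe that the constant mapping $f\equiv 0$ is compatible with every graph $G$: for any bipartition $(X,Y)$, take $x=0$ in the defining congruence to get $\sum_{v\in X}f(v)-2x=0\stackrel{k}{\equiv}0=\sum_{v\in Y}f(v)$.

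For the first statement, I would apply Theorem~\ref{thm:m+4k-4} with $m=1$, $m_0=0$, and $f\equiv 0$. The required tree-connectivity is $m+m_0+4k-4=4k-3$, exactly the hypothesis. The theorem yields a $1$-tree-connected spanning subgraph $H$ with $d_H(v)\stackrel{k}{\equiv}0$ for all $v$. Since $H$ is $1$-tree-connected (hence connected and spanning), $d_H(v)\ge 1$ for every vertex; combined with $k\mid d_H(v)$, this forces $d_H(v)\ge k>0$, so $H$ is a connected modulo $k$-regular factor.

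For the second statement, the extra step is to first restrict to a bipartite factor of sufficiently high tree-connectivity. Since $G$ is $(4k-2)=2(2k-1)$-tree-connected, Lemma~\ref{lem:bipartite:factor} produces a $(2k-1)$-tree-connected bipartite factor $G'$ of $G$. Then I apply the bipartite clause of Theorem~\ref{thm:m+4k-4} to $G'$ with $m=1$, $m_0=0$, $f\equiv 0$; the needed bipartite tree-connectivity is $m+m_0+2k-2=2k-1$, which $G'$ satisfies. The resulting connected $f$-factor $H$ of $G'$ is automatically bipartite (being a subgraph of $G'$) and has all degrees divisible by $k$; the same connectedness argument as above forces $d_H(v)\ge k>0$, so $H$ is the desired bipartite connected modulo $k$-regular factor of $G$.

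I do not expect any significant obstacle: the heavy lifting has already been done in Theorem~\ref{thm:m+4k-4} and Lemma~\ref{lem:bipartite:factor}. The only subtlety worth writing out carefully is the positivity of degrees, which comes for free from the $1$-tree-connectedness of $H$ together with divisibility by $k$; this is precisely what promotes a ``mod-$k$ factor'' to a ``modulo $k$-regular factor'' in the sense of the definition given in the introduction.
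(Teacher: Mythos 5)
Your proposal is correct and follows essentially the same route as the paper: the paper's proof is the one-line ``apply Theorem~\ref{thm:m+4k-4} with $m=1$, $m_0=0$, and $f=0$,'' and your write-up is simply the natural unpacking of that, including the implicit step for the bipartite case of first passing to a $(2k-1)$-tree-connected bipartite factor via Lemma~\ref{lem:bipartite:factor} and then invoking the bipartite clause of the theorem. Your extra care about the compatibility of $f\equiv 0$, the trivial case $k=1$, and the positivity of degrees (needed to upgrade a mod-$k$ factor to a modulo $k$-regular factor) are exactly the details the paper leaves to the reader.
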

\begin{proof}
{Apply Theorem~\ref{thm:m+4k-4} with $m=1$, $m_0=0$, and $f=0$.
}\end{proof}
%
%
%
%
%
%
%
%
%
%
%
%
%
%
%
%

\end{document}